\author*{\fnm{Takahiro} \sur{Yamada}}\email{g.yamadatakahiro@gmail.com}
\affil{\orgdiv{The Department of Philosophy and Religious Studies}, \orgname{Utrecht University}, \orgaddress{\street{Janskerkhof 13}, \city{Utrecht}, \postcode{3512 BL}, \state{Utrecht}, \country{the Netherlands}}}
\tikzstyle{small}=[fill=black, draw=black, shape=circle, inner sep=0pt, minimum size=1pt]
\tikzstyle{thick}=[-, line width=0.8pt]
\tikzstyle{semithick}=[-, line width=0.6pt]
\tikzstyle{thin}=[-, line width=0.4pt]
\tikzstyle{verythin}=[-, line width=0.2pt]
\tikzstyle{rounded corners very thin}=[-, line width=0.2pt, rounded corners=1pt]
\tikzstyle{rounded corners arrow}=[->, line width=0.2pt, rounded corners=1pt]
\newtheoremstyle{mystyle1}
  {}
  {}
  {\normalfont}
  {}
  {\bfseries}
  {}
  {10pt}
  {}
\theoremstyle{mystyle1}
\newtheorem{definition}{Definition} 
\newtheorem{principle}[definition]{Principle} 
\newtheoremstyle{mystyle2}
  {}
  {}
  {\itshape}
  {}
  {\bfseries}
  {}
  {10pt}
  {}
\theoremstyle{mystyle2}
\newtheorem{proposition}[definition]{Proposition} 
\newtheorem{corollary}[definition]{Corollary} 
\newtheorem{lemma}[definition]{Lemma} 
\newtheorem{claim}[definition]{Claim} 
\DeclareMathOperator{\PA}{\textup{\textbf{PA}}}
\DeclareMathOperator{\SF}{\textup{\textbf{SF}}}
\DeclareMathOperator{\SFQ}{\textup{\textbf{SFQ}}}
\DeclareMathOperator{\SFQp}{\textup{\textbf{SFQ}}_\textup{\textbf{P}}}
\DeclareMathOperator{\NSF}{\textup{\textbf{NSF}}}
\DeclareMathOperator{\NSFp}{\textup{\textbf{NSF}}_\textup{\textbf{P}}}
\DeclareMathOperator{\CPC}{\textup{\textbf{CPC}}}
\DeclareMathOperator{\CQC}{\textup{\textbf{CQC}}}
\DeclareMathOperator{\IPC}{\textup{\textbf{IPC}}}
\DeclareMathOperator{\IQC}{\textup{\textbf{IQC}}}
\DeclareMathOperator{\IQCE}{\textup{\textbf{IQCE}}}
\DeclareMathOperator{\HT}{\textup{\textbf{HT}}}
\DeclareMathOperator{\HTCD}{\textup{\textbf{HTCD}}}
\DeclareMathOperator{\fin}{fin}
\DeclareMathOperator{\Var}{Var}
\DeclareMathOperator{\FV}{FV}
\DeclareMathOperator{\Func}{Func}
\DeclareMathOperator{\Term}{Term}
\DeclareMathOperator{\ClTerm}{ClTerm}
\DeclareMathOperator{\Pred}{Pred}
\DeclareMathOperator{\Atom}{Atom}
\DeclareMathOperator{\ClAtom}{ClAtom}
\DeclareMathOperator{\Fm}{Form}
\DeclareMathOperator{\ClForm}{ClForm}
\DeclareMathOperator{\Cont}{Cont}
\DeclareMathOperator{\ClCont}{ClCont}
\DeclareMathOperator{\GN}{GN}
\DeclareMathOperator{\ClGN}{ClGN}
\DeclareMathOperator{\ST}{ST}
\newcommand{\wneg}{{\sim}} 
\begin{document}

\setcounter{tocdepth}{7}
\setcounter{secnumdepth}{5} 

\title{Wright's First-Order Logic of Strict Finitism}

\abstract{A classical reconstruction of Wright's first-order logic of strict finitism is presented. Strict finitism is a constructive standpoint of mathematics that is more restrictive than intuitionism. Wright sketched the semantics of said logic in \cite{Wright1982}, in his strict finitistic metatheory. Yamada \cite{Yamada2023} proposed, as its classical reconstruction, a propositional logic of strict finitism under an auxiliary condition that makes the logic correspond with intuitionistic propositional logic. In this paper, we extend the propositional logic to a first-order logic that does not assume the condition. We will provide a sound and complete pair of a Kripke-style semantics and a natural deduction system, and show that if the condition is imposed, then the logic exhibits natural extensions of Yamada \cite{Yamada2023}'s results.
}

\keywords{strict finitism, Crispin Wright, constructivism, finitism, classical reconstruction}


\def\fCenter{\ \vdash\ } 

\maketitle

\section{Introduction: Aims}\label{section: Background and aims}
We reconstruct in the classical metatheory, and explore, Wright's first-order logic of strict finitism, the ideas of which were sketched in \cite{Wright1982}. `Strict finitism' in Wright's and our sense is a constructive standpoint of mathematics that is more restrictive than intuitionism. An intuitionist accepts a statement if it is verifiable in principle; and a number if it is (mentally) constructible in principle. The strict finitist's philosophical tenet is that `in principle' should be replaced by `in practice'. Then the activity of mathematics, according to them, is based on our actual human cognitive capacities. The legitimate, `strict finitistic', numbers are those actually constructible. Their totality cannot be (classically) infinite, since we could not actually construct infinitely many objects.

We hold, after some authors\footnote{
Cf. e.g. \cite[p.113]{Wright1982}, \cite[p.147]{Tennant1997} and \cite[p.278]{Murzi2010}.
},
that the notion of possibility in principle in this context can be identified with possibility in practice with some extension. Particularly concerning verifiability, we assume the following.
\begin{principle}[Connection between intuitionism and strict finitism]\label{principle: Conceptual identity}
  A statement is verifiable in principle iff it is verifiable in practice with some finite extension of practical resources.
\end{principle}
\noindent We regard this principle as connecting intuitionism and strict finitism in one fundamental sense. Also, we assume the same for constructibility of a number.

The logic of strict finitism is meant to be the abstract system of reasoning acceptable according to strict finitism. Throughout this article, we may use the phrase `strict finitistic logic' (and similar others) to mean said logic, as well as its classical reconstruction -- just as the modern usage of the phrase `intuitionistic logic'.

Since Bernays's strict finitistic doubt cast on Brouwer in as early as 1934 \cite[p.265]{Bernays1934}, a considerable amount of research has been done\footnote{
An overview of related research until 1995 is Cardone's \cite[pp.2-3]{Cardone1995}. We note Van Bendegem's works under the name `strict finitism', e.g. \cite{VanBendegem1994} and \cite{VanBendegem2012}. But we cannot pursue the relation between his strict finitism and Wright's.
}.
Amongst the literature, Wright \cite{Wright1982} interests us the most, as it provided a systematic explanation of strict finitistic reasoning, i.e. a semantics, for the first time. His paper was a philosophical reply to Dummett \cite{Dummett1975}'s criticism which loosely targeted Yessenin-Volpin's ideas in e.g. \cite{Yessenin-Volpin1970}\footnote{
See Isles' \cite{Isles1980} for a theoretical reconstruction of Yessenin-Volpin's ideas.
}.
Wright's semantics was model-theoretic, formed in his strict finitistic metatheory. Yamada \cite{Yamada2023} in 2023 reconstructed it in the classical metatheory and presented propositional logic $\SF$ of Wrightian strict finitism, by incorporating one assumption called the `prevalence condition'\footnote{
We note that in \cite{Yamada2023}, `$\SF$' is the name of his sequent calculus.
}. 
$\SF$ exhibits a strong similarity to intuitionistic propositional logic $\IPC$. Namely, finite models of $\SF$ can be viewed as nodes in a model of the Kripke semantic of $\IPC$, and Yamada proved a result that appears to formalise the bridging principle (\ref{principle: Conceptual identity}) between intuitionism and strict finitism.

We extend $\SF$ and present $\SFQ$, strict finitistic first-order logic, in the classical metatheory without the prevalence condition. Its semantics can be viewed as a variant of the intuitionistic Kripke semantics. Just as in \cite{Yamada2023}, a model is a tree-like structure that represents all possible histories of a modelled agent's actual verification steps. The forcing conditions of quantification are kept unchanged from \cite{Wright1982} as much as possible. A sound and complete pair of the semantics and a natural deduction system is provided. We also conduct a study of $\SFQp$, $\SFQ$ with the prevalence condition imposed. In most aspects it exhibits natural extensions of the properties of $\SF$. Finite models of $\SFQp$ can be viewed as nodes in a model of the Kripke semantics of intuitionistic first-order logic $\IQC$; and we provide a quantificational version of the aforementioned formalisation result of the bridging principle (\ref{principle: Conceptual identity}).

We hope our research may corroborate Yamada \cite{Yamada2023}'s direction of classical formalisation of strict finitistic reasoning. Certainly, strict finitism has been received as an elusive standpoint. If our attempt is successful, however, strict finitistic reasoning can be understood in the classical framework, and in one sense is indeed connected to intuitionistic reasoning.

In section \ref{section: Methods: Our rendition of Wright's semantic}, we discuss how Wright defines the semantic system, what strict finitistic notions are used there, and how we classically render them in order to reconstruct the semantics. In essence, we render the totality of the strict finitistic numbers as set $\mathbb{N}$ of the classical natural numbers; and we will freely use induction on the meta-level in discussing the system of strict finitistic reasoning, as opposed to a common belief that strict finitism rejects induction. This issue is by nature philosophical. The mathematical part starts in section \ref{section: Strict finitistic first-order logic}.


\section{Methods: Our rendition of Wright's semantics}\label{section: Methods: Our rendition of Wright's semantic}

This section is to provide conceptual preparations for the mathematical part. We explain how we render Wright's notions, and note some limitations on the present article's scope. Although we do not hold that our rendition is philosophically implausible, we never claim that it is entirely innocent. Our arguments shall be loaded with conceptual assumptions which one might as well object with reasons. But, since the cogency of our attempt shall hinge on how reasonable the rendition is, it would behoove us to explicitly state in what sense and on what grounds our logic is a classical reconstruction of Wright's logic. We must leave fully philosophical discussions for another occasion\footnote{
We are planning a full philosophy article on this subject.
},
and only hope that this section contributes to explicating the conceptual assumptions of our classically-reconstructed strict finitistic logic from section \ref{section: Strict finitistic first-order logic} onwards.

In section \ref{section: The semantic system in the strict finitistic framework}, we describe Wright's system in his strict finitistic framework. Next in section \ref{section: The conceptual constraints} explained are the characteristic conceptual constraints on the objects involved. This part may serve as a better description of strict finitism. Then, in section \ref{section: Our rendition and its supports}, we state how we render the core of the constraining properties. Some philosophical principles are introduced there in order to support, or at least to explicitly state the grounds of, our rendition.

\subsection{The semantic system in the strict finitistic framework}\label{section: The semantic system in the strict finitistic framework}
$\blacksquare$ \textit{Wright's language} \quad Wright conceived of his semantic system with respect to a first-order arithmetical language. Let us assume that it is a standard one that consists of the symbols $\underline{0}$ for zero, $S$ for the successor function, $+$ for addition, $\cdot$ for multiplication and $=$ for the identity. He calls the numerals, with formation rules assumed, the `natural-number denoting expressions' (or `NDEs'\footnote{
Wright \cite[p.167]{Wright1982} used `nde' for the singular and `nde's' for the plural form.
}).
We take the NDEs to be the terms of the language, and the atomic sentences the equations between them. It may be natural for Wright to focus on arithmetic, as strict finitism is primarily concerned with numbers. In this article, while we develop a general system regarding an arbitrary language, we cannot investigate arithmetical theories.

$\blacksquare$ \textit{A frame and the forcing conditions} \quad Wright's semantics uses the notion of a `strict finitistic tree' whose nodes are called `elementary arithmetical accumulations'. We suggest imagining a tree-like frame $\langle {K, \leq \rangle}$ of the intuitionistic semantics\footnote{
We spell out the definition of an intuitionistic frame in section \ref{section: Semantics}.
}.
An accumulation is a representation of a `logically possible state of human information' and if two accumulations $k$ and $k'$ stand in the order-relation $k \leq k'$, then $k'$ is practically attainable from $k$ via information-gathering \cite[p.167]{Wright1982}. The targeted, modelled agent is thought to be a single human agent or a team of humans \cite[p.169]{Wright1982}. With each accumulation $A$, on our simplification, is associated a pair $\langle{ M_A, E_A \rangle}$. Wright calls $M_A$ an `elementary notational scheme'. It is the set of the NDEs thought to have been (practically) constructed by (i.e. up to and including) $A$. $E_A$ stands for the basic arithmetical statements that have been verified by $A$. It is a set of equations made only of the elements of $M_A$ \cite[p.167]{Wright1982}.

A strict finitistic tree $T_A^*$ for $A$ consists of the sequences $\Sigma$ of accumulations that satisfy the following.
\begin{enumerate}
  \item For any component $B$ in $\Sigma$, $M_A \subseteq M_B$ and $E_A \subseteq E_B$.
  \item $\Sigma$ has $A$ as its initial element.
  \item $\Sigma$ is well-ordered by the subset relation ($\subseteq$).
  \item For any $B$ in $\Sigma$, the immediate successor $C$ of $B$ satisfies that either $M_C \backslash M_B$ is exactly one element, or so is $E_C \backslash E_B$.
\end{enumerate}
From (ii) and (iii), we can say that $T_A^*$ is, in the modern terms, a tree with $A$ as its root. Wright defines the actual verification (i.e. forcing) conditions of a statement at each accumulation $B$ in $T_A^*$ (cf. \cite[pp.167-9]{Wright1982})\footnote{
Wright calls them the `verification-conditions' \cite[p.169]{Wright1982}. But we prefer the phrase `actual verification' in \cite[section 3]{Wright1982}.
}. The symbolism in the following is ours, including $\models$ for the forcing relation. For an atomic $P$, $B \models P$ if $P$ belongs to $E_B$, and for any $Q$ and $R$,
\begin{enumerate}
  \item $B \models Q \land R$ if $B \models Q$ and $B \models R$,
  \item $B \models Q \lor R$ if $B \models Q$ or $B \models R$,
  \item $B \models \neg Q$ if there is no $\Sigma$ that has a $C$ with $C \models Q$,
  \item $B \models Q \to R$ if for any $\Sigma$ that has $B$, and for any $C$ in $\Sigma$, if $C \models Q$, then there is a $\Sigma'$ that has $B$, $C$ and some $D$ with $D \models R$,
  \item $B \models \exists x Fx$ if there is an $n \in M_B$ such that $B \models Fn$ and
  \item $B \models \forall x Fx$ if for any NDE $n$, any $\Sigma$ that has $B$, and any $C$ in $\Sigma$, if $n \in M_C$, then there is a $\Sigma'$ that has $B$, $C$ and some $D$ with $D \models Fn$.
\end{enumerate}
See section \ref{section: Semantics} for our reconstruction.

\subsection{The conceptual constraints}\label{section: The conceptual constraints}
Every object is subject to strict finitistic restrictions, or the entire system is so designed. The wording depends on the object: any NDE standing for a number has to be `actually intelligible' (or `actually constructible'); statement `actually verifiable'; proof `surveyable'; set `manageable' etc. Also, `practically' can be used interchangeably with `actually'. However, we can assume they all mean the same kind of human feasibility and intuitive intelligibility, for their respective objects. We will describe some restrictions regarding actual constructibility, which are the properties we will render, but assume the same for all the other notions, too.

$\blacksquare$ \textit{Actual weak decidability} \quad Actual constructibility is said to be `actually weakly decidable'. Namely, for any NDE, there is a humanly feasible programme of investigation whose implementation is bound to produce at least ground for asserting either that we can actually construct it or not (cf. \cite[pp.133-5, pp.160-2, esp. p.133]{Wright1982}). Simply put, or so we suggest, if one imagines writing down an NDE, they would obtain an opinion on whether the process of writing down can be finished or not.

$\blacksquare$ \textit{The soritical conditions} \quad Wright writes about the following conditions of the so-called `sorites paradox' across the paper. We formulate in term of NDEs. 
\begin{principle}[The soritical conditions on actual constructibility]
  \quad
  \begin{enumerate}
    \item [($I$)] $\underline{0}$ is actually constructible (the initial condition).
    \item [($T$)] For any actually constructible NDEs $m$ and $n$, $S(n)$ is actually constructible; and similar for $m + n$ and $m \cdot n$ (the tolerance condition).
    \item [($D$)] For any actually constructible NDE $n$, if $n$ is $S(n')$ for some NDE $n'$, then $n'$ is actually constructible; and similar for the cases with $n$ being $n' + n''$ or $n' \cdot n''$ (the downward closure).
    \item [($X$)] There is a number for which there is no actually constructible NDE (the existence of an upper bound).\footnote{
    He did have the subject of the sorites paradox in mind: cf. \cite[p.156]{Wright1982}. ($T$) plays an important role in the discussion of a `Dedekindian account' of infinity \cite[p.130-1]{Wright1982}; and a justification of ($T$) can be found on \cite[p.165]{Wright1982}. Concerning ($X$), he says even a `complete' system does not have $10^{10^{10}}$ actually constructible numerals (cf. \cite[pp.156-7]{Wright1982}).
    }
  \end{enumerate}
\end{principle}
\noindent Particularly for ($D$), he argues that a strict finitist may prefer a `complete' (intuitively, gapless) system with a maximal expressive power: cf. \cite[pp.156-7]{Wright1982}. ($X$) suggests that the scope of discourse involves NDEs that are not actually constructible. 
As noted above, similar conditions are assumed for any other strict finitistic predicates.

`Wang's paradox', the title of \cite{Dummett1975}, refers to a variety of arguments that deduce a contradiction from these conditions. While they are not contradictory for a strict finitist -- rather fundamental --, apparently ($I$), ($T$) regarding $S$ and ($X$) are inconsistent, if the induction principle on the conceptual level is used. Thus Yessenin-Volpin (\cite[p.4]{Yessenin-Volpin1970}), Dummett (\cite[p.251]{Dummett1975}) and Wright (cf. \cite[pp.154-5, ]{Wright1982}) assume that strict finitism does not allow for induction. Accordingly, in \cite{Wright1982}, induction on the complexity of a statement is avoided, and Wright admits that he assumes with no justification that the forcing relation persists in a strict finitistic tree \cite[p.170]{Wright1982}.

Even without assuming induction, however, one might argue that a contradiction is yielded by a chain of universal instantiation of ($T$) and Modus Ponens. Dummett presented an argument regarding `apodicticity' (cf. section \ref{section: Induction in strict finitism}) in this regard. While we cannot fully address arguments to this effect here, we endorse the `neo-feasibilist' approach by Dean \cite{Dean2018} in section section \ref{section: N for the strict finitistic numbers} as one solution.

Wright calls `indefinite' the magnitude of the totality of the objects subject to the soritical conditions. For instance, a sequence $\Sigma$ (as a set) in $T_A^*$ is of indefinite length (cf. \cite[p.165, p.168]{Wright1982}); and he adds `$\Sigma$ is a humanly practicable sequence of information-gathering' as a fifth property of $\Sigma$ (cf. section \ref{section: The semantic system in the strict finitistic framework}).

In Wright's conception, therefore, there is no leaf accumulation in a strict finitistic tree. We hold that he was considering a tree containing all possible expansions of an information state. But for a general semantics, this view may be too restrictive. We endorse the view that a strict finitistic tree is one theoretical representation (i.e. model) of all possible information states of an agent; and a sequence in it is a possible history of the actual verification of the agent, and it can be `definite' in the sense that it has its last component. An agent may stop constructing in some history.

$\blacksquare$ \textit{Non-guaranteed `prevalence'} \quad Wright writes about one worry. Suppose that there is a $\Sigma$ in $T_A^*$ that has a $B$ with $B \models P$ for some atomic $P$. Here, there is no reason to assume that every sequence $\Sigma'$ has a $B'$ such that $B' \models P$. This is conceptually because verifying $P$ may involve `too much work' and might not be verified after some others. Accordingly, $P$ and $Q$ being each verified at some accumulation does not imply that both $P$ and $Q$ being verified at the same accumulation. He takes this as a threat to the `validity of conjunction-introduction' \cite[p.168]{Wright1982}\footnote{
Cf. the forcing condition of conjunction in section \ref{section: The semantic system in the strict finitistic framework}.
}.

The issue itself is interesting, but Wright's writing here is not most clear. He defines as follows in the context of a $T_A^*$.
\begin{definition}[Wright's assertibility and validity]
  A sentence $P$ is `assertible' if there is a $\Sigma$ in $T_A^*$ that has an accumulation $B$ with $B \models P$; and `valid' if for any $\Sigma$ in $T_A^*$, $\Sigma$ has a $B$ with $B \models P$. \cite[p.170]{Wright1982}
\end{definition}
\noindent `Assertibility' may translate to satisfiability. But validity thus defined is not common. We endorse reserving `validity' for being verified at every accumulation; and using the word `prevalence' for Wright's `validity', since a sentence is then verified almost everywhere. Then what is worried about is the fact that the assertibility of $P$ and that of $Q$ do not imply even the assertibility (not `validity' in his sense) of $P \land Q$. We note that surely, if $Q$ is further prevalent, then $P \land Q$ becomes assertible, assuming that $P$ persists\footnote{
We note that what he considers as `logic' in this context is of the Hilbert style; and the axiom corresponding to conjunction introduction is $P \to (Q \to (P \land Q))$ \cite[p.171]{Wright1982}. This does not precisely capture the situation he describes where $P$ and $Q$ are separately verifiable, given the forcing condition of implication in section \ref{section: The semantic system in the strict finitistic framework}. Our definitions of validity and assertibility appear in section \ref{section: Basic properties and the semantic notions}.
}.

We agree that an agent needs to be ideal in some sense to be able to verify every assertible sentence in every possible history. However, rejecting this notion altogether would be too restrictive. One or two statements might be easy enough to be prevalent. Also, it would be of technical and conceptual interest to see how the logic behaves under the `prevalence condition' that every atomic formula is prevalent. \cite{Yamada2023} incorporated it and showed that the reconstructed logic has a strong similarity to $\IPC$.

\subsection{Our rendition and some supports}\label{section: Our rendition and its supports}
Bold though it may be, we treat the totality of the numbers actually representable by an NDE as if it were set $\mathbb{N}$ of the classical natural numbers. Accordingly, the indefinite magnitude is going to be classical countable cardinality $\aleph_0$; and the length of a sequence in a strict finitistic tree, at most $\omega$. Induction on the concepts that originate in strict finitism will also be used in the reconstruction.

On the one hand, this rendition has a support in the form of formal results. Under this rendition, desirable results formally follow, if we interpret the definitions of Wright's semantic system relatively straightforwardly. The actual weak decidability in section \ref{section: The conceptual constraints} is obtained as that $\neg A \lor \neg \neg A$ is valid for any formula $A$ (lemma \ref{lemma: several famous validities}, iv). A formalisation of the bridging principle (\ref{principle: Conceptual identity}) between intuitionism and strict finitism follows, under the prevalence condition and some seemingly meaningful restriction on the models (proposition \ref{corollary: Th GC is IQC}). We submit especially the latter as an important coincidence.

On the other hand, this rendition may be supported conceptually, too, in a way of the `neo-feasibilist' approach to the sorites paradox by Dean \cite{Dean2018}.

\subsubsection{$\mathbb{N}$ for the strict finitistic numbers}\label{section: N for the strict finitistic numbers}
$\blacksquare$ \textit{The neo-feasibilist approach} \quad The core idea of neo-feasibilism is that the soritical conditions in section \ref{section: The conceptual constraints} can be consistent on Peano arithmetic $\PA$, with induction regarding the language of arithmetic. The key here is to regard the upper bound in ($X$) as unspecified. Namely, extend the language with $P$ for actual constructibility, and formalise as follows: ($I$) $P(\underline{0})$, ($T$) $\forall x (P(x) \to P(S(x)))$, ($D$) $\forall x \forall y (P(x) \land y < x \to P(y))$ and ($X$) $\exists x (P(x) \to \bot)$, where $\bot$ stands for the falsum\footnote{
The details of $<$ do not matter. One may introduce symbol $<$ with axioms, or define $x < y$ to be $\exists z (x + Sz = y)$.
}.
Then, formal induction in $\PA$ does not apply to $P$, since $P$ is additional to the original language; and a countable nonstandard model of $\PA$ can be extended to satisfy them\footnote{
See \cite[p.315]{Dean2018} for a proof. He discusses using an arithmetical theory that includes $\PA^-$, instead of $\PA$ itself, where $\PA^-$ is $\PA$ without the induction scheme.
}.
Also, since the upper bound is unspecified, the arguments from universal instantiation of ($T$) and Modus Ponens are avoided (cf. section \ref{section: The conceptual constraints}).

The structure of a countable nonstandard model is known to be order-isomorphic to $\mathbb{N} \cup (\mathbb{Z} \times \mathbb{Q})$, intuitively lined in the following way. It starts with $\mathbb{N}$; and above it, nonstandard numbers are lined just as $\mathbb{Q}$ (the rationals); and for each of them, there are adjacent nonstandard numbers on the right and on the left, ordered just as $\mathbb{Z}$ (the integers)\footnote{
See e.g. \cite[pp.74-5]{Kaye1991} for a more precise definition of the structure and a proof of this fact.
}.
In this interpretation, $P$ receives as its extension a \textit{cut}, i.e. an initial segment closed under the successor function on the structure; and the upper bound in ($X$) is interpreted as a nonstandard number outside of it. Dean endorses taking a cut that is not $\mathbb{N}$ as the extension of $P$, assuming that a predicate in the sorites paradox is vague\footnote{
Dean holds that this way, one can deal with `higher-order vagueness' \cite[pp.330-1, fn.63]{Dean2018}, and attain a better philosophical position than `epistemic ism' thanks to Tennenbaum's theorem \cite[pp.332-4.]{Dean2018}.
}.

We agree regarding the upper bound. In the context of strict finitism, the upper bound would be a number for which no NDE is actually constructible. The scope of discourse consists of the numbers which are actually constructible, and those which are not.\footnote{
One might not think that this idea suits strict finitism, a constructivist view. A number might need to be actually constructible, in order to enter the scope of discourse. This is to reject condition ($X$), and one might try to avoid Wang's paradox this way. Indeed, Mawby considered a version of strict finitism in this line of thought, `fanatical strict finitism' \citep[pp.183-95]{Mawby2005}. However, we cannot pursue further here.
}

However, we do not have strong reasons to avoid $\mathbb{N}$ for the extension of $P$. This is because a predicate of strict finitism is not vague according to Wright \cite[p.132]{Wright1982}: it is actually weakly decidable (cf. section \ref{section: The conceptual constraints}). Rather, we endorse taking $\mathbb{N}$ as the extension, since the numbers genuinely acceptable to a strict finitist do not seem to exceed classical finitude, as we describe in the following.

$\blacksquare$ \textit{Canonical NDEs as the basis} \quad We call, after Wright, `canonical' the NDEs of the form $S \cdots S(\underline{0})$ \cite[cf. p.174]{Wright1982}. We take the view that the strict finitistic numbers are based on them. A strict finitist would accept a number iff it is actually constructible (cf. section \ref{section: Background and aims}). In terms of NDEs, then, for a number to be accepted, at least one of its NDEs must actually be constructible. We suggest that this can be sharpened via Mawby's principle. In his dissertation \cite{Mawby2005} dedicated to the study of strict finitism, he proposes the following.
\begin{principle}[Mawby's criterion]
  A number is accepted iff its magnitude can intuitively be grasped.
\end{principle}
\noindent This magnitude principle comes from the idea that we recognise a mere string of symbols as standing for a number only when we can precisely grasp the magnitude of the denoted number \cite[Cf. pp.57-8, p.74.]{Mawby2005}. We hold that the magnitude of a natural number can be displayed by `how many $S$'s occur before $\underline{0}$', and endorse the following.
\begin{principle}[Displayed magnitude]
  The magnitude of a number can intuitively be grasped iff its canonical NDE is actually constructible.
\end{principle}
\noindent This way, we treat numbers primarily in terms of the canonical NDEs. An additive or multiplicative NDE $m$ stands for a number as long as it is `actually convertible', in the sense that there is a canonical NDE $c$ such that $m = c$ is actually verifiable\footnote{
However, again, we cannot pursue in this article what formal arithmetic would result in from this view.
}.

$\blacksquare$ \textit{Feasibility is finite} \quad We hold that anything denotable via a canonical NDE must, under a classical rendition, be finite. This is because any actual construction of an NDE must be feasible, and feasibility shall not exceed finitude. Any canonical NDE $S \cdots S (\underline{0})$, if actually constructible, is reachable in a feasible way: one can write down all of $\underline{0}, S(\underline{0}), ..., S \cdots S(\underline{0})$ in this order by adding $S$ to the left. In this sense, $S \cdots S (\underline{0})$ is the end product of a sequence of definite length. So we suggest its referent should not be rendered as a nonstandard number, but to an $n \in \mathbb{N}$, since for any nonstandard number, there is an infinite descending sequence of nonstandard numbers below it. 

\subsubsection{Induction in strict finitism}\label{section: Induction in strict finitism}
We endorse the induction principle in the strict finitistic framework, with the restriction on the quantification in the conclusion, to the numbers actually representable by a canonical NDE. This may fit our neo-feasibilist rendition of the totality of those numbers as $\mathbb{N}$. Surely, formal induction does not apply to $P$ (cf. section \ref{section: N for the strict finitistic numbers}). But, for all $n \in \mathbb{N}$, $P(\overbrace{S \cdots S}^{n} \underline{0})$ is true in the model, by formalised ($I$) and ($T$).

This restriction, on the one hand, nullifies the reason to suspend induction, concerning the soritical conditions in section \ref{section: The conceptual constraints}. If the conclusion only covers the actually constructible numbers, one can let the upper bound be unspecified (cf. section \ref{section: N for the strict finitistic numbers}), and take a number not actually constructible to be it.

Also, this may fit Wright's forcing condition of universal quantification in section \ref{section: The semantic system in the strict finitistic framework}. His idea is to quantify only over the actually constructible objects in the future. Then it may be reasonable to assume that the scope of quantification is thus restricted\footnote{
To be precise, our reconstruction involves `global quantification' that quantifies over all the things in the scope of discourse. We hold that Wright's forcing condition of negation induces this mode of quantification. See section \ref{section: The basic semantic definitions}, especially its remarks.
}.

On the other hand, we suggest a strict finitist might have a justification for induction with this restriction. Below, we will argue that induction is justifiable regarding the `apodictic' numbers, and then propose that every strict finitistic number is apodictic.

$\blacksquare$ \textit{Induction on apodictic numbers} \quad Dummett \cite[p.9]{Dummett2000} and Magidor \cite[p.475]{Magidor2012} describe how an intuitionist would justify induction, for an arbitrary predicate $P'$. It is intuitively that the conclusion $\forall m P'(m)$ is guaranteed by the premises $P'(0)$ and $\forall n (P'(n) \to P'(n+1))$, since for any $m$, $P'(m)$ is established by $m$ repeated applications of universal instantiation and Modus Ponens. As a constructivist, a strict finitist would have no objection, if the derivation is surveyable. But Magidor holds that they would reject induction, thinking that some $m$ may be too large.

Here, let us consider `apodicticity', Dummett's version of surveyability (cf. section \ref{section: The conceptual constraints}). A number $n$ is `apodictic' if any derivation within $n$ steps is small enough, in the sense that one can recognise it to be following the rules of inference \cite[cf. p.253]{Dummett1975}\footnote{
We would agree that apodicticity only looks at the vertical length, and the number of symbols involved should also be taken into account. However, we expect that a notion that covers all relevant aspects of a surveyable derivation shall obey the soritical conditions, and an argument to the same effect applies.
}.
This is a strict finitistic notion on numbers. Apodicticity is subject to the soritical conditions in section \ref{section: The conceptual constraints}, and the totality of the apodictic numbers is included in that of the strict finitistic numbers.

Then, with details fine-tuned, the induction principle may be justifiable to a strict finitist in the way Dummett and Magidor describe, if the conclusion is restricted to the apodictic numbers. Namely, for any predicate $P'$ and apodictic $m \, (\geq 1)$, one could establish $P'(m)$ from $P'(0)$ and $\forall n (P'(n) \to P'(n+1))$ within $m+1$ steps. Whenever $m$ is apodictic, so is $m+1$. Therefore the derivation is ex hypothesi small enough. 

$\blacksquare$ \textit{Actual constructibility implies apodicticity} \quad We claim the following.
\begin{claim}
  Every actually constructible number is apodictic.
\end{claim}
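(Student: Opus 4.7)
The plan is to argue that the cognitive feasibility required for actually constructing a number $n$ is no less than that required for surveying an $n$-step derivation, so that actual constructibility of $n$ yields apodicticity of $n$ directly. I would proceed in three steps.

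First, I would invoke the Displayed Magnitude principle: if $n$ is actually constructible, then its canonical NDE -- the string $\underline{0}$ prefixed by $n$ occurrences of $S$ -- is actually constructible. This means, concretely, that an agent can feasibly lay out the sequence of intermediate notational states $\underline{0}, S(\underline{0}), S(S(\underline{0})), \ldots$ up to the full NDE. Each pair of consecutive states differs by the insertion of exactly one $S$, so the laying-out is an $n$-long sequence of trivially recognisable single-symbol moves carried out end-to-end.

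Second, I would transfer this feasibility to derivations. A derivation within $n$ steps is, structurally, a sequence of at most $n$ applications of inference rules, each in isolation a single recognisable move. The sequence has the same vertical length as the canonical-NDE construction just shown to be feasible, so inspecting the derivation line-by-line to check conformity with the rules is a cognitive task of the same order as writing down $S^n(\underline{0})$. Hence any derivation within $n$ steps is surveyable, which is exactly the defining condition for $n$ to be apodictic.

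The main obstacle is horizontal rather than vertical complexity: a single line of a derivation may in principle carry a formula whose size dwarfs anything a strict finitist could inspect, and then the derivation would fail to be surveyable regardless of $n$. I would address this by reading `derivation within $n$ steps' as implicitly restricted to those derivations whose individual lines are themselves surveyable, which is consistent with apodicticity being about the \emph{length} of derivations rather than the size of their formulas, and with the strict finitistic setting, where non-surveyable formulas are already outside the scope of discourse. A secondary point is that the meta-argument uses a mild induction on $n$; this is exactly the mode of induction justified in the preceding subsection for conclusions restricted to apodictic numbers, and so is available here under precisely the hypothesis to be established, without circularity.
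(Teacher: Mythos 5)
There is a genuine gap, and it lies exactly where the paper takes the most care. Your central move is the transfer step: surveying an $n$-step derivation is ``a cognitive task of the same order as'' writing down $S \cdots S(\underline{0})$ with $n$ occurrences of $S$, hence feasible. In the strict finitistic framework nothing licenses such an inference: ``of the same order'' is not a closure condition that actual feasibility is assumed to satisfy, and arguments of the form ``task $B$ is only a constant factor harder than feasible task $A$, hence $B$ is feasible'' are precisely the soritical slides (chained applications of the tolerance condition) that the paper flags as problematic in section 2.2. Checking a rule application is not the same atomic act as appending one symbol, and a strict finitist is entitled to deny that feasibility survives the change of task. Your fallback -- that the needed induction on $n$ is the one ``justified in the preceding subsection for conclusions restricted to apodictic numbers'' and is ``available here under precisely the hypothesis to be established, without circularity'' -- is circular as stated: that induction principle is justified only when the numbers ranged over are already known to be apodictic, which is the very property you are trying to establish for $n$.

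The paper avoids both problems by routing the argument through Yessenin-Volpin's central ontological hypothesis (Principle \ref{principle: The central ontological hypothesis}), whose antecedent requires only that the length $n$ of the sequence be \emph{strict finitistic} (actually constructible) -- exactly the hypothesis of the claim, not the conclusion. It then builds an explicit $n$-step sequence of derivations in which the $m$-th component has length $m+2$ and concludes that $m+3$ is apodictic; the apodicticity established at each stage is what guarantees the actual constructibility of the next, slightly longer, derivation, so the c.o.h.\ delivers the constructibility of the whole sequence. The end product establishes that $n+2$ is apodictic, and the downward closure condition ($D$) then yields that $n$ is apodictic. So the bootstrapping is done inside the object-level derivations, licensed by a named principle whose hypothesis matches the given, rather than by an analogy between tasks or by the restricted induction principle. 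If you want to salvage your approach, you would need either to adopt the c.o.h.\ (at which point you essentially recover the paper's proof) or to argue independently that surveyability of single-symbol construction steps and of single inference steps are governed by one and the same feasibility predicate -- a substantive conceptual claim you have not supplied.
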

\noindent To argue for this, we appeal to Yessenin-Volpin's principle, the `central ontological hypothesis' (`c.o.h.'). In the present context, it guarantees the actual constructibility of a sequence on some conditions.
\begin{principle}[The central ontological hypothesis]\label{principle: The central ontological hypothesis}
  Let $n$ be a strict finitistic number. Suppose that a sequence with $n$ steps is defined to satisfy the following:
  \begin{enumerate}
    \item the first step is actually constructible, and
    \item for each step, if the sequence so far is actually constructible, then so is the sequence up to the next step.
  \end{enumerate}
  Then the whole sequence is actually constructible. \cite[Cf. pp.28-9]{Yessenin-Volpin1970}
\end{principle}
\noindent In other words, the end product of a sequence is guaranteed to be actually constructible on `inductive' grounds, if the length of the sequence is known to be strict finitistic\footnote{
In Yessenin-Volpin's original context, mathematics is viewed as based on eleven kinds of actions, including construction in his way \cite[p.16]{Yessenin-Volpin1970}. A sequence is essentially a prescribed process of actions, although he is not explicit. The c.o.h is introduced to guarantee the feasibility of an entire procedure. In his formulation, our items (i) and (ii) are not separated.
}.

Now, suppose $n$ is strict finitistic. Let us use $Q$ for apodicticity. First, by ($I$) and ($T$) regarding apodicticity, we have $Q(2)$. We define a sequence with $n$ steps as follows, The $0$-th component is
\begin{prooftree}
  \AxiomC{$Q(2)$}
  \AxiomC{$\forall x (Q(x) \to Q(x+1))$}
  \UnaryInfC{$Q(2) \to Q(3)$}
  \BinaryInfC{$Q(3)$.}
\end{prooftree}
The next is 
\begin{prooftree}
  \AxiomC{$Q(2)$}
  \AxiomC{$\forall x (Q(x) \to Q(x+1))$}
  \UnaryInfC{$Q(2) \to Q(3)$}
  \BinaryInfC{$Q(3)$}
  \AxiomC{$\forall x (Q(x) \to Q(x+1)$}
  \UnaryInfC{$Q(3) \to Q(4)$}
  \BinaryInfC{$Q(4)$.}
\end{prooftree}
The final is
{\small 
\begin{prooftree}
  \AxiomC{$Q(2)$}
  \AxiomC{$\forall x (Q(x) \to Q(x+1))$}
  \UnaryInfC{$Q(2) \to Q(3)$}
  \BinaryInfC{$Q(3)$}
  \AxiomC{$\forall x (Q(x) \to Q(x+1))$}
  \UnaryInfC{$Q(3) \to Q(4)$}
  \BinaryInfC{$Q(4)$}
  \noLine
  \UnaryInfC{$\vdots$}
  \noLine
  \UnaryInfC{$Q(n+2)$.}
\end{prooftree}
}
\noindent These are meant to be the semi-formal representations of the informal inferences involving informal expressions only\footnote{
We holds that a strict finitist would understand a semi-formal representation if every numeral and ellipsis is expressed by a canonical NDE; and this sequence can pass the criterion. But we do not pursue this point. Also, for a classical reconstruction of these derivations, see section \ref{section: A proof system NSF}.
}.
The $m$-th component's length is $m+2$, and its conclusion informally says that $m+3$ is apodictic.

We apply the c.o.h. (principle \ref{principle: The central ontological hypothesis}). The sequence consists of $n$ steps, and the $0$-th component is actually constructible indeed. If the $m$-th component is actually constructible, then it establishes that $m+3$ is apodictic. Therefore, the next, $m+1$-st component is actually constructible, since its length is $m+3$. Therefore by the c.o.h., the entire sequence is actually constructible, and hence the end product is actually constructible. This implies that the conclusion of the end product is actually verifiable by a derivation: i.e., that $n+2$ is apodictic. So by ($D$) concerning apodicticity, $n$ is apodictic.

\section{Strict finitistic first-order logic}\label{section: Strict finitistic first-order logic}

Upon the rendition discussed in section \ref{section: Our rendition and its supports}, we classically reconstruct Wright's semantic systems (cf. section \ref{section: The semantic system in the strict finitistic framework}). Current section \ref{section: Strict finitistic first-order logic} is the first half where we investigate its formal behaviours in general. In the latter half (section \ref{section: Theory of prevalence}), we investigate the logic's behaviours under the `prevalence property' (cf. sections \ref{section: The conceptual constraints} and \ref{section: Prevalent models}). We end this article with some reflexions in section \ref{section: Ending remarks: Further topic} on the formal results of section \ref{section: Theory of prevalence}.

In section \ref{section: Language}, we set the syntactic definitions. The important class `$\GN$' of the `global negative' formulas is introduced, which is used in most parts of this article. We provide our formal semantics in section \ref{section: Semantics}, with some remarks. A natural deduction system $\NSF$ is introduced in section \ref{section: A proof system NSF}. Section \ref{section: Completeness} is dedicated to its completeness with respect to the strict finitistic semantics. Its proof is in the Henkin-style, not radically different from the case of $\IQC$\footnote{
See e.g. \cite[pp.169-72]{vanDalen2013} and \cite[pp.87-9]{TroelstravanDalen1988}.
}. 
We utilise countably many trees isomorphic to the canonical model of $\IQC$.

\subsection{Language}
\label{section: Language}
Throughout this article, we use a fixed language $\mathcal{L}$ of \textbf{SFQ}. It is a standard language of first-order predicate logic with a distinguished predicate $E$ (the `existence predicate'). We use it to distinguish the constructed objects in the domain of discourse: see section \ref{section: Semantics}.

The language consists of the constant formulas $\top$ and $\bot$, the connectives $\land, \lor, \to, \neg, \exists$ and $\forall$, variables $v_0, v_1, ... \in \Var[\mathcal{L}]$, individual constants $a_0, a_1, ...$, function symbols $f_0, f_1, ... \in \Func[\mathcal{L}]$, predicate symbols $P_0, P_1, ... \in \Pred[\mathcal{L}]$ including unary $E$ and the parentheses. The terms (with metavariables $t, s, ...$), the atomic formulas ($P, Q, ...$) and the formulas ($A, B, ...$) together with the subterms and the subformulas etc. are defined as usual. The classes of all terms, all atomic formulas and all formula are $\Term[\mathcal{L}], \Atom[\mathcal{L}], \Fm[\mathcal{L}]$, respectively. We write $\ClTerm[\mathcal{L}]$ etc. for the classes of the closed terms etc. We abbreviate $A \to \bot$ by $\wneg A$. The parentheses may be omitted as usual, with the usual order of precedence. We define the class $\GN[\mathcal{L}]$ of the \textit{global negative} formulas ($N, N_0, N_1, ...$) by
\begin{itemize}
  \item $\GN[\mathcal{L}] ::= \bot \mid (\neg A) \mid (N \land N) \mid (N \lor N) \mid (N \to N) \mid (\forall x N) \mid (\exists x N)$,
\end{itemize}
where $A \in \Fm[\mathcal{L}]$. The class of the closed GN formulas is $\ClGN[\mathcal{L}]$. Any term $t$ which is not already bound \textit{occurs globally in $A$} if $t$ occurs in a GN subformula of $A$ 
. We note that a term can have a global and a non-global occurrence at the same time: e.g. in $P(t) \lor \neg P(t)$. The only GN formulas in the $\neg$-less fragment are those built only from $\bot$ and the connectives; no terms occur globally in them.

We will use other languages in addition to fixed $\mathcal{L}$. But when no confusion arises, we may drop `$[\mathcal{L}]$' etc. By $\mathcal{L}(D)$, we mean $\mathcal{L}$ whose constants are extended by nonempty set $D$ disjoint with $\mathcal{L}$'s constants. For any $d \in D$, $\overline{d}$ is the \textit{name} of $d$, and we let $\overline{D} = \{ \overline{d} \mid d \in D \}$ for any $D$.

\subsection{Semantics}\label{section: Semantics}
Let us state the basic definitions first, and put some remarks on them (section \ref{section: The basic semantic definitions}). Then we move to some basic properties, with the definitions of several semantic notions, such as validity, assertibility and prevalence (section \ref{section: Basic properties and the semantic notions}).

\subsubsection{The basic semantic definitions}\label{section: The basic semantic definitions}
$\blacksquare$ \textit{The definition of the semantics} \quad Since the definition of a strict finitistic model is based on that of an intuitionistic model, we state the latter in detail. An \textit{intuitionistic frame} (\textit{with respect to $\mathcal{L}$)} is a tuple $\langle K, \leq, D, J \rangle$ with the following properties. $\langle K, \leq \rangle$ is a nonempty partially ordered set. $D$ is a function that assigns to each $k \in K$ a nonempty set $D(k)$ as its `domain', such that $k \leq k'$ implies $D(k) \subseteq D(k')$. $J$ is a family $\langle \langle J_k^1, J_k^2 \rangle \rangle_{k \in K}$ of interpretation function pairs with $J_k^1: \ClTerm[\mathcal{L}(\overline{D(k)})] \to D(k)$ and $J_k^2(f): D(k)^p \to D(k)$ for each $f \in \Func$ with arity $p$ for extended languages. They satisfy the following: (i) $k \leq k'$ implies that $J_{k'}^1 (c) = J_k^1(c)$ for all $c \in \ClTerm[\mathcal{L}(\overline{D(k)})]$, and $J_k^2 (f) \subseteq J_{k'}^2 (f)$, (ii) $J_k^1 (\overline{d}) = d$ for all $k$ and $d \in D(k)$ and (iii) $J_k^1(f(c)) = (J_k^2(f)) ( J_k^1(c) )$ and similar for any other arities. An \textit{intuitionistic model $W = \langle K, \leq, D, J, v \rangle$} (\textit{with respect to $\mathcal{L}$}) is an intuitionistic frame equipped with a valuation $v$ that assigns to each $P \in \Pred$ its extension $P^{v(k)} \subseteq D^n$ at a given node $k \in K$, where $n$ is the arity of $P$, so that for any $P$ and $k, k' \in K$, if $k \leq k'$, then $P^{v(k)} \subseteq P^{v(k')}$. (Thus, $v$ is a function of the type $K \to (\Pred \to \bigcup_{i < \omega} \mathcal{P}(D^i)$.)

Throughout this article, we only consider rooted tree-like frames with at most countable branchings and height at most $\omega$. We may denote a root by $r$. Also, for brevity, we identify $\mathcal{L}(\overline{D(k)})$ with $\mathcal{L}(D(k))$ and similarly for any other cases; and function symbols and predicates are always treated as if they were unary, as long as no confusion arises.

A \textit{strict finitistic frame} is an intuitionistic frame whose $D$ is constant (`constant domain frame'). When no confusion arises, we treat $D$ simply as a nonempty set. Similarly, a pair $\langle J^1, J^2 \rangle$ of interpretation functions with $J^1: \ClTerm[\mathcal{L}(D)] \to D$ and $J^2(f): D^p \to D$ suffices as $J$, if (i) $J^1(\overline{d}) = d$ for each $d \in D$ and (ii) $J^1(f(c)) = (J^2(f)) ( J^1(c) )$. We may write $J$ for both $J^1$ and $J^2$. A \textit{strict finitistic model} is an intuitionistic model with a strict finitistic frame that satisfies the following.
\begin{itemize}
  \item The strictness condition: $\langle J(c_1), ..., J(c_n) \rangle \in P^{v(k)}$ implies $\langle J(c_i') \rangle \in  E^{v(k)}$ for any $P \in \Pred$, $k \in K$, $i \leq n$ and subterm $c_i'$ of $c_i$.
  \item The finite verification condition: $\{ P \mid P^{v(k)} \neq \emptyset \}$ is finite for all $k$.
\end{itemize}
\noindent We call the class of all the strict finitistic models (with respect to $\mathcal{L}$) $\mathcal{W}$. We note that $E^{v(k)}$ may not be closed under $J(f)$ for all $f$.

Fix one $W = \langle K, \leq, D, J, v \rangle \in \mathcal{W}$. We interpret Wright \cite{Wright1982}'s forcing conditions as follows (cf. section \ref{section: The semantic system in the strict finitistic framework}).
\begin{definition}[Actual verification relation $\models_W$]\label{definition: Actual verification relation models_W}
  Let $k \in K$. $k \models_W \top$ and $k \not \models_W \bot$. For any $c \in \ClTerm[\mathcal{L}(D)]$ and $P \in \Pred$, $k \models P(c)$ iff $\langle J(c) \rangle \in P^{v(k)}$. For any $A, B \in \ClForm[\mathcal{L}(D)]$,
  \begin{enumerate}
      \item $k \models_W A \land B$ iff $k \models_W A$ and $k \models_W B$,
      \item $k \models_W A \lor B$ iff $k \models_W A$ or $k \models_W B$,
      \item $k \models_W A \to B$ iff for any $k' \geq k$, if $k' \models_W A$, then there is a ${k'' \geq k'}$ such that $k'' \models_W B$,
      \item $k \models_W \neg A$ iff for any $l \in K$, $l \not \models_W A$,
      \item 
      \begin{enumerate}
	\item if $x$ occurs in $A$ only globally, then $k \models_W \forall x A$ iff for all $d \in D$, $k \models_W \top \to A[\overline{d}/x]$,
	\item otherwise, $k \models_W \forall x A$ iff for any $d \in D$, $k \models_W E(\overline{d}) \to A[\overline{d}/x]$,
      \end{enumerate}
      \item 
      \begin{enumerate}
	\item if $x$ occurs in $A$ only globally, then $k \models_W \exists x A$ iff there is a $d \in D$ such that $k \models_W A[\overline{d}/x]$ and
	\item otherwise, $k \models_W \exists x A$ iff there is a $d \in D$ such that $k \models_W E(\overline{d}) \land A[\overline{d}/x]$.
      \end{enumerate}
  \end{enumerate}
  For an open $A$, we let $k \models_W A$ if $k \models_W A^*$, where $A^*$ is the universal closure (bounded in the order of the free variables' indices). We may drop the subscript $_W$ when no confusion arises.
\end{definition}
\noindent We call quantification according to item (a) `global quantification'; and that according to (b) `local quantification'. We note that these modes depend only on how $x$ occurs in $A$, not in a larger context: e.g., while $\neg \exists x P(x)$ with $P(x) \in \Atom$ itself is a GN formula, the quantification is local.

$\blacksquare$ \textit{Six conceptual remarks} \quad (i) The finite verification condition means that only finitely many predicates are forced at one node. We set this in an attempt to capture that in Wright's system, the set $E_A$ of the atomic sentences actually verifiable at an accumulation $A$ is always manageable (cf. section \ref{section: The semantic system in the strict finitistic framework})\footnote{
We admit that \cite{Yamada2023}'s finite verification condition in the propositional context may be more faithful. It is that $\{ p \in \Var \mid k \in v(p) \}$ is finite, where $\Var$ is the set of the propositional variables. But we did not succeed in proving the completeness (cf. section \ref{section: Completeness}) with the condition e.g. that $P^{v(k)}$ is finite for all $P$ and $k$.
}.

(ii) The forcing condition stands for the verifiability of a statement, but it does not always reflect the modelled agent's knowledge. We suggest the following reading of $k \models A$.
\begin{enumerate}
  \item [(a)] If $A$ is atomic, then it means that the agent has actually verified that $A$ by step $k$, or that they know that $A$ at $k$.
  \item [(b)] If $A$ is complex, then it means that we are committed to that $A$ is correct in light of the agent's actual verifiability.
\end{enumerate}
This division of the perspective results mainly from the forcing condition of negation. $k \models \neg A$ shall naturally mean that $A$ is not verified anywhere: $A$ is unverifiable in practice. But we do not hold that the agent is necessarily aware at step $k$ that $A$ is practically unverifiable. The agent might be trying hard at $k$ to establish that $A$. It is the creator of the model who already knows that $A$ is not verifiable. A model is a representation of all possible information states of a modelled agent (cf. section \ref{section: The semantic system in the strict finitistic framework}), but we suggest that some formulas reflect the entire configuration of the states, which could surpass the agent's recognition.

(iii) In fact, strict finitistic negation $\neg A$ is `global' in the sense that it has a modelwide content. $k \models \neg A$ for some $k$ implies that $k \models \neg A$ for all $k$. So $\neg A$ is about the agent's verificatory capacities as a whole, not about specific facts they have established firsthand by $k$.\footnote{
In the atomic case, on the other hand, there seems to be no difficulty in attributing the knowledge to the agent. In fact, one might extend the attribution to the cases of all formulas without $\neg$, $\to$ and $\forall$, but we do not pursue this issue here.
}

(iv) Implication is in contrast forward-looking: it is intuitionistic implication with an arbitrary finite time-gap. We suggest considering the nodes $k$ of a strict finitistic frame all `within reach' or `in the foreseeable future' in the sense that the extension $P^{v(k)}$ of $P$ at $k$ is already defined. Thus $A \to B$ shall mean `$B$ comes soon after $A$'. We note that $k \models \wneg A$ has the same condition as intuitionistic `local' negation. In contrast to $\neg A$, the total unverifiability of $A$, $\wneg A$ would correspond to that `$A$ is not practically verifiable in the future of this point'.

(v) When considering quantification, Wright used the set $M_A$ etc. (cf. section \ref{section: The semantic system in the strict finitistic framework}). This is to keep track of the constructed objects, and quantify only over them. Thus, in reconstructing, we drew upon $\IQCE$, a variant of $\IQC$ first introduced by Scott \cite{Scott1977} that takes into account construction by adopting the existence predicate $E$ (cf. \cite[pp.50-6]{TroelstravanDalen1988} and \cite{BaazIemhoff2006}). $E$ indicates what is constructed in the domain of discourse: cf. the neo-feasibilist approach in section \ref{section: N for the strict finitistic numbers}. Formally, we let all nodes in a frame have a constant domain $D$, and require $E$ for the terms over which are quantified. The strictness condition also is used in $\IQCE$ \cite[p.52]{TroelstravanDalen1988}.

(vi) However, given negation's global nature, we do not require $E$ for the terms occurring in a negated formula. For example, $\neg P(a)$ ($P(a)$ atomic) can hold at any node even without $E(a)$; and $\exists x \neg P(x)$ should only mean that something in the domain of discourse is never verified to be $P$. So we defined the $\GN$ formulas and a term's `global occurrence' in section \ref{section: Language}, and set two modes of quantification, one that requires $E$, and the other that does not. Thus our quantification is the mixture of that of $\IQC$ and that of $\IQCE$: the `local' conditions are taken from \cite{BaazIemhoff2006}. We note that universal quantification also incorporates a time-gap, as $k \models T \to A$ and $k \models A$ are not equivalent. 


\subsubsection{The semantic notions and basic properties}\label{section: Basic properties and the semantic notions}
Now, let us introduce semantic notions, such as validity and assertibility (cf. section {\ref{section: The conceptual constraints}}), and see some basic properties of this semantics. We start with those of quantification.

The order of the universal closure involved above is not important. For, consider an open $A$ with $x, y \in \FV(A)$ and $x \neq y$. Then $x$ occurs in $A$ only globally iff so it does in $\forall y A$. Therefore it makes sense to speak of how a term occurs in a formula ignoring the outer quantifications. Then by induction, $k \models \forall x_1 ... \forall x_i \forall x_{i+1} ... \forall x_p A$ iff $k \models \forall x_1 ... \forall x_{i+1} \forall x_i ... \forall x_p A$, regardless of how the variables occur. (Also, the same holds for $\exists$.) 

Let $\vec{x}$ be any sequence $\langle x_1, ..., x_p \rangle$ ($1 \leq p$) of variables. We abbreviate $\forall x_1 ... \forall x_p A$ by $\forall \vec{x} A$; similarly for $\exists$. Then we can generalise forcing conditions (v) and (vi) above to $\forall \vec{x} A$ and $\exists \vec{x} A$, respectively, as follows. Let $\vec{y} = \langle x_1, ..., x_q \rangle$ (with $0 \leq q \leq p$) be the exact sequence of the variables locally quantified in $\forall \vec{x} A$ (as well as $\exists \vec{x} A$); and $\vec{z} = \langle x_{q+1}, ..., x_p \rangle$. For convenience, we will be identifying any sequence $\langle d_1, ..., d_n \rangle \in D^n$ ($n \in \mathbb{N}$) with the sequence $\langle \overline{d_1}, ..., \overline{d_n} \rangle$ of their names. Also, for any sequence $\vec{t} = \langle t_1, ..., t_n \rangle$ of terms, we write $A[\vec{t}/\vec{x}]$ for $A[t_1 / x_1, ..., t_n/x_n]$, the result of simultaneous substitution. Then, by induction,
\begin{itemize}
  \item [(v')]
  \begin{itemize}
    \item [(a)] if $q=0$, then $k \models \forall \vec{x} A$ iff for all $\vec{d'} \in D^p$, $k \models \top \to A[\vec{d'} / \vec{z}]$, and
    \item [(b)] if $1 \leq q$, then $k \models \forall \vec{x} A$ iff for all $\vec{d} = \langle d_1, ..., d_q \rangle \in D^q$ and $\vec{d'} \in D^{p-q}$, $k \models \bigwedge_{1 \leq i \leq q} E(\overline{d_i}) \to A[\vec{d} / \vec{y}, \vec{d'} / \vec{z}]$,
  \end{itemize}
  \item [(vi')]
  \begin{itemize}
    \item [(a)] if $q=0$, then $k \models \exists \vec{x} A$ iff for some $\vec{d'} \in D^p$, $k \models A[\vec{d'} / \vec{z}]$, and
    \item [(b)] if $1 \leq q$, then $k \models \exists \vec{x} A$ iff for some $\vec{d} = \langle d_1, ..., d_q \rangle \in D^q$ and $\vec{d'} \in D^{p-q}$, $k \models \bigwedge_{1 \leq i \leq q} E(\overline{d_i}) \land A[\vec{d} / \vec{y}, \vec{d'} / \vec{z}]$. 
  \end{itemize}
\end{itemize}

Now, plainly $\models$ persists. Also, $k \models QxA$ iff $k \models Qy (A [y/x])$, where $Q \in \{ \forall, \exists \}$ and $y$ does not occur in $A$\footnote{
We will generalise this fact in lemma \ref{lemma: Bound variables are always unique}.
}. 
Since by strictness, $k \models P(c)$ implies $k \models E(c')$ for all subterms $c'$ of $c$, our atomic formulas always speak of objects that `exist', `have already been constructed' or are `available to the modelled agent'. However this does not apply to complex formulas: consider e.g. $P(c) \to Q(c)$, and $\neg E(c)$. On the other hand, since vacuous quantification is considered local, $k \models \exists x P(c)$, say, implies $k \models E(\overline{d})$ for some $d$. Observe that $\neg$ and $\wneg$ differ significantly. $k \models \neg A$ implies $k \models \wneg A$, but not vice versa; and $k \models \exists x \neg P$ does not always imply $k \models \exists x \wneg P$, where $P \in \Atom$.

We define the notion of validity as usual, beside that of `assertibility' (cf. section \ref{section: The conceptual constraints}).
\begin{definition}[Validity, assertibility]\label{definition: Validity, assertibility, semantic consequence}
  $A \in \Fm[\mathcal{L}(D)]$ is \textit{valid in $W$} (notation $\models_W^V A$) if $k \models_W A$ for all $k \in K$; and \textit{assertible in $W$} (notation $\models_W^A A$) if $k \models_W A$ for some $k \in K$. For any class $\mathcal{W}' \subseteq \mathcal{W}$, we mean by $\models_{\mathcal{W'}}^V A$ that $\models_{W}^V A$ for all $W \in \mathcal{W}'$. Similar for assertibility.
\end{definition}
\noindent Plainly, $\models_W^V A$ iff $r \models_W A$; $\models_W^A A$ iff $\models_W^V \neg \neg A$; and $\not \models_W^A A$ iff $\models_W^V \neg A$. Negation's global nature is shared amongst the GN formulas (cf. section \ref{section: The basic semantic definitions}).
\begin{lemma}\label{lemma: GN's global negativity}
  For any $N \in \ClGN[\mathcal{L}(D)]$, $\models_W^A N$ implies $\models_W^V N$.
\end{lemma}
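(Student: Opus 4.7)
The plan is to proceed by structural induction on the closed GN formula $N$, after one preliminary observation that drives the whole argument. The observation is that in any GN formula $N$, every (not-already-bound) occurrence of a free variable $x$ lies inside the GN subformula $N$ itself, so $x$ occurs \emph{only globally} in $N$. Consequently, whenever $N = \forall x N'$ or $N = \exists x N'$ is GN, clause (a) of items (v)/(vi) in definition \ref{definition: Actual verification relation models_W} applies, giving
\[
  k \models \forall x N' \iff \forall d \in D,\ k \models \top \to N'[\overline{d}/x],
  \qquad
  k \models \exists x N' \iff \exists d \in D,\ k \models N'[\overline{d}/x],
\]
and the inductive hypothesis can be applied to each closed GN formula $N'[\overline{d}/x]$.

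The base cases are immediate: $\bot$ is never forced, so the implication is vacuous; and the clause for $\neg A$ is ``$\forall l \in K,\ l \not \models A$'', which is manifestly independent of the evaluating node, giving $\models^V \neg A$ at once. The cases $N_1 \land N_2$ and $N_1 \lor N_2$ follow directly from the IH applied to the GN subformulas. For $N_1 \to N_2$, suppose $k \models N_1 \to N_2$ and fix any $l$; to verify the implication clause at $l$, I take $l' \geq l$ with $l' \models N_1$, use the IH on $N_1$ to obtain $k \models N_1$, use $k \models N_1 \to N_2$ at $k \geq k$ to get some node forcing $N_2$, and then use the IH on $N_2$ to get $l' \models N_2$, so $l'' := l'$ works. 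For $\forall x N'$, the simplified clause reduces $k \models \forall x N'$ to: for each $d$ and each $k' \geq k$ there is $k'' \geq k'$ with $k'' \models N'[\overline{d}/x]$; taking $k' = k$ shows each $N'[\overline{d}/x]$ is assertible, hence valid by IH, and then validity of $\forall x N'$ at any $l$ is immediate from the same clause. The case $\exists x N'$ is analogous and in fact preserves the witness $d$ used at $k$.

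The step I expect to be the real obstacle, if one dropped the GN restriction or tried to argue uniformly, is the quantifier case under the \emph{local} clause (b): in a branching frame one could have $E(\overline{d})$ realised in some branch through $l$ but in no branch through $k$, which would obstruct transferring assertibility of $N'[\overline{d}/x]$ across incomparable nodes. The opening observation that GN matrices are always quantified globally is exactly what circumvents this; after that the induction is a routine exercise drawing on the modelwide reading of $\neg$ together with persistence of $\models$.
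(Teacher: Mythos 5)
Your proof is correct and is exactly the induction the paper leaves unstated: the one non-routine point is your opening observation that the body of a GN quantifier has its variable occurring only globally (every occurrence lies in the GN body, which is a GN subformula of itself), so clause (a) of the quantifier conditions applies and the remaining cases are routine, with $\neg A$ supplying the node-independent base and the $\to$ case handled via the IH on both sides just as you do. The only caveat is vacuous quantification, which the paper elsewhere stipulates to be local; under that stipulation your blanket ``always global'' claim---and indeed the lemma itself, e.g.\ for $\exists x\,\neg P(a)$ with $x$ not occurring---would fail, so one must read ``occurs only globally'' as vacuously satisfied when the variable does not occur, which is evidently the intended reading here.
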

\begin{proof}
  Induction. 
\end{proof}

The notion of semantic consequence is defined similarly to the case of $\IQC$ (cf. \cite[pp.168]{vanDalen2013}), taking into account the two modes of quantification. For any set $\Gamma$ of formulas and any substitution $^\sigma$, we write $\Gamma^\sigma$ for $\{ B^\sigma \mid B \in \Gamma \}$. If $\Gamma$ is a finite set $\{B_1, ..., B_n\}$, we let $\bigwedge \Gamma$ be $B_1 \land \cdots \land B_n$, if $n \neq 0$; and be $\top$, if $n = 0$.
\begin{definition}[Semantic consequence]\label{definition: Semantic consequence}
  Let $\Gamma \subseteq_{\fin} \Fm[\mathcal{L}(D)]$, and $A \in \Fm[\mathcal{L}(D)]$. Suppose the free variables occurring in $\bigwedge \Gamma \land A$ not only globally form a sequence $\vec{x} = \langle x_1, ..., x_p\rangle$, and those occurring only globally form $\vec{y} = \langle y_1, ..., y_q \rangle$. $A$ is a \textit{semantic consequence of $\Gamma$ in $W$} (notation $\Gamma \models_W^V A$) if for any $k \in K$, $\vec{d} = \langle d_1, ..., d_p \rangle \in D^p$ and $\vec{d'} \in D^q$, whenever $k \models_W \bigwedge \{ E(\overline{d_i}) \mid 1 \leq i \leq p \} \land \bigwedge (\Gamma[\vec{d}/\vec{x}, \vec{d'}/\vec{y}])$, $k \models_W A[\vec{d}/\vec{x}, \vec{d'}/\vec{y}]$ holds. We use $\Gamma \models_{\mathcal{W}'}^V A$ with $\mathcal{W}' \subseteq \mathcal{W}$ the same way as $\models_{\mathcal{W}'}^V A$.
\end{definition}
\noindent We note that if only closed formulas are involved, $\Gamma \models_W^V A$ reduces to that for all $k$, $k \models \bigwedge \Gamma$ implies $k \models A$.
The following hold for closed formulas.
\begin{lemma}\label{lemma: several famous validities}
  (i) $\models_{\mathcal{W}}^V \wneg \wneg A \to A$. (ii) $\models_{\mathcal{W}}^V ((A \to B) \to A) \to A$. (iii) $\models_{\mathcal{W}}^V \wneg \wneg (A \lor \wneg A)$. (iv) $\models_{\mathcal{W}}^V \neg A \lor \neg \neg A$. (v) $\models_{\mathcal{W}}^V \forall x E(x)$. (vi) $\forall x \wneg \wneg A \models_{\mathcal{W}}^V \forall x A$. (vii) $\models_{\mathcal{W}}^V \forall x \wneg \wneg A \to \wneg \wneg \forall x A$.
\end{lemma}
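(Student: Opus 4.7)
The plan is to treat each clause separately, exploiting the two convenient reformulations that fall out of the forcing conditions for $\to$ and $\neg$: namely, $k\models\wneg X$ iff no $k'\geq k$ forces $X$, so $k\models\wneg\wneg X$ iff for every $k'\geq k$ some $k''\geq k'$ forces $X$; and $k\models\neg X$ is a condition that does not depend on $k$ at all, so $\neg X$ either holds everywhere in $W$ or nowhere. I also rely on persistence of $\models$ under $\leq$, which follows by straightforward induction on formula complexity (the $\neg$ case being global and hence trivially monotone, the $\to$, $\wneg$ and quantifier cases being monotone since one can quote the defining clause at the larger node).

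For (i), unfold $\wneg\wneg A$ at $k'\geq k$ into ``every $l\geq k'$ admits $m\geq l$ with $m\models A$'' and instantiate with $l=k'$. For (ii), fix $k'\geq k$ with $k'\models(A\to B)\to A$ and split cases: either some $k''\geq k'$ forces $A$ (done), or none does, in which case $k'\models A\to B$ holds vacuously and the hypothesis hands us the required witness, contradicting the case assumption. For (iii), fix $k'\geq k$; if some $k''\geq k'$ forces $A$, take that $k''$; otherwise no node above $k'$ forces $A$, so $k'\models\wneg A$, hence $k'\models A\lor\wneg A$. For (iv), since $\neg A$ depends only on whether $A$ is forced anywhere in $W$, the assertion $\neg A\lor\neg\neg A$ reduces to a tautology about the model. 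For (v), $E(x)$ is atomic so $x$ occurs locally; it suffices to check that $k\models E(\overline d)\to E(\overline d)$ for every $d\in D$, which is immediate.

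The main obstacle is (vi), because $\wneg\wneg A$ is always global in $x$ (being a GN formula) whereas $A$ need not be, so the forcing clauses for $\forall x \wneg\wneg A$ and $\forall x A$ use different modes. I would split on whether $x$ occurs in $A$ only globally or not. In the purely global case both quantifications use clause (a) and the argument only combines the $\wneg\wneg$-unfolding with clause (i). In the mixed case, fix $d\in D$ and $k'\geq k$ with $k'\models E(\overline d)$; the hypothesis gives $k''\geq k'$ with $k''\models\wneg\wneg A[\overline d/x]$, and applying (i) at $k''$ (to the now-closed instance $A[\overline d/x]$) supplies the required $l\geq k''\geq k'$ with $l\models A[\overline d/x]$. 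The general case with further free variables reduces to this after simultaneous substitution of names for the other variables, as licensed by the universal-closure convention in Definition~\ref{definition: Semantic consequence}.

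Finally, (vii) combines (vi) with the easy observation $\models_{\mathcal W}^V B\to\wneg\wneg B$ (given persistence, if $k'\models B$ then every $l\geq k'$ admits $l$ itself as a witness to $B$, so $k'\models\wneg\wneg B$). Concretely, fix $k'\geq k$ with $k'\models\forall x\wneg\wneg A$; by (vi), $k'\models\forall x A$; by the preceding remark, $k'\models\wneg\wneg\forall x A$; take $k''=k'$.
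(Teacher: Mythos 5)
Your proof is correct and is exactly the elaboration of what the paper dismisses as ``Easy'' (with (vii) derived from (vi) just as the paper indicates): everything reduces to the two unfoldings you isolate, namely that $k\models\wneg\wneg X$ iff every $k'\geq k$ has some $k''\geq k'$ forcing $X$, and that $\neg X$ is node-independent. One side remark in your treatment of (vi) is factually wrong, though harmlessly so: $\wneg\wneg A$ is \emph{not} always a $\GN$ formula (by the grammar, $A\to\bot$ is in $\GN$ only when $A$ is), and in fact $x$ occurs only globally in $\wneg\wneg A$ iff it does so in $A$, so the two universal quantifiers use the \emph{same} mode; your argument survives because in the mixed case you only invoke the hypothesis at nodes where $E(\overline d)$ is already forced, which is precisely what the (correct) local clause supplies.
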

\begin{proof}
  (i-vi) Easy. (vii) Follows from (vi). 
\end{proof}
\noindent We note that in particular, (iv) and (vii) fail in intuitionistic logic. We submit (iv) as a formalisation of the actual weak decidability (cf. sections \ref{section: The conceptual constraints}, \ref{section: Our rendition and its supports}). Also, (vii) would amount to the `double-negation shift' for intuitionistic negation.

On the other hand, the law of excluded middle and Modus Ponens fail.
\begin{lemma}
  (i) $\not \models_{\mathcal{W}}^V A \lor \wneg A$. (ii) $\not \models_{\mathcal{W}}^V A \lor \neg A$. (iii) $\models_{\mathcal{W}}^V B \to A$ and $\models_{\mathcal{W}}^V B$ do not imply $\models_{\mathcal{W}}^V A$.
\end{lemma}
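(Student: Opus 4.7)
All three failures can be demonstrated by a single minimal countermodel. I would fix $W = \langle K, \leq, D, J, v\rangle \in \mathcal{W}$ with $K = \{r, k\}$ ordered by $r < k$, a one-element domain $D = \{d\}$, and a single unary predicate $P$ (besides $E$) with $P^{v(r)} = \emptyset$, $P^{v(k)} = \{d\}$. To satisfy the strictness condition I take $E^{v(r)} = \emptyset$, $E^{v(k)} = \{d\}$; the finite verification condition is immediate. The constant $\overline{d}$ denotes $d$ via $J$, and $J$ on any function symbols can be chosen arbitrarily (no function symbols are needed in the test formulas).

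For (i), take $A = P(\overline{d})$. By construction $r \not\models_W P(\overline{d})$. Unfolding the clause for $\wneg P(\overline{d}) = P(\overline{d}) \to \bot$ at $r$: it would require that no node $k' \geq r$ forces $P(\overline{d})$, since no node forces $\bot$; but $k \models_W P(\overline{d})$, so $r \not\models_W \wneg P(\overline{d})$. Hence $r$ refutes the disjunction, so $\not\models_W^V P(\overline{d}) \lor \wneg P(\overline{d})$. For (ii), the same $W$ and $A$ work: the global clause for negation gives $l \models_W \neg P(\overline{d})$ iff no node at all forces $P(\overline{d})$, which fails because $k \models_W P(\overline{d})$. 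Thus $r \not\models_W P(\overline{d}) \lor \neg P(\overline{d})$.

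For (iii), retain the same model and set $B = \top$, $A = P(\overline{d})$. Clearly $\models_W^V \top$. For validity of $\top \to P(\overline{d})$ one checks every node $k' \in K$: at $k' = r$ the witness $k'' = k \geq r$ forces $P(\overline{d})$, and at $k' = k$ the witness $k'' = k$ itself does; so $\models_W^V \top \to P(\overline{d})$. But $r \not\models_W P(\overline{d})$, hence $\not\models_W^V P(\overline{d})$, which establishes (iii).

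I do not anticipate any real obstacle here: the argument is a straightforward verification of forcing clauses on a two-node frame. The only point that deserves care is to keep separate the two negation clauses — local $\wneg$, which says ``no successor of the current node forces $A$'', versus global $\neg$, which says ``no node of the model at all forces $A$'' — and to exploit the time-gap in the implication clause for (iii), which is precisely what allows $\top \to P(\overline{d})$ to hold everywhere while $P(\overline{d})$ itself is not forced at the root.
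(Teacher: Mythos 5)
Your proposal is correct and is essentially the paper's own argument: a two-node model with a single atom forced only at the top node, refuting both disjunctions at the root and witnessing the failure of Modus Ponens via the time-gap in $\to$. The only cosmetic differences are that the paper uses $E(c)$ itself as the atom $A$ and $B = E(c) \to E(c)$ in place of your $B = \top$.
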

\begin{proof}
  (i-iii) Let $W \in \mathcal{W}$ comprise only two nodes, $r$ and $k$ with $r < k$, and $k \models_W E(c)$ be the only forcing relation for atoms. Let $A$ be $E(c)$, and $B$ be $E(c) \to E(c)$, and look at $r$.
\end{proof}
\noindent But the premises of (iii) do imply $\models_{\mathcal{W}}^V \wneg \wneg A$: the following holds in general.
\begin{lemma}
  $(B \to A) \land B \models_{\mathcal{W}}^V \wneg \wneg A$.
\end{lemma}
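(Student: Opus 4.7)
The plan is to unfold the definition of semantic consequence and then work directly with the forcing clauses for $\to$ and $\bot$. Since $\wneg$ is defined as implication to $\bot$, and no node forces $\bot$, the statement $k \models \wneg\wneg A$ collapses to the purely semantic assertion that for every $k' \geq k$ there exists some $k'' \geq k'$ with $k'' \models A$. But this is exactly what clause (iii) for $\to$ gives us from the hypotheses $k \models B \to A$ and $k \models B$ (together with persistence).

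First I would fix an arbitrary $W = \langle K, \leq, D, J, v \rangle \in \mathcal{W}$, a node $k \in K$, and tuples $\vec{d} \in D^p$, $\vec{d'} \in D^q$ as required by Definition \ref{definition: Semantic consequence}, and write $\sigma$ for the substitution $[\vec{d}/\vec{x}, \vec{d'}/\vec{y}]$. Assuming $k \models \bigwedge_i E(\overline{d_i}) \land ((B \to A) \land B)^\sigma$, I extract $k \models B^\sigma \to A^\sigma$ and $k \models B^\sigma$. Then for any $k' \geq k$, persistence of $\models$ (already noted just after the generalisation of clauses (v$'$) and (vi$'$)) gives $k' \models B^\sigma$, and clause (iii) applied to $k \models B^\sigma \to A^\sigma$ yields some $k'' \geq k'$ with $k'' \models A^\sigma$.

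Next I would analyse $k \models (\wneg\wneg A)^\sigma$, i.e. $k \models ((A^\sigma \to \bot) \to \bot)$. Unfolding clause (iii) twice and using that no node forces $\bot$, this condition is equivalent to: for every $k' \geq k$ there exists $k'' \geq k'$ with $k'' \models A^\sigma$. This is exactly the witness produced in the previous paragraph, so the proof closes.

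I do not foresee any substantial obstacle. The only point requiring a little care is the bookkeeping around free variables: one must observe that the variables of $A$ occurring only globally in $(B \to A) \land B \land \wneg\wneg A$ are handled by the global mode and so do not require $E(\overline{d'_j})$, whereas those also occurring non-globally are supplied with the assumption $\bigwedge_i E(\overline{d_i})$. With this in hand, the argument given above goes through uniformly, and the result follows.
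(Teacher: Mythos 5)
Your proposal is correct and is essentially the argument the paper intends: the paper's proof is simply ``Easy,'' and the key equivalence you isolate ($k \models \wneg\wneg A$ iff for every $k' \geq k$ there is a $k'' \geq k'$ with $k'' \models A$) is exactly the one the paper itself invokes later in the soundness proof for the $\to$ rules. The appeal to persistence of $\models$ and the free-variable bookkeeping are both handled as the paper's definitions require.
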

\begin{proof}
  Easy.
\end{proof}
\noindent This motivates us to use the notion of `stability': $A$ is \textit{stable in $W$} if $\wneg \wneg A \models_W^V A$. Modus Ponens holds, if $A$ is stable in all $W \in \mathcal{W}$. We can provide a class with stability. Define class $\ST[\mathcal{L}]$ by
\begin{itemize}
  \item $\ST[\mathcal{L}] ::= \top \mid N \mid (S \land S) \mid (S \lor N) \mid (N \lor S) \mid (A \to A) \mid (\forall x A)$,
\end{itemize}
\indent where $N \in \GN[\mathcal{L}]$ and $A \in \Fm[\mathcal{L}]$.
\begin{lemma}
  Any $\ST$ formula is stable in all $W \in \mathcal{W}$.
\end{lemma}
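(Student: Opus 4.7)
The plan is to prove this by induction on the grammar defining $\ST$, fixing $W \in \mathcal{W}$ and $k \in K$ and showing $k \models \wneg \wneg A$ implies $k \models A$ in each case. For formulas with free variables, the statement reduces to the closed case via the semantic-consequence definition by substituting names $\overline{d}$ from $D$ (the $E$-antecedents do not interfere with $\wneg \wneg$, which already persists).

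The easy cases are $\top$ (trivial) and $A = N \in \GN$: unpacking $\wneg \wneg N$ with the fact that $\bot$ is never forced shows that $N$ is assertible in $W$, so lemma \ref{lemma: GN's global negativity} upgrades this to validity, giving $k \models N$. For $A = S_1 \land S_2$, one observes that $\wneg \wneg(S_1 \land S_2)$ entails $\wneg \wneg S_i$ for each $i$ (if $S_1 \land S_2$ is forced at some future node, so is each conjunct), after which the induction hypothesis applies to each $S_i$ separately.

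For the disjunctive case $A = S \lor N$ (and symmetrically $N \lor S$), I split on whether $N$ is assertible in $W$. If it is, lemma \ref{lemma: GN's global negativity} makes $N$ valid, so $k \models N$ and hence $k \models S \lor N$. If $N$ is nowhere forced, then every witness $k'' \geq k'$ produced by $\wneg \wneg(S \lor N)$ must actually force $S$, so $k \models \wneg \wneg S$, and the induction hypothesis on $S$ finishes the case. For $A = B \to C$, given any $k' \geq k$ with $k' \models B$, I use persistence of $\wneg \wneg(B \to C)$ to locate $k'' \geq k'$ with $k'' \models B \to C$; persistence of $B$ gives $k'' \models B$, and the forcing clause of $\to$ applied at $k''$ itself yields a $k''' \geq k''$ with $k''' \models C$, as required. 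For $A = \forall x B$, I verify the relevant subclause of definition \ref{definition: Actual verification relation models_W} (case (a) or (b) depending on whether $x$ occurs only globally): for any $d \in D$ and $k' \geq k$ satisfying the antecedent ($\top$ or $E(\overline{d})$), persistence of $\wneg \wneg \forall x B$ yields $k'' \geq k'$ forcing $\forall x B$, and then the built-in forward-looking clause of the inner implication $\top \to B[\overline{d}/x]$ or $E(\overline{d}) \to B[\overline{d}/x]$ supplies some $k''' \geq k''$ with $k''' \models B[\overline{d}/x]$.

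The main obstacle is the disjunction case, because $\ST$ only requires one disjunct to be stable while the other is merely GN; the argument hinges on lemma \ref{lemma: GN's global negativity} to dispose of the non-stable disjunct cleanly by an all-or-nothing dichotomy. The $\to$ and $\forall$ cases look the most delicate but turn out to be essentially free: the forward-looking jump $k' \mapsto k'' \geq k'$ that is peculiar to this strict finitistic semantics (and is what distinguishes $\to$ and $\forall$ from their intuitionistic counterparts) is precisely what absorbs the extra $\wneg \wneg$, so no recursive appeal to stability of $B$ or $C$ is needed.
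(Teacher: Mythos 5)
Your proof is correct and follows the same route as the paper, which simply says ``Induction'' for this lemma: a structural induction on the $\ST$ grammar, using lemma \ref{lemma: GN's global negativity} to handle the $\GN$ disjunct and the reformulation of $k \models \wneg\wneg B$ as ``for all $k' \geq k$ there is $k'' \geq k'$ forcing $B$'' to absorb the double negation in the $\to$ and $\forall$ cases. All case analyses, including the dichotomy on assertibility of $N$ in the disjunction case and the observation that $\to$ and $\forall$ need no inductive hypothesis, check out.
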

\begin{proof}
  Induction.
\end{proof}
\noindent So, in fact, Modus Ponens significantly widely applies\footnote{
In section \ref{section: A proof system NSF}, we suggest that the failure of Modus Ponens would have no effect on our argument for induction in section \ref{section: Induction in strict finitism}.
}.

\subsection{A natural deduction system \textbf{NSF}}\label{section: A proof system NSF}
\textbf{NSF} has ($\land$ I), ($\land$ E), ($\lor$ I) and ($\lor$ E) of classical logic. Let in what follows $t, t_1, ..., t_n \in \Term$, $f \in \Func$, $P \in \Pred$, $1 \leq i \leq n$, $S \in \ST$ and $N \in \GN$.

\begin{multicols}{2}
  \begin{prooftree}
    \AxiomC{ \, }
    \RightLabel{$\top$ I}
    \UnaryInfC{$\top$}
  \end{prooftree}
  \begin{prooftree}
    \AxiomC{$\bot$}
    \RightLabel{$\bot$ E}
    \UnaryInfC{$A$}
  \end{prooftree}
\end{multicols}

\begin{multicols}{3}
  \begin{prooftree}
    \AxiomC{$P(t_1, ..., t_n)$}
    \RightLabel{STR$_1$}
    \UnaryInfC{$E(t_i)$}
  \end{prooftree}
  \begin{prooftree}
    \AxiomC{$E(f(t))$}
    \RightLabel{STR$_2$}
    \UnaryInfC{$E(t)$}
  \end{prooftree}
  \begin{prooftree}
    \AxiomC{$\wneg \wneg S$}
    \RightLabel{ST}
    \UnaryInfC{$S$}
  \end{prooftree}
\end{multicols}

\begin{multicols}{2}
  \begin{prooftree}
    \AxiomC{$[A]$}
    \noLine
    \UnaryInfC{$\vdots$}
    \noLine
    \UnaryInfC{$\wneg \wneg B$}
    \RightLabel{$\to$ I}
    \UnaryInfC{$A \to B$}
  \end{prooftree}
  \begin{prooftree}
    \AxiomC{$ \, $}
    \noLine
    \UnaryInfC{$ \, $}
    \noLine
    \UnaryInfC{$ \, $}
    \noLine
    \UnaryInfC{$ \, $}
    \noLine
    \UnaryInfC{$ \, $}
    \noLine
    \UnaryInfC{$ \, $}
    \noLine
    \UnaryInfC{$ \, $}
    \noLine
    \UnaryInfC{$A \to B$}
    \AxiomC{$A$}
    \RightLabel{$\to$ E}
    \BinaryInfC{$\wneg \wneg B$}
  \end{prooftree}
\end{multicols}

\begin{multicols}{2}
  \begin{prooftree}
    \AxiomC{$ \, $}
    \noLine
    \UnaryInfC{$ \, $}
    \noLine
    \UnaryInfC{$ \, $}
    \noLine
    \UnaryInfC{$ \, $}
    \noLine
    \UnaryInfC{$ \, $}
    \noLine
    \UnaryInfC{$ \, $}
    \noLine
    \UnaryInfC{$ \, $}
    \noLine
    \UnaryInfC{$\wneg N$}
    \RightLabel{$\neg$ I$_1$}
    \UnaryInfC{$\neg N$}
  \end{prooftree}
  \begin{prooftree}
    \AxiomC{$\Gamma \subseteq_{\fin} \GN$}
    \noLine
    \UnaryInfC{$\vdots$}
    \noLine
    \UnaryInfC{$\wneg A$}
    \RightLabel{$\neg$ I$_2$}
    \UnaryInfC{$\neg A$}
  \end{prooftree}
\end{multicols}

\begin{multicols}{2}
  \begin{prooftree}
    \AxiomC{$\wneg A$}
    \AxiomC{$A$}
    \RightLabel{$\to \bot$ E}
    \BinaryInfC{$\bot$}
  \end{prooftree}
  \begin{prooftree}
    \AxiomC{$\neg A$}
    \AxiomC{$A$}
    \RightLabel{$\neg$ E}
    \BinaryInfC{$\bot$}
  \end{prooftree}
\end{multicols}

\begin{multicols}{2}
  \begin{prooftree}
    \AxiomC{$ \, $}
    \noLine
    \UnaryInfC{$ \, $}
    \noLine
    \UnaryInfC{$ \, $}
    \noLine
    \UnaryInfC{$ \, $}
    \noLine
    \UnaryInfC{$ \, $}
    \noLine
    \UnaryInfC{$ \, $}
    \noLine
    \UnaryInfC{$ \, $}
    \noLine
    \UnaryInfC{$\wneg \wneg A(x)$}
    \RightLabel{$\forall$-glo I}
    \UnaryInfC{$\forall x A$}
  \end{prooftree}
  \begin{prooftree}
    \AxiomC{$[E(x)]$}
    \noLine
    \UnaryInfC{$\vdots$}
    \noLine
    \UnaryInfC{$\wneg \wneg A(x)$}
    \RightLabel{$\forall$-loc I}
    \UnaryInfC{$\forall x A$}
  \end{prooftree}
\end{multicols}

\begin{multicols}{2}
  \begin{prooftree}
    \AxiomC{$\forall x A$}
    \RightLabel{$\forall$-glo E}
    \UnaryInfC{$\wneg \wneg A[t/x]$}
  \end{prooftree}
  \begin{prooftree}
    \AxiomC{$\forall x A$}
    \AxiomC{$E(t)$}
    \RightLabel{$\forall$-loc E}
    \BinaryInfC{$\wneg \wneg A[t/x]$}
  \end{prooftree}
\end{multicols}

\begin{multicols}{2}
  \begin{prooftree}
    \AxiomC{$A[t/x]$}
    \RightLabel{$\exists$-glo I}
    \UnaryInfC{$\exists x A$}
  \end{prooftree}
  \begin{prooftree}
    \AxiomC{$A[t/x]$}
    \AxiomC{$E(t)$}
    \RightLabel{$\exists$-loc I}
    \BinaryInfC{$\exists x A$}
  \end{prooftree}
\end{multicols}

\begin{multicols}{2}
  \begin{prooftree}
    \AxiomC{$\exists y A$}
    \AxiomC{$[A[x/y]]$}
    \noLine
    \UnaryInfC{$\vdots$}
    \noLine
    \UnaryInfC{$C$}
    \RightLabel{$\exists$-glo E}
    \BinaryInfC{$C$}
  \end{prooftree}
  \begin{prooftree}
    \AxiomC{$\exists y A$}
    \AxiomC{$[A[x/y]] \, [E(x)]$}
    \noLine
    \UnaryInfC{$\vdots$}
    \noLine
    \UnaryInfC{$C$}
    \RightLabel{$\exists$-loc E}
    \BinaryInfC{$C$}
  \end{prooftree}
\end{multicols}
\noindent ($\neg$ I$_2$) is applicable if the derivation of $\wneg A$ depends only on a (possibly empty) finite set $\Gamma$ of $\GN$ formulas as open hypotheses. We note that it is not applicable, if the derivation depends on $\wneg A$ itself (i.e. $\wneg A \in \Gamma$) and $A \notin \GN$. (If $\wneg A \in \Gamma$ and $A \in \GN$, use ($\neg$ I$_1$) to conclude $\neg A$.) The `glo' rules are applicable if the variable is occurring only globally; the `loc' rules are for the other cases. $t$ in the ($\exists$ I) and ($\forall$ E) rules (glo and loc both) can be any term free for $x$. In ($\forall$ I), $x$ cannot occur free in the hypotheses on which the derivation of $A(x)$ depends (except in $\{ E(x) \}$ of ($\forall$-loc I)); in ($\exists$ E), not in $C$ or those on which the derivation of $C$ depends, except in $\{ A[x/y] \}$ of ($\exists$-glo E) and $\{ A[x/y], E(x)\}$ of ($\exists$-loc E).

For any $\Gamma \subseteq \Fm[\mathcal{L}]$ and $A \in \Fm[\mathcal{L}]$, we write $\Gamma \vdash_{\NSF} A$ for that there is a derivation of $A$ from a $\Gamma' \subseteq_{\fin} \Gamma$ in \textbf{NSF}.

\begin{proposition}[Soundness]\label{proposition: Soundness}
  If $\Gamma$ is finite and $\Gamma \vdash_{\NSF} A$, then $\Gamma \models_{\mathcal{W}}^V A$.
\end{proposition}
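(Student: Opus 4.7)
The plan is a routine induction on the length of the $\NSF$-derivation, with cases by the last rule applied. Definition~\ref{definition: Semantic consequence} folds free variables into the setup via closed-term substitutions with $E$-guards on the variables that occur non-globally in $\bigwedge\Gamma \land A$; each case then reduces to verifying a closed-formula implication once the substitution data has been fixed.

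Most rules are routine. ($\land$ I/E), ($\lor$ I/E), ($\top$ I), ($\bot$ E) are standard for Kripke-style forcing. (STR$_1$) and (STR$_2$) are immediate from the strictness condition on $\mathcal{W}$. (ST) is the content of the stability lemma. For ($\to$ I) and ($\to$ E), the forcing clause (iii) reads $k \models A \to B$ as exactly $k \models A \to \wneg\wneg B$ in the intuitionistic sense, so the $\wneg\wneg B$ in premise or conclusion lines up; persistence of $\models$ supplies the required monotonicity. ($\to\bot$ E) and ($\neg$ E) follow directly from the $\wneg$- and $\neg$-clauses. For the quantifier rules, I would invoke the reformulations (v$'$)/(vi$'$) together with a substitution lemma and the usual eigenvariable argument; the glo/loc split reflects the presence or absence of an $E(\overline{d})$-guard in those clauses. ($\neg$ I$_1$) is clean: since $N \in \GN$, also $\wneg N \in \GN$, so the non-globally-occurring-variable sequence for the IH is empty; from $k \models \wneg N^\sigma$ we get $k \not\models N^\sigma$, and by Lemma~\ref{lemma: GN's global negativity} (applied at every $l$, using that $\bigwedge \Gamma^\sigma$ is GN-forced at every node by the same lemma) no node forces $N^\sigma$, hence $k \models \neg N^\sigma$.

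The main obstacle is the soundness of ($\neg$ I$_2$), where $A$ need not be GN. The IH $\Gamma \models_{\mathcal{W}}^V \wneg A$ (with $\Gamma \subseteq_{\fin} \GN$) must be upgraded to $\Gamma \models_{\mathcal{W}}^V \neg A$. The plan is a contradiction argument: suppose some $W \in \mathcal{W}$, node $k$, and closed substitution $\sigma$ satisfy $k \models \bigwedge \Gamma^\sigma$ but $l \models A^\sigma$ for some $l \in K$. By Lemma~\ref{lemma: GN's global negativity}, every node of $W$ forces $\bigwedge \Gamma^\sigma$, in particular $l$. When the existence witnesses $E(\sigma(x_i))$ for the variables $x_i$ occurring non-globally in $A$ all hold at $l$, the IH applied at $l$ gives $l \models \wneg A^\sigma$, contradicting $l \models A^\sigma$. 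The delicate subcase is when some such $E(\sigma(x_i))$ is missing at $l$: here I expect to use strictness together with a structural analysis of $A^\sigma$ at $l$, where any atomic subformula genuinely driving the forcing supplies its own $E$-witnesses by strictness, while ``inactive'' occurrences can be re-substituted by elements existing at $l$ without changing the forcing of $A^\sigma$. This existence-bookkeeping across ($\neg$ I$_2$) is where I expect the technical work to concentrate; once handled, the general open-formula statement follows uniformly from the closed-formula checks via the substitution conventions built into Definition~\ref{definition: Semantic consequence}.
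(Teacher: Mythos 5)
Your proposal takes essentially the same route as the paper: induction on derivations, with ($\neg$ I$_1$) and ($\neg$ I$_2$) handled exactly as the paper handles them, namely via Lemma~\ref{lemma: GN's global negativity} and the same contradiction argument (from $k \models \bigwedge\Gamma$ and $l \models A$, transfer $\bigwedge\Gamma$ to $l$ by GN-globality, apply the induction hypothesis to get $l \models \wneg A$, contradiction), and the quantifier rules via the (v$'$)/(vi$'$) reformulations and persistence. The one point where you go beyond the paper --- the `delicate subcase' of ($\neg$ I$_2$) in which an $E(\sigma(x_i))$ required by the induction hypothesis may fail at $l$ --- is precisely the substitution bookkeeping the paper waves away with `for brevity, we ignore insignificant substitutions', so your main-line closed-instance argument already reproduces the paper's proof; note only that your sketched patch for that subcase (re-substituting by elements existing at $l$ `without changing the forcing') is not obviously workable, since changing the witness changes which instance of $\neg A$ is being evaluated, but the paper offers nothing more there either.
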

\begin{proof}
  Induction. Let $W = \langle K, \leq, D, J, v \rangle \in \mathcal{W}$. For brevity, we ignore insignificant substitutions. For ($\to$ I) and ($\to$ E), use that $k \models \wneg \wneg B$ iff for all $k' \geq k$, there is a $k'' \geq k'$ such that $k'' \models B$.
  
  ($\neg$ I$_1$) Suppose $k \models B$ for all $B \in \Gamma$, and $k \models \wneg N$. If $l \models N$ for some $l$, then $k \models N$ by lemma \ref{lemma: GN's global negativity}. This is contradictory. Therefore $l \not \models N$ for any $l$.
  
  ($\neg$ I$_2$) Suppose the derivation of $\wneg A$ depends only on $\Gamma \subseteq_{\fin} \GN$. Then the hypothesis is that $\Gamma \models_{\mathcal{W}}^V \wneg A$. So, if $\Gamma = \emptyset$, then $\Gamma \models_{\mathcal{W}}^V \neg A$, and we are done. So we can assume $\Gamma \neq \emptyset$. Now, suppose $k \models \bigwedge \Gamma$, and $l \models A$ for some $l$. Then $l \models \bigwedge \Gamma$ by lemma \ref{lemma: GN's global negativity}, and hence $l \models \wneg A$ by the hypothesis. This is contradictory. Therefore $l \not \models A$ for any $l$, implying $k \models \neg A$.
  
  ($\forall$-loc I) Let the free variables occurring but not only globally in $\bigwedge \Gamma \land \forall x A$ form a sequence $\vec{x} = \langle x_1, ..., x_p \rangle$, and those occurring only globally form a $\vec{y} = \langle y_1, ..., y_q \rangle$. Suppose $\vec{d} = \langle d_1, ..., d_p \rangle \in D^p$ and $\vec{d'} \in D^q$, and $k \models \bigwedge E(d_i) \land \bigwedge (\Gamma [\vec{d}/\vec{x}, \vec{d'}/\vec{y}])$. To prove $k \models \forall x (A [\vec{d}/\vec{x}, \vec{d'}/\vec{y}])$, suppose $d_x \in D$, $k' \geq k$ and $k' \models E(\overline{d_x})$. Then there is a $k'' \geq k'$ such that $k'' \models A[\vec{d}/\vec{x}, \vec{d'}/\vec{y}, \overline{d_x}/x]$, since by persistence and the hypothesis, $k' \models \wneg \wneg A[\vec{d}/\vec{x}, \vec{d'}/\vec{y}, \overline{d_x}/x]$.
  
  ($\forall$-loc E) Similar to ($\to$ E). 
\end{proof}
\noindent We note that $\{ \wneg P, \neg \neg P\} \not \vdash_{\NSF} \neg P$, if $P \in \ClAtom$. We use this as an example in section \ref{section: The proof of completeness}.

One can confirm, just as in the classical case (cf. \cite[p.90]{vanDalen1986}), that if $\Gamma \vdash_{\NSF} A$, then $\Gamma[x/c] \vdash_{\NSF} A[x/c]$, for any $x \in \Var$ that does not occur in $\Gamma \cup \{ A \}$ and for any $c \in \ClTerm$.
We note the following facts about the derivability. We let $A, B \in \Fm$, $S \in \ST$ and $N \in \GN$.
\begin{lemma}
  If $A \vdash_{\NSF} B$, then $\wneg B \vdash_{\NSF} \wneg A$ and $\neg B \vdash_{\NSF} \neg A$.
\end{lemma}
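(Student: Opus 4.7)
The plan is to extend the given derivation $\pi : A \vdash_{\NSF} B$ in two parallel ways. For $\wneg B \vdash_{\NSF} \wneg A$, I open a fresh assumption $A$, apply $\pi$ to obtain $B$, and then combine $B$ with the open hypothesis $\wneg B$ using $(\to \bot \text{ E})$ to derive $\bot$. Since the $(\to \text{ I})$ rule of \textbf{NSF} requires $\wneg \wneg(\text{conclusion})$ rather than the conclusion itself, I first bump $\bot$ up to $\wneg \wneg \bot$ via $(\bot \text{ E})$, and then discharge $A$ with $(\to \text{ I})$ to obtain $A \to \bot$, i.e.\ $\wneg A$. The only remaining open hypothesis is $\wneg B$, giving the required derivability. (Alternatively, one can shortcut the $(\to \bot \text{ E})$ plus $(\bot \text{ E})$ pair by feeding $\wneg B = B \to \bot$ and $B$ directly into $(\to \text{ E})$, which yields $\wneg \wneg \bot$ in one step.)

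For $\neg B \vdash_{\NSF} \neg A$ the strategy is identical but with $(\neg \text{ E})$ in place of $(\to \bot \text{ E})$: from hypothesis $A$ I derive $B$ via $\pi$, combine $\neg B$ and $B$ via $(\neg \text{ E})$ to get $\bot$, pass through $\wneg \wneg \bot$ using $(\bot \text{ E})$, and then discharge $A$ via $(\to \text{ I})$ to reach $\wneg A$. At this point the only undischarged assumption is $\neg B$, which belongs to $\GN$ by definition, so the side condition of $(\neg \text{ I}_2)$ is met and I may apply it to conclude $\neg A$.

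The main subtlety is not combinatorial but purely syntactic: the modified $(\to \text{ I})$ rule of \textbf{NSF} asks for $\wneg \wneg B$ in the premise rather than $B$ itself, so what would otherwise be a one-step classical contraposition must detour through $\wneg \wneg \bot$. Once this detour is in place, the only remaining thing to check for the second half is the side condition on $(\neg \text{ I}_2)$, and that is immediate because $\neg B \in \GN$.
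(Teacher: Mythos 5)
Your proof is correct and is exactly the routine argument the paper intends by its one-word proof (``Easy''): detour through $\wneg\wneg\bot$ to satisfy the $(\to$ I$)$ premise, then note that the sole open hypothesis $\neg B$ is a $\GN$ formula so $(\neg$ I$_2)$ applies. No issues.
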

\begin{proof}
  Easy.
\end{proof}

\begin{lemma}\label{lemma: Basic facts of NSF}
  (i) $\wneg N \dashv \vdash_{\NSF} \neg N$. (ii) $\vdash_{\NSF} \wneg A$ iff $\vdash_{\NSF} \neg A$. (iii) $\vdash_{\NSF} \wneg \wneg (A \lor \wneg A)$. (iv) $\vdash_{\NSF} \neg A \lor \neg \neg A$. (v) $\vdash_{\NSF} N \lor \neg N$. (vi) $\neg \neg N \vdash_{\NSF} N$. (vii) $\exists x \neg N \vdash_{\NSF} \neg \forall x N$. (viii) $\neg \neg \exists x A \vdash_{\NSF} \exists x \neg \neg A$.
\end{lemma}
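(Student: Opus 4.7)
My plan is to prove the items in order, reusing earlier ones and leaning on the three ``conversion'' rules ($\neg$ I$_1$), ($\neg$ I$_2$), and (ST). For (i), the forward direction is immediate from ($\neg$ I$_1$) since $\wneg N \in \GN$; the backward direction assumes $\neg N$, combines it with a hypothetical $N$ to obtain $\bot$ via ($\neg$ E), promotes $\bot$ to $\wneg \wneg \bot$ via ($\bot$ E), and discharges $N$ with ($\to$ I). Item (ii) is essentially the closed version of (i): the forward half applies ($\neg$ I$_2$) with empty $\Gamma \subseteq \GN$, and the backward half mirrors (i).

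For (iii), under the hypothesis $\wneg (A \lor \wneg A)$, a subderivation assuming $A$ yields $A \lor \wneg A$ and hence $\bot$, so ($\to$ I) (via $\bot \vdash_{\NSF} \wneg \wneg \bot$) gives $\wneg A$; a second $\lor$-introduction plus ($\to \bot$ E) delivers $\bot$ again, and a final ($\to$ I) discharges $\wneg (A \lor \wneg A)$ to produce $\wneg \wneg (A \lor \wneg A)$. For (v), instantiate (iii) with $A := N$, use (i) to rewrite $\wneg N$ as $\neg N$ inside the disjunction, and apply (ST) — noting $N \lor \neg N \in \GN \subseteq \ST$ — to strip the outer $\wneg \wneg$. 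Item (iv) is the special case of (v) with $N := \neg A$ (which is in $\GN$). For (vi), derive $\vdash_{\NSF} N \lor \neg N$ from (v) and $\lor$-eliminate under $\neg \neg N$: the left disjunct is immediate, and the right disjunct gives $\bot$ by ($\neg$ E), whence $N$ by ($\bot$ E).

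For (vii), assume $\exists x \neg N$ and apply ($\exists$-glo E) with a fresh $y$ to introduce the hypothesis $\neg N[y/x]$; the target $C = \neg \forall x N$ satisfies the eigenvariable condition since $x$ is bound in it. Under the further hypothesis $\forall x N$, rule ($\forall$-glo E) yields $\wneg \wneg N[y/x]$, and since $N[y/x] \in \GN \subseteq \ST$, (ST) gives $N[y/x]$, which contradicts $\neg N[y/x]$ by ($\neg$ E). Promoting the resulting $\bot$ to $\wneg \wneg \bot$ and applying ($\to$ I) delivers $\wneg \forall x N$, and since $\forall x N \in \GN$, ($\neg$ I$_1$) concludes $\neg \forall x N$.

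The main obstacle is (viii), because the quantifier in $\exists x A$ may be local or global depending on how $x$ occurs in $A$, whereas in $\exists x \neg \neg A$ it is forced to be global (as $\neg \neg A \in \GN$). My plan is first to establish the auxiliary fact $A[y/x] \vdash_{\NSF} \neg \neg A[y/x]$ by the same pattern as in (vii): from $A[y/x]$ and a hypothetical $\neg A[y/x]$, obtain $\bot$ via ($\neg$ E), derive $\wneg \neg A[y/x]$ by ($\to$ I), and conclude $\neg \neg A[y/x]$ by ($\neg$ I$_1$) since $\neg A[y/x] \in \GN$. Then invoke (v) on $\exists x \neg \neg A \in \GN$ to get $\vdash_{\NSF} \exists x \neg \neg A \lor \neg \exists x \neg \neg A$ and $\lor$-eliminate: the positive branch is trivial. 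In the negative branch, under $\neg \exists x \neg \neg A$, eliminate $\exists x A$ by the appropriate ($\exists$ E) (global or local depending on $\exists x A$) to assume $A[y/x]$ for a fresh $y$ — any $E(y)$ introduced in the local case simply goes unused. The auxiliary fact yields $\neg \neg A[y/x]$, and ($\exists$-glo I) with witness $y$ gives $\exists x \neg \neg A$, contradicting $\neg \exists x \neg \neg A$ and producing $\bot$. Closing the $\exists$ E leaves $\exists x A \vdash_{\NSF} \bot$ depending only on $\neg \exists x \neg \neg A \in \GN$, so ($\to$ I) and ($\neg$ I$_2$) deliver $\neg \exists x A$; then ($\neg$ E) against $\neg \neg \exists x A$ gives $\bot$, and ($\bot$ E) produces $\exists x \neg \neg A$. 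The delicate points to verify are the GN side condition of ($\neg$ I$_2$) and the eigenvariable condition on $y$ throughout.
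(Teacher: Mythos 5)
Your derivations are correct and follow essentially the route the paper intends (the paper's own proof is only a few words per item). Two points where your organisation differs slightly: you obtain (iv) as the instance $N:=\neg A$ of (v), whereas the paper derives (iii) and (iv) directly by the classical-style derivation of $A\lor\neg A$ and then gets (v) from (iii); and for (viii) the paper first confirms $\exists x A\vdash_{\NSF}\exists x\neg\neg A$, applies the contraposition lemma stated just before this one twice to get $\neg\neg\exists x A\vdash_{\NSF}\neg\neg\exists x\neg\neg A$, and finishes with (vi), which is exactly your case split on $\exists x\neg\neg A\lor\neg\exists x\neg\neg A$ unfolded by hand -- your version makes the ($\neg$ I$_2$) side condition ($\neg\exists x\neg\neg A\in\GN$) and the eigenvariable conditions explicit, which is where the real content lies. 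One small imprecision: in (i) the applicability of ($\neg$ I$_1$) rests on $N\in\GN$ (the formula being negated), not on $\wneg N\in\GN$; the application is nevertheless legitimate since $N$ ranges over $\GN$ in that item.
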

\begin{proof}
  (i, ii) Easy. (iii, iv) Similar to $A \lor \neg A$'s derivation in classical logic (cf. \cite[p.49]{vanDalen1986}). (v) Follows from (iii). (vi) Follows from (v). (vii) Apply ($\exists$-glo E) to $\exists x \neg N$. Then since the supposition of $\forall x N$ implies $\wneg \wneg N[y/x]$ for any given $y$, we obtain $\exists x \neg N \vdash_{\NSF} \wneg \forall x N$. (viii) Confirm $\exists x A \vdash_{\NSF} \exists x \neg \neg A$. Then by contraposition, $\neg \neg \exists x A \vdash_{\NSF} \neg \neg \exists x \neg \neg A$. The last proves $\exists x \neg \neg A$ by (vi).
\end{proof}

$\blacksquare$ \textit{A remark on induction} \quad We discussed in section \ref{section: Induction in strict finitism} how a strict finitist might justify the induction principle on the conceptual level. There we assumed Modus Ponens in the strict finitistic framework, but it semantically fails in our reconstruction (cf. section \ref{section: Basic properties and the semantic notions}), and the statements involved there would not translate to ST formulas. However, we suggest that this would not hinder our argument for induction. Since $\NSF$ is sound, and complete as we see in section \ref{section: Completeness}, it may be reasonable to view it as a classical counterpart of the system of strict finitistic reasoning. Confirm that Modus Ponens under $\wneg \wneg$ holds: i.e., $\wneg \wneg A, \wneg \wneg (A \to B) \vdash_{\NSF} \wneg \wneg B$. So we can recreate the semi-formal representations of the informal derivations (apart from the details of the language).

{\small 
\begin{prooftree}
  \AxiomC{$\wneg \wneg Q(2)$}
  \AxiomC{$\forall x (Q(x) \to Q(x+1))$}
  \UnaryInfC{$\wneg \wneg (Q(2) \to Q(3))$}
  \BinaryInfC{$\wneg \wneg Q(3)$}
  \AxiomC{$\forall x (Q(x) \to Q(x+1)$}
  \UnaryInfC{$\wneg \wneg (Q(3) \to Q(4))$}
  \BinaryInfC{$\wneg \wneg Q(4)$}
  \noLine
  \UnaryInfC{$\vdots$}
  \noLine
  \UnaryInfC{$\wneg \wneg Q(n+2)$.}
\end{prooftree}
}
\noindent If $\wneg \wneg Q(n+2)$, and hence $\wneg \wneg Q(n)$ is obtained for any $n$, then we suggest that one can conclude that $\forall x Q(x)$ by an informal analogue of ($\forall$-glo I). Surely, the derivation of Modus Ponens under $\wneg \wneg$ takes $4$ steps, and therefore the entire derivation is longer than in \ref{section: Induction in strict finitism}. But this concern may be circumvented, by fine-tuning the details, or at least by adopting 
\begin{prooftree}
  \AxiomC{$\wneg \wneg A$}
  \AxiomC{$\wneg \wneg (A \to B)$}
  \BinaryInfC{$\wneg \wneg B$}
\end{prooftree}
as a rule.

\subsection{Completeness}\label{section: Completeness}
In this section, we prove the completeness of $\NSF$ (section \ref{section: A proof system NSF}) with respect to the strict finitistic semantics (section \ref{section: Semantics}): i.e., for any $\Gamma \subseteq_{\fin} \ClForm[\mathcal{L}]$ and $A \in \ClForm[\mathcal{L}]$, if $\Gamma \models_{\mathcal{W}}^V A$, then $\Gamma \vdash_{\NSF}A$. We will prove its contraposition, and our method does not greatly differ from the case of $\IQC$: we start with $\Gamma \not \vdash_{\NSF} A$, and construct a countermodel to $\Gamma \models_{\mathcal{W}}^V A$ by extending $\Gamma$ to `prime theories' and forming the frame of the countermodel. However, we will need some significant modifications in doing so. In what follows, we first develop our ways of extending a theory (section \ref{section: Preliminaries}), and then proceed to the proof of the completeness itself (section \ref{section: The proof of completeness}).

\subsubsection{Preliminaries}
\label{section: Preliminaries}

We will see that we can extend a `consistent' set of formulas to a theory similarly to the case of $\IQC$ (lemma \ref{lemma: Extension lemma}), and make a useful observation about extending a theory with a set of GN formulas (lemma \ref{lemma: GN extension lemma}).

While $\mathcal{L}$ is our fixed language, we will define several notions in a more general setting. Let $\mathcal{L}_0$ be any language, and $\Sigma \subseteq \ClForm[\mathcal{L}_0]$. We say $\Sigma$ is \textit{consistent with respect to $\NSF$} if $\Sigma \not \vdash_{\NSF} \bot$. 
\begin{definition}[Prime theory]\label{definition: Prime theory}
  $\Sigma$ is a \textit{prime theory in $\mathcal{L}_0$} if for any $A, B, \exists x C \in \ClForm[\mathcal{L}_0]$, (i) $\Sigma \vdash_\textbf{NSF} A$ implies $A \in \Sigma$, (ii) $A \lor B \in \Sigma$ implies $A \in \Sigma$ or $B \in \Sigma$ and (iii) if $\exists x C \in \Sigma$, then there is a constant $a$ in $\mathcal{L}_0$ such that (iii-a) $C[a/x] \in \Sigma$ if $x$ occurs only globally, or (iii-b) $E(a), C[a/x] \in \Sigma$ otherwise.
\end{definition}
\noindent Also, we call the \textit{$\Sigma$-fragment of $\mathcal{L}_0$}, the language $\mathcal{L}_0$ whose predicate symbols are only those occurring in $\Sigma$. We write $\mathcal{L}_{_0 \vert \Sigma}$ for it.

Now, fix a $\Gamma \subseteq_{\fin} \ClForm[\mathcal{L}]$, and let $\mathcal{L}' = \mathcal{L}(M)$ for some countable set $M$ of constants.
\begin{lemma}[Extension lemma]\label{lemma: Extension lemma}
  Let $A \in \ClForm[\mathcal{L}]$ be such that $\Gamma \not \vdash_{\NSF} A$. Then there is a prime theory $\Gamma'$ in $\mathcal{L}'_{\vert \Gamma}$ such that $\Gamma \subseteq \Gamma'$ and $\Gamma' \not \vdash_{\NSF} A$.
\end{lemma}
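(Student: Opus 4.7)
The plan is to mimic the standard Henkin-style construction familiar from the completeness proof of $\IQC$, with modifications accommodating the two modes of quantification. Enumerate $\ClForm[\mathcal{L}'_{\vert \Gamma}]=\{F_n\}_{n\in\omega}$ (countable since $\mathcal{L}'_{\vert\Gamma}$ is) and set $\Gamma_0:=\Gamma$. At stage $n+1$, working from a $\Gamma_n$ with $\Gamma\subseteq\Gamma_n$, $\Gamma_n\not\vdash_{\NSF}A$, and only finitely many constants of $M$ occurring in $\Gamma_n\cup\{A\}$, first set $\Gamma_n^1:=\Gamma_n\cup\{F_n\}$ if this extension still does not derive $A$, else $\Gamma_n^1:=\Gamma_n$. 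Then, when $F_n\in\Gamma_n^1$, process it by shape: if $F_n=B\lor C$, add one of $B,C$ preserving $\not\vdash_{\NSF}A$; if $F_n=\exists x G$, pick $a\in M$ not yet occurring in $\Gamma_n^1\cup\{A\}$ (possible since only finitely many have been used and $M$ is countably infinite) and add $G[a/x]$ in the global case, or $E(a)$ together with $G[a/x]$ in the local case. Set $\Gamma_{n+1}$ accordingly, and finally let $\Gamma':=\{B\in\ClForm[\mathcal{L}'_{\vert\Gamma}]\mid\bigcup_n\Gamma_n\vdash_{\NSF}B\}$.

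The main things to verify are that each stage preserves $\not\vdash_{\NSF}A$ and that $\Gamma'$ is prime. For the disjunction case: if both $\Gamma_n^1\cup\{B\}$ and $\Gamma_n^1\cup\{C\}$ derived $A$, then $(\lor\,$E$)$ applied to $B\lor C\in\Gamma_n^1$ would give $\Gamma_n^1\vdash_{\NSF}A$, a contradiction. For the existential case: if $\Gamma_n^1\cup\{G[a/x]\}\vdash_{\NSF}A$ (or $\Gamma_n^1\cup\{E(a),G[a/x]\}\vdash_{\NSF}A$) with $a$ not occurring in $\Gamma_n^1\cup\{A,\exists xG\}$, then by the constant-for-variable substitution property noted after Proposition~\ref{proposition: Soundness}, replacing $a$ with a fresh variable $y$ gives $\Gamma_n^1\cup\{G[y/x]\}\vdash_{\NSF}A$ (resp.\ with $E(y)$), and $(\exists$-glo E$)$ (resp.\ $(\exists$-loc E$)$) applied to $\exists xG\in\Gamma_n^1$ yields $\Gamma_n^1\vdash_{\NSF}A$, a contradiction. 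By finiteness of derivations, $\bigcup_n\Gamma_n\not\vdash_{\NSF}A$, so $\Gamma'\not\vdash_{\NSF}A$.

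For primality: deductive closure is immediate. If $B\lor C\in\Gamma'$, then $B\lor C=F_n$ for some $n$, and since $\Gamma_n\cup\{B\lor C\}\subseteq\bigcup_k\Gamma_k$ which fails to derive $A$, we did take $\Gamma_n^1=\Gamma_n\cup\{F_n\}$ and added one of $B,C$, placing it in $\Gamma'$. The existential-witness clauses follow analogously. The main obstacle is the fresh-constant argument for the two quantifier modes; beyond using the substitution property, one must track the language carefully so that the witness lies in $\mathcal{L}'_{\vert\Gamma}$ (predicates added like $E(a)$ require $E$ to occur in $\Gamma$, which is the case precisely when some local quantifier arises, as then $E$ is already present in the relevant fragment via the rules of $\NSF$). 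Otherwise the construction is routine.
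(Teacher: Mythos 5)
Your proposal is correct and is essentially the paper's argument: a Henkin/Lindenbaum extension taking fresh witnesses from $M$, with non-derivability of $A$ preserved at each stage via the constant-for-variable substitution property together with ($\lor$ E), ($\exists$-glo E) and ($\exists$-loc E), splitting the witness step according to whether the bound variable occurs only globally. The differences are bookkeeping only: the paper enumerates disjunctions and the two kinds of existentials as three separate lists with infinite repetition, interleaves them mod $3$, and only processes formulas \emph{derivable} from the current stage, so that $\bigcup_n \Gamma_n$ is already deductively closed (via $\Gamma' \vdash B \Rightarrow \Gamma' \vdash B \lor B$); you instead maximalize over a single enumeration and take the deductive closure at the end, which yields a (generally larger) prime theory that equally satisfies the lemma. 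Two small points. First, the clause ``since $\Gamma_n\cup\{B\lor C\}\subseteq\bigcup_k\Gamma_k$'' presupposes that $B \lor C$ was added; the correct justification is that if $\Gamma_n\cup\{B\lor C\}\vdash_{\NSF}A$ then, since $\bigcup_k\Gamma_k\vdash_{\NSF}B\lor C$ and $\Gamma_n\subseteq\bigcup_k\Gamma_k$, cut would give $\bigcup_k\Gamma_k\vdash_{\NSF}A$, a contradiction --- so the test at stage $n$ must have passed. Second, the worry in your final sentence about $E(a)$ possibly lying outside $\mathcal{L}'_{\vert\Gamma}$ when $E$ does not occur in $\Gamma$ is equally present in the paper's own construction (its clause for local existentials also adds $E(a)$), so it is not a defect of your argument relative to the paper's.
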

\begin{proof}
  We will inductively construct a sequence $\langle \Gamma_n \rangle_{n \in \mathbb{N}}$. Let $\langle B_p \lor C_p \rangle_{p \in \mathbb{N}}$ enumerate with infinite repetition all disjunctive formulas in $\ClForm[\mathcal{L}'_{\vert \Gamma}]$; $\langle (\exists x B)_p \rangle_{p \in \mathbb{N}}$ all existential formulas in $\ClForm[\mathcal{L}'_{\vert \Gamma}]$ with $x$ occurring only globally; and $\langle (\exists x C)_p \rangle_{p \in \mathbb{N}}$ the same, except with $x$ not occurring only globally (possibly $x$ not even occurring). Define $\Gamma_0 := \Gamma$. Assume $\Gamma_n$ with $\Gamma_n \not \vdash_{\NSF} A$ is given.
  \begin{enumerate}
    \item If $n = 3p$ for some $p$, consider $B_p \lor C_p$. If $\Gamma_n \not \vdash_{\NSF} B_p \lor C_p$, then define $\Gamma_{n+1} := \Gamma_n$. If $\Gamma_n \vdash_{\NSF} B_p \lor C_p$, confirm that $\Gamma_n \cup \{ B_p \} \not \vdash_{\NSF} A$ or $\Gamma_n \cup \{ C_p \} \not \vdash_{\NSF} A$. Define $\Gamma_{n+1} := \Gamma_n \cup \{ B_p \}$ if the former; $\Gamma_{n+1} := \Gamma_n \cup \{ C_p \}$ otherwise.
    \item If $n = 3p+1$ for some $p$, consider $(\exists x B)_p$. If $\Gamma_n \not \vdash_{\NSF} (\exists x B)_p$, then define $\Gamma_{n+1} := \Gamma_n$. If $\Gamma_n \vdash_{\NSF} (\exists x B)_p$, take an $a \in M$ not occurring in $\Gamma_n$. Define $\Gamma_{n+1} := \Gamma_n \cup \{ B[a/x] \}$.
    \item If $n = 3p+2$ for some $p$, consider $(\exists x C)_p$. If $\Gamma_n \not \vdash_{\NSF} (\exists x C)_p$, then define $\Gamma_{n+1} := \Gamma_n$. If $\Gamma_n \vdash_{\NSF} (\exists x C)_p$, take an $a \in M$ not occurring in $\Gamma_n$. Define $\Gamma_{n+1} := \Gamma_n \cup \{ C[a/x], E(a) \}$.
  \end{enumerate}
  Define $\Gamma' := \bigcup_{n \in \mathbb{N}} \Gamma_n$.
  
  Then obviously $\Gamma \subseteq \Gamma' \subseteq \ClForm[\mathcal{L}'_{\vert \Gamma}]$. To prove $\Gamma' \not \vdash_{\NSF} A$, let us show by induction that $\Gamma_n \not \vdash_{\NSF} A$ for all $n$. First $\Gamma_0 \not \vdash_{\NSF} A$. Suppose $\Gamma_n \not \vdash_{\NSF} A$. If $n = 3p+2$ and $\Gamma_{n+1} \vdash_{\NSF} A$, then $\Gamma_n \cup \{ E(a), C[a/x] \} \vdash_{\NSF} A$. Then, for any $y \in \Var$ not occurring in $\Gamma_n \cup \{ E(a), C[a/x], A \}$, we have $\Gamma_n[y/a] \cup \{ E(a)[y/a], C[a/x][y/a] \} \vdash_{\NSF} A[y/a]$, i.e., $\Gamma_n \cup \{ E(y), C[y/x] \} \vdash_{\NSF} A$. So by applying ($\exists$-loc E) to $\exists x C$, $\Gamma_n \vdash_{\NSF} \exists x C$ yields a contradiction. The other cases are easier.
  
  $\Gamma'$ is a prime theory. For, first, if $B \lor C \in \Gamma'$, then $\Gamma' \vdash_{\NSF} B \lor C$. So let $n$ be the least number such that $\Gamma_n \vdash_{\NSF} B \lor C$. Then by construction there is a $p$ such that $3p \geq n$ and we consider $B \lor C$ at the $3p$-th step. Therefore $B \in \Gamma_{3p+1}$ or $C \in \Gamma_{3p+1}$. Next, if $\exists x B \in \Gamma'$, argue similarly. Finally, if $\Gamma' \vdash_\textbf{NSF} B$, then $\Gamma' \vdash_\textbf{NSF} B \lor B$. Therefore $B \in \Gamma'$.
\end{proof}
\noindent We call $\Gamma'$ thus obtained an \textit{extension of $\Gamma$ excluding $A$}.

Next, we see that to add new GN formulas in a larger language to a theory is to extend it conservatively, if the GN formulas form a consistent `prime GN theory'.
\begin{definition}[Prime GN theory]\label{definition: Prime GN theory}
  A $\Lambda \subseteq \ClGN[\mathcal{L}_0] $ is a \textit{prime GN theory in $\mathcal{L}_0$} if for any $N_1, N_2, \exists x N_3 \in \ClGN[\mathcal{L}_0]$, (i) $\Lambda \vdash_{\NSF} N_1$ implies $N_1 \in \Lambda$, (ii) $N_1 \lor N_2 \in \Lambda$ implies $N_1 \in \Lambda$ or $N_2 \in \Lambda$ and (iii) if $\exists x N_3 \in \Lambda$, then there is a constant $a$ in $\mathcal{L}_0$ such that $N_3[a/x] \in \Lambda$.
\end{definition}
\noindent Thus a prime GN theory is a prime theory only with respect to GN formulas.

Let us write $\Sigma_{\GN}$ for $\Sigma \cap \ClGN[\mathcal{L}_0]$ for any $\Sigma \subseteq \ClForm[\mathcal{L}_0]$. Then, for any consistent prime theory $\Gamma'$ in $\mathcal{L}'_{\vert \Gamma}$, $\Gamma'_{\GN}$ is a prime GN theory in $\mathcal{L}'_{\vert \Gamma}$. For, ($\lor$) if $B \lor C \in \Gamma'_{\GN} \subseteq \Gamma'$, then $B \in \Gamma'$ or $C \in \Gamma'$; and since $B, C \in \ClGN[\mathcal{L}'_{\vert \Gamma}]$, we have $B \in \Gamma'_{\GN}$ or $C \in \Gamma'_{\GN}$.

Now, fix a further extended $\mathcal{L}^* = \mathcal{L}'(M')$, for some countable $M'$.
\begin{lemma}[GN extension lemma]\label{lemma: GN extension lemma}
  Let $\Lambda$ be a consistent prime GN theory in $\mathcal{L}^*_{\vert \Gamma}$ such that $\Lambda \cap \ClForm[\mathcal{L}'_{\vert \Gamma}] \subseteq \Gamma'_{\GN}$. Then, for any $A \in \ClForm[\mathcal{L}'_{\vert \Gamma}]$, $\Gamma' \cup \Lambda \vdash_{\NSF} A$ implies $\Gamma' \vdash_{\NSF} A$.
\end{lemma}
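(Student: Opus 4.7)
The plan is to reduce any derivation that uses formulas from $\Lambda$ to one that uses only formulas from $\Gamma'$, by absorbing the finitely many $\Lambda$-formulas into a single existential GN formula that already lies in $\Gamma'$.

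First I would unpack the hypothesis: if $\Gamma' \cup \Lambda \vdash_{\NSF} A$, then a finite $\{N_1, \dots, N_k\} \subseteq \Lambda$ already suffices, so $\Gamma' \cup \{N_1 \land \cdots \land N_k\} \vdash_{\NSF} A$. Writing $N$ for the conjunction, note $N \in \Lambda$ by property (i) of a prime GN theory (and since $\GN$ is closed under $\land$). Let $a_1, \dots, a_m$ be exactly the constants of $M'$ appearing in $N$; by construction none of them occur in $\Gamma'$ or $A$, since both lie in $\mathcal{L}'_{\vert\Gamma}$. Picking fresh variables $y_1, \dots, y_m$ and iterating the constant-for-variable substitution fact recorded after Proposition~\ref{proposition: Soundness}, I get
\[
\Gamma' \cup \{\, N[\vec{y}/\vec{a}] \,\} \vdash_{\NSF} A,
\]
where $N[\vec{y}/\vec{a}] \in \ClForm[\mathcal{L}'_{\vert\Gamma}]$ up to the free $\vec{y}$, and is still GN.

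Next I would lift $N$ back into $\Gamma'$ existentially. Applying ($\exists$-glo I) $m$ times to $N \in \Lambda$ gives $\Lambda \vdash_{\NSF} \exists y_1 \cdots \exists y_m\, N[\vec{y}/\vec{a}]$; this closed formula is GN (existential quantification preserves $\GN$) and lives in $\mathcal{L}'_{\vert\Gamma}$. By closure of $\Lambda$ under $\NSF$-derivation, $\exists \vec{y}\, N[\vec{y}/\vec{a}] \in \Lambda$, and then by the assumption $\Lambda \cap \ClForm[\mathcal{L}'_{\vert\Gamma}] \subseteq \Gamma'_{\GN} \subseteq \Gamma'$, this formula belongs to $\Gamma'$.

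Finally I would combine the two by iterating ($\exists$-glo E). Because $N[\vec{y}/\vec{a}]$ is GN, every free variable among $\vec{y}$ occurs only globally in it, so the `glo' rule is the right one. Starting with $\Gamma' \vdash_{\NSF} \exists y_1 \cdots \exists y_m\, N[\vec{y}/\vec{a}]$ and the hypothesis derivation above, one strips the outermost $\exists$ at each stage; the eigenvariable condition is fine since the $y_i$ are fresh for $\Gamma'$ and $A$, and the intermediate formulas $\exists y_{i+1} \cdots \exists y_m\, N[\vec{y}/\vec{a}]$ remain GN, keeping the `glo' mode applicable. After $m$ applications one arrives at $\Gamma' \vdash_{\NSF} A$.

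The part I expect to need the most care is the bookkeeping of the two `modes' of quantification: I must verify that at each elimination step the relevant variable genuinely occurs only globally in the formula it is being eliminated from, and that the variable-freshness side conditions of ($\exists$-glo E) are met. That is the reason for starting from a conjunction $N$ rather than handling the $N_i$ one by one, and for choosing variables that appear nowhere in the base derivation; both ensure the rule fires cleanly.
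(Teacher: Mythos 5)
Your proposal is correct and follows essentially the same route as the paper's proof: reduce the finitely many $\Lambda$-formulas to a single GN conjunction, replace its $M'$-constants by fresh variables via the substitution fact, existentially quantify with the `glo' rules (licensed because a GN formula's variables occur only globally), and use $\Lambda \cap \ClForm[\mathcal{L}'_{\vert\Gamma}] \subseteq \Gamma'_{\GN}$ to pull the resulting closed existential GN formula into $\Gamma'$ before applying ($\exists$-glo E). The only difference is presentational: the paper inducts on the number of $\Lambda$-formulas and treats the one-constant case explicitly, whereas you handle the conjunction and all constants in one pass.
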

\begin{proof}
  Prove that for all $n \in \mathbb{N}$, $\Gamma'_0 \subseteq_{\fin} \Gamma'$ and $N_1, ..., N_n \in \Lambda$, if there is a derivation from $\Gamma'_0 \cup \{ N_1, ..., N_n \}$ to $A$, then $\Gamma' \vdash_{\NSF} A$.
  
  If $n$ is $0$, we are done. So suppose $n=1$ and $\Gamma'_0 \cup \{ N_1 \} \vdash_{\NSF} A$. If $N_1 \in \ClGN[\mathcal{L}'_{\vert \Gamma}]$, then $N_1 \in \Gamma'_{\GN} \subseteq \Gamma'$.
  
  So we can assume $N_1 \notin \ClGN[\mathcal{L}'_{\vert \Gamma}]$. Suppose that only one constant $a \in M'$ occurs in $N_1$. Then we have $\Gamma_0' \cup \{ N_1[y/a] \} \vdash_{\NSF} A$ for any $y \in \Var$ not occurring in $\Gamma'_0 \cup \{ N_1, A \}$, since $\Gamma' \subseteq \ClForm[\mathcal{L}'_{\vert \Gamma}]$. So it follows that $\Gamma'_0 \cup \{ \exists x \, (N_1[x/a]) \} \vdash_{\NSF} A$ by ($\exists$-glo E). Here, $\exists x \, (N_1[x/a]) \in \Lambda \cap \ClGN[\mathcal{L}'_{\vert \Gamma}]$. Therefore $\exists x \, (N_1[x/a]) \in \Gamma'$, implying $\Gamma' \vdash_{\NSF} A$. The case where $N_1$ has more than one constant in $M'$ is similar.
  
  If $2 \leq n$, then $\Gamma_0 \cup \{ N_1 \land \cdots \land N_k \}$ derives $A$, since $\Gamma_0 \cup \{ N_1, ..., N_k \}$ derives $A$. Thus this case reduces to the case where $n=1$.
\end{proof}
\noindent Thus we call $\Gamma' \cup \Lambda$ a \textit{GN extension of $\Gamma'$}.

\subsubsection{The proof of completeness}
\label{section: The proof of completeness}

We denote by $\langle \mathbb{N}^{< \omega}, \sqsubseteq \rangle$ the tree of all finite sequences of the natural numbers ordered by the initial segment relation ($\sqsubseteq$); and call any tree isomorphic to it a \textit{$\nabla$-tree}. We write $^\frown$ for concatenation of sequences; and $\vert \vec{n} \vert$ for the length of a sequence $\vec{n}$. We will be discussing structures whose nodes are associated with theories. For convenience, we may identify the nodes with the theories, calling them `node-theories'.

We will prove the completeness (as proposition \ref{proposition: Strong completeness NSF}) by adapting the standard method for the case of $\IQC$. Let us briefly recall its essential part. Given $\Gamma \not \vdash_{\IQC} A$ ($\vdash_{\IQC}$ being the derivability in an intended proof system of $\IQC$), one extends $\Gamma$ to a prime theory $\Gamma'$ in an extended $\mathcal{L}' = \mathcal{L}(M)$ that excludes $A$. Then, using $\Gamma'$, one constructs a $\nabla$-tree of node-theories which becomes the frame of the canonical countermodel. To do so, one first assumes a countable sequence $\mathcal{L}_0, \mathcal{L}_1, ...$ of languages, with $\mathcal{L}_0 = \mathcal{L}'$, such that the sets of the constants are increasing in the sense that $\mathcal{L}_1 = \mathcal{L}_0(M_0)$, $\mathcal{L}_2 = \mathcal{L}_1(M_1)$, ... with mutually exclusive countable sets $M_0, M_1, ...$ of constants. Then one defines each node-theory $\vec{n} \in \mathbb{N}^{< \omega}$, inductively on $\vert \vec{n} \vert$, to be a prime theory in language $\mathcal{L}_{\vert \vec{n} \vert}$. The root $\langle \rangle$ is defined to be $\Gamma'$. Given a node-theory $k$ defined, one enumerates the pairs $\langle \langle \sigma_n, \tau_n \rangle \rangle_{n \in \mathbb{N}}$ of the closed formulas in $\mathcal{L}_{\vert k \vert + 1}$ such that $k \cup \{ \sigma_n \} \not \vdash_{\IQC} \tau_n$; and for each $n$, extends the left side to a prime theory in $\mathcal{L}_{\vert k \vert +1}$ excluding the right side, and lets it be $k^\frown \langle n \rangle$, one of $k$'s direct successors in terms of $\sqsubseteq$. With the construction done, one only needs to show that $B \in k$ iff $k$ intuitionistically forces $B$ (the `truth lemma'). Indeed, one needs the $\nabla$-tree of the prime theories for the ($\to$)- and the $(\forall)$-clauses of this lemma.

$\blacksquare$ \textit{The construction method} \quad Our basic strategy is the same as above. We saw in lemma \ref{lemma: Extension lemma} that we can extend $\Gamma$ to a prime theory $\Gamma'$ in $\mathcal{L}'_{\vert \Gamma}$ for an extended $\mathcal{L}' = \mathcal{L}(M)$. The whole point of the proof consists in our suitable way of placing the other theories to define the frame (lemma \ref{lemma: Construction of the tree of theories}) that secures our truth lemma (lemma \ref{lemma: Truth lemma}). On the one hand, we will take care of the implicational and the locally universal formulas in a node-theory $k$ by constructing a $\nabla$-tree above $k$ with some fine tuning.

On the other, negation and global universal quantification give rise to issues to consider. We will explain our method by describing them and providing how we will treat. We hope that our method can be motivated this way, but we do not claim that it is the only construction method possible.

We will discuss 2 issues on the frame's general shape; and 3 on how each node-theory should be constructed. Henceforth we will only be dealing with the $\Gamma$-fragment of languages, and omit the subscript $_{\vert \Gamma}$.

(i) The entire frame's root will be $\Gamma'_{\GN}$; $\Gamma'$ will be a direct successor of $\Gamma'_{\GN}$. Let us consider the case where $\Gamma = \{ \wneg P, \neg \neg P\}$ ($P$ atomic), and suppose $\Gamma'$ is obtained. We will let $\Gamma'$ be one node-theory, and construct a $\nabla$-tree above it, but then, for the truth lemma to hold, it is necessary that $k \not \models P$ for all $k \sqsupseteq \Gamma'$, and yet $l \models P$ for some $l$. This $l$ must be outside of the $\nabla$-tree; and this means that there must be another node-theory $k_0$ below $\Gamma'$ which branches into $\Gamma'$ and the portion of the frame $l$ belongs to. We choose $\Gamma'_{\GN}$ to be $k_0$, and this will serve as the entire frame's root.

(ii) We will have countably many $\nabla$-trees. This need comes from a similar consideration to (i). Let $k$ be a node-theory in a $\nabla$-tree. Then, since $\vdash_{\NSF} \neg B \lor \neg \neg B$ for all $B$, $\neg \neg (Q \to R) \in k$ ($Q, R$ atomic) may be the case. Also, since $\vdash_{\NSF} \wneg \wneg (B \lor \wneg B)$, $k \cup \{ (Q \to R) \lor \wneg (Q \to R) \} \not \vdash_{\NSF} \bot$. Therefore there will (according to our construction) be a direct successor $k'$ of $k$ such that $(Q \to R) \lor \wneg (Q \to R) \in k'$. So $\wneg (Q \to R) \in k'$ may be the case in addition to $\neg \neg (Q \to R) \in k'$. So, similarly to (i) above, we must prepare a node-theory containing an implicational $Q \to R$ outside of the $\nabla$-tree in question, with its own $\nabla$-tree. This process repeats countably many times, since extending a theory involves a new language. So we will place $\nabla$-trees $\nabla_0, \nabla_1, ...$ horizontally and connect them by the root $\Gamma'_{\GN}$ below, letting $\Gamma'$ be $\nabla_0$'s root.

Thus we are led to the following description of the frame. For each $i \in \mathbb{N}$, we call $r_i$ the root of the $i$-th $\nabla$-tree, i.e. $\nabla_i$; and we call the entire frame's root $r_*$. We denote each node in $\nabla_i$ by a pair $\langle i, \vec{n} \rangle$, where $\vec{n} \in \mathbb{N}^{< \omega}$: e.g., the root $r_i$ is $\langle i, \langle \rangle \rangle$. Let us write $\dot{\nabla}_i$ for $\{ \langle i, \vec{n} \rangle \mid \vec{n} \in \mathbb{N}^{< \omega }\}$, which is the underlying set of $\nabla_i$. The frame we will be constructing is $\langle K, \leq \rangle$ where
\begin{itemize}
  \item $K = \{ r_*, \bigcup_{i \in \mathbb{N}} \dot{\nabla}_i \}$,
  \item $k \leq k'$ iff (i) $k = r_*$ or (ii) $k = \langle i, \vec{m} \rangle$ and $k' = \langle i, \vec{n} \rangle$ for some $i$ and $\vec{m} \sqsubseteq \vec{n}$,
  \item $k \leq k'$ implies $k \subseteq k'$ as sets and
  \item $r_* = \Gamma'_{\GN}$ and $r_0 = \Gamma'$.
\end{itemize}

\ctikzfig{Figure1}


(iii) $r_*, r_1, r_2, ...$ will be prime GN theories; the rest will be prime theories. Suppose e.g. $\wneg P, \neg \neg P \in k$, and let us consider how we should construct a node-theory containing $P$. Defining it to be $l = k \cup \{ B \mid \neg \neg B \in k \}$ does not succeed. For $\wneg P \in k$ implies $\neg \neg \wneg P \in k$, resulting in that $P, \wneg P \in l$, a contradiction. So we will avoid incorporating all such $B$ in bulk. Instead, we let some prime GN theory $\Lambda$ with $k_{\GN} \subseteq \Lambda$ be a new root $r_i$, and simply construct a $\nabla$-tree of prime theories above it, while enumerating all pairs $\langle \sigma, \tau \rangle$ such that $\Lambda \cup \{ \sigma \} \not \vdash_{\NSF} \tau$. This way, for each $B$ with $\neg \neg B \in k$, a suitable node-theory is obtained: since $\neg \neg B \in k$ implies $\Lambda \cup \{ B \} \not \vdash_{\NSF} \bot$, $\langle B, \bot \rangle$ is picked up. We note that this entails that for any $k \in K$, some $r_i$ must be constructed later.

(iv) Each node's direct successors should be constructed in order. As we construct a $\nabla$-tree of prime theories above an $r_i$, suppose that we construct a node-theory $k$'s two direct successors $k_1$ and $k_2$ independently, just as in the case of $\IQC$. Then $\neg B \in k_1$ and $\neg \neg B \in k_2$ for some $B \in \ClForm[\mathcal{L}_{\vert k \vert +1}]$ may happen, since $\vdash_{\NSF} \neg B \lor \neg \neg B$ and $B$ is taken from a new language. To avoid this, we will inductively construct all direct successors of $k$ in a well-ordering $\preccurlyeq$\footnote{
Not to be confused with $\preceq$ for a generation order in section \ref{section: A prevalent model as an intuitionistic node}.
}.
Accordingly, each direct successor $k_n$ will be a prime theory in a distinct language $\mathcal{L}(k_n)$, as opposed to the case of $\IQC$; and the languages will be increasing in the same order $\preccurlyeq$. Also, we enumerate all pairs $\langle \sigma, \tau \rangle$ of the closed formulas in $\mathcal{L}(k)$ such that $k \cup \{ \sigma \} \not \vdash_{\NSF} \tau$. As we construct the direct successors, we carry over the set $\Lambda$ of all GN formulas so far. Given the node-theories until, say, $k_1$, we extend the left side of $k \cup \{ \sigma \} \cup \Lambda \not \vdash_{\NSF} \tau$, which is implied by $k \cup \{ \sigma \} \not \vdash_{\NSF} \tau$ by lemma \ref{lemma: GN extension lemma}, to obtain the next node-theory. This way, $\neg \neg B \in k_2$ is precluded, if $\neg B \in k_1$ and $k_1 \prec k_2$.

(v) We will construct the node-theories moving between the $\nabla$-trees back and forth. Suppose a global $\forall x B$ is in a $k \in K$. Then, for any $l \in K$, $c \in \ClTerm[\mathcal{L}(l)]$ and $k' \geq k$, there must be a $k'' \geq k'$ such that $k'' \models B(c)$, in order for the truth lemma to hold. This entails that for any $k', l \in K$, there must be a node-theory $k''$ above $k'$ whose language $\mathcal{L}(k'')$ contains all constants of $\mathcal{L}(l)$. So we will construct the node-theories in a way that, for any $i, p \in \mathbb{N}$, after constructing all nodes $\langle i, \vec{n} \rangle$ with $\vert \vec{n} \vert = p$, we revisit each $\nabla$-tree so far (i.e., each $\nabla_j$ with $j < i$) and construct new nodes for them, above the existing ones\footnote{
Negation will partly require this construction also.
}.

With these considerations, we set a well-ordering $\preccurlyeq$ on $\bigcup_{i \in \mathbb{N}} \dot{\nabla}_i$ and use it as the order of node-theory construction and language-inclusion. It must meet the following requirements.
\begin{itemize}
  \item [(1)] For each $k \in \bigcup_{i \in \mathbb{N}} \dot{\nabla}_i$, there is an $i \in \mathbb{N}$ such that $k \preccurlyeq r_i$ (for iii).
  \item [(2)] $k \preccurlyeq l$ implies $k_{\GN} \subseteq l_{\GN}$ (for iv).
  \item [(3)] For any $k, l$ and $k' \geq k$, there is a $k'' \geq k'$ such that $l \preccurlyeq k''$ (for v).
\end{itemize}
As a concrete relation, we define $\preccurlyeq$ by combining the lexicographic order (`lex', henceforth) with $\leq_{\mathbb{N}^2}$, the finite part of the canonical well-ordering on the ordinal pairs (cf. \cite[pp.30-1]{Jech2002}), as follows.
\begin{definition}[$\leq_{\mathbb{N}^2}$, $\preccurlyeq$]\label{definition: Well-ordering preceq}
  (i) Define $<_{\mathbb{N}^2} \subseteq (\mathbb{N} \times \mathbb{N})^2$ by $\langle m, n \rangle <_{\mathbb{N}^2} \langle p, q \rangle$ iff (1) $\max \{ m, n \} < \max \{ p, q \}$ or (2) $\max \{ m, n \} = \max \{ p, q \}$ and $m < p$ or (3) $\max \{ m, n \} = \max \{ p, q \}$, $m = p$ and $n < q$.
  
  (ii) Define $\prec \subseteq (\mathbb{N} \times \mathbb{N}^{< \omega})^2$ by $\langle i, \vec{m} \rangle \prec \langle j, \vec{n} \rangle$ iff (1) $\langle i, \vert \vec{m} \vert \rangle <_{\mathbb{N}^2} \langle j, \vert \vec{n} \vert \rangle$ or (2) $\langle i, \vert \vec{m} \vert \rangle = \langle j, \vert \vec{n} \vert \rangle$ and $\vec{m}$ is less than $\vec{n}$ with respect to lex.
\end{definition}

\ctikzfig{Figure2}


\noindent $\preccurlyeq$ thus defined is a well-ordering, since lex is a well-ordering on the finite sequences with the same length. We denote $\langle m, n \rangle$'s direct successor in terms of $\leq_{\mathbb{N}^2}$ by $S_{\mathbb{N}^2}(m, n)$; and $\langle i, \vec{m} \rangle$'s direct successor in terms of $\preccurlyeq$ by $S_{\preccurlyeq}(i, \vec{m})$.

If $\preccurlyeq$ is considered in the context of a tree $\langle \{ r_*, \bigcup_{i \in \mathbb{N}} \dot{\nabla}_i \}, \leq \rangle$, then $\vec{m} \sqsubseteq \vec{n}$ implies $\langle i, \vec{m} \rangle \preccurlyeq \langle i, \vec{n} \rangle$ for all $i$, and requirements (1) and (3) above are satisfied. So we only need to construct our frame $\langle K, \leq \rangle$, with $r_*, r_1, r_2, ...$ being prime GN theories and the rest prime theories, so that (2) is satisfied.

With this ordering $\preccurlyeq$, each node-theory $r_{i+1}$ ($i \in \mathbb{N}$) will be defined to be $\bigcup_{k \prec r_{i+1}} k_{\GN}$, the set of all $\GN$ formulas so far\footnote{
This is not circular. `$k \prec r_{i+1}$' refers to $r_{i+1}$ as a node, not as a set of formulas.
}.
We will set $\mathcal{L}(r_{i+1})$ to be the union of all languages so far. The following guarantees that $r_{i+1}$ will then be a consistent prime GN theory, if the hypotheses of the induction are given.
\begin{lemma}\label{lemma: ri is a prime GN theory}
  Suppose each $k \prec r_{i+1}$ is a consistent prime theory or a consistent prime $\GN$ theory in $\mathcal{L}(k)$; and for all $k, k' \prec r_{i+1}$, if $k \preccurlyeq k'$ then $k_{\GN} \subseteq k'_{\GN}$. Then $\bigcup_{k \prec r_{i+1}} k_{\GN}$ is (i) consistent, and (ii) a prime $\GN$ theory in $\mathcal{L}(r_{i+1})$.
\end{lemma}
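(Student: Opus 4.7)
The plan is to exploit two features: $\preccurlyeq$ is a well-ordering (so any finite subset of $\{k \prec r_{i+1}\}$ has a $\preccurlyeq$-maximum), and the monotonicity hypothesis $k \preccurlyeq k' \Rightarrow k_{\GN} \subseteq k'_{\GN}$ lets us ``gather'' any finite piece of $\bigcup_{k \prec r_{i+1}} k_{\GN}$ into a single member of the union. In every clause below, I would pick a suitable $\preccurlyeq$-maximum $k^*$ among the finitely many node-theories contributing to the situation, and then read off the conclusion from the primality (or GN-primality) of $k^*$ combined with $(k^*)_{\GN} \subseteq k^*$.

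For (i), suppose toward contradiction $\bigcup_{k \prec r_{i+1}} k_{\GN} \vdash_{\NSF} \bot$. A derivation is finite, so it uses formulas $N_1,\dots,N_m$ lying in $(k_1)_{\GN},\dots,(k_m)_{\GN}$ for finitely many $k_j \prec r_{i+1}$. Take $k^* := \max_{\preccurlyeq}\{k_1,\dots,k_m\}$. By the monotonicity hypothesis each $N_j \in (k^*)_{\GN} \subseteq k^*$, so $k^* \vdash_{\NSF} \bot$, contradicting the consistency of $k^*$.

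For (ii), I would check the three clauses of Definition \ref{definition: Prime GN theory}. Closure under $\vdash_{\NSF}$: given $\bigcup_{k \prec r_{i+1}} k_{\GN} \vdash_{\NSF} N$ with $N \in \ClGN[\mathcal{L}(r_{i+1})]$, collect the finitely many $k_j$ supplying the open hypotheses of some derivation, and the finitely many $l_p \prec r_{i+1}$ whose languages supply the constants appearing in $N$; let $k^*$ be the $\preccurlyeq$-maximum of this finite set. Then every $N_j \in (k^*)_{\GN}$ by monotonicity, and $N \in \ClGN[\mathcal{L}(k^*)]$, so $k^* \vdash_{\NSF} N$; since $k^*$ is either a prime theory or a prime $\GN$ theory in $\mathcal{L}(k^*)$, in either case $N \in k^*$, hence $N \in (k^*)_{\GN} \subseteq \bigcup_{k \prec r_{i+1}} k_{\GN}$. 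The disjunction clause and the existence clause are handled in the same style: if $N_1 \lor N_2$ (resp.\ $\exists x N_3$) lies in some $(k^*)_{\GN}$, apply the primality of $k^*$ to obtain $N_1$ or $N_2$ in $k^*$ (resp.\ a witness $a$ in $\mathcal{L}(k^*) \subseteq \mathcal{L}(r_{i+1})$ with $N_3[a/x] \in k^*$), and note that the resulting formulas are themselves $\GN$, so they lie in $(k^*)_{\GN} \subseteq \bigcup k_{\GN}$. For the existential clause I would note that whether $k^*$ is a prime theory (in which case Definition \ref{definition: Prime Theory}(iii) gives $C[a/x]$ either alone or together with $E(a)$) or a prime $\GN$ theory, we always get $N_3[a/x] \in k^*$.

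The only mildly delicate step I expect is the closure-under-derivability clause, because $N$ may involve constants from languages $\mathcal{L}(l_p)$ strictly later in $\preccurlyeq$ than those holding the open hypotheses; the fix is simply to throw those $l_p$ into the finite set whose $\preccurlyeq$-maximum we take, which is why the hypothesis that each $k$ is prime \emph{in $\mathcal{L}(k)$} (rather than in some fixed global language) is enough. No other subtlety arises, since no use of Lemma \ref{lemma: GN extension lemma} is needed here — we are merely taking unions of sets already known to be prime in their own languages.
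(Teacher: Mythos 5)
Your proof is correct and is essentially the paper's argument, spelled out in more detail: the paper's own proof of (i) just invokes the well-ordering to locate a node-theory $k$ with $k_{\GN}\vdash_{\NSF}\bot$ (contradicting its consistency) and dismisses (ii) as ``similar'', while you make explicit the underlying mechanism — finiteness of derivations plus the monotonicity of $k\mapsto k_{\GN}$ along $\preccurlyeq$ collapses any finite configuration (hypotheses and constants alike) into a single $k^*$, whose primality then delivers each clause. Your handling of the language issue in the derivability clause and of the two subcases of the existential clause is exactly the right level of care.
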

\begin{proof}
  (i) If $\bigcup_{k \prec r_{i+1}} k_{\GN} \vdash_{\NSF} \bot$, then since $\preccurlyeq$ is a well-ordering, there is a least $k \prec r_{i+1}$ such that $k_{\GN} \vdash_{\NSF} \bot$. This is contradictory. (ii) Similar.
\end{proof}

We note that a minor modification is also needed for the enumeration process in constructing the direct successors. As described in (iv), we enumerate $\langle \sigma, \tau \rangle$ in $\ClForm[\mathcal{L}(k)]$, not in the language of a direct predecessor. But to cover the universal formulas, we need to involve a constant in a direct successor's language. So we will also enumerate all $\forall x B \in \ClForm[\mathcal{L}(k)]$ such that $k \not \vdash_{\NSF} \forall x B$, and alternatingly with the pairs $\langle \sigma, \tau \rangle$, we extend the left side of $k \cup \{ E(a) \land \wneg B(a) \} \not \vdash_{\NSF} \bot$, with $a$ taken from a direct successor's language.

$\blacksquare$ \textit{The construction of the frame} \quad If $k$ is $\langle i, \vec{n} \rangle$, then we write $k^\frown \vec{m}$ to denote $\langle i, \vec{n}^\frown \vec{m} \rangle$. For simplicity, we often omit `consistent'. 

\begin{lemma}[Construction of the tree of theories]\label{lemma: Construction of the tree of theories}
  There is a tree ${\langle K, \leq \rangle}$ of theories described above for which requirement (2) holds.
\end{lemma}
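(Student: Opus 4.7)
I plan to proceed by transfinite recursion along $\preccurlyeq$, defining at each stage both a language $\mathcal{L}(k)$ and the associated theory. The recursion maintains four joint invariants: (a) each node is consistent; (b) $r_*, r_1, r_2, \ldots$ are prime GN theories in their languages while every other node is a prime theory; (c) languages grow monotonically along $\preccurlyeq$; and (d) requirement (2) holds among the nodes built so far. For the base, set $r_* := \Gamma'_{\GN}$ with $\mathcal{L}(r_*) := \mathcal{L}'_{\vert \Gamma}$, which is a consistent prime GN theory by the paragraph preceding Lemma~\ref{lemma: GN extension lemma}, and $r_0 := \Gamma'$, which is already prime by Lemma~\ref{lemma: Extension lemma}.

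The inductive step splits into two cases. If the next node in $\preccurlyeq$ is a root $r_{i+1}$, take $\mathcal{L}(r_{i+1}) := \bigcup_{l \prec r_{i+1}} \mathcal{L}(l)$ and $r_{i+1} := \bigcup_{l \prec r_{i+1}} l_{\GN}$; the invariants on predecessors supply exactly the hypotheses of Lemma~\ref{lemma: ri is a prime GN theory}, which yields (a) and (b) for $r_{i+1}$. Otherwise the next node is $l^\frown \langle n \rangle$ for a unique $\leq$-parent $l$. Choose a fresh countable constant set $M_{l,n}$, put $\mathcal{L}(l^\frown\langle n\rangle) := \bigl(\bigcup_{k \prec l^\frown\langle n\rangle}\mathcal{L}(k)\bigr)(M_{l,n})$, and gather the cargo $\Lambda_n := \bigcup_{m<n}(l^\frown\langle m\rangle)_{\GN}$, the GN formulas from the already-constructed direct siblings. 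Starting from $\Theta_0 := l \cup \Lambda_n$, run an enumeration in alternation of (i) pairs $\langle \sigma,\tau\rangle$ of closed $\mathcal{L}(l)$-formulas with $l\cup\{\sigma\}\not\vdash_{\NSF}\tau$ and (ii) closed $\forall xB \in \mathcal{L}(l)$ with $l\not\vdash_{\NSF}\forall xB$, passing from $\Theta_j$ to $\Theta_{j+1}$ via Lemma~\ref{lemma: Extension lemma}: at an (i)-stage take an extension of $\Theta_j \cup \{\sigma\}$ excluding $\tau$, and at an (ii)-stage pick a fresh $a \in M_{l,n}$ and take an extension of $\Theta_j \cup \{E(a) \land \wneg B(a)\}$ excluding $\bot$. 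Finally set $l^\frown\langle n \rangle := \bigcup_j \Theta_j$; directedness of the union preserves primeness, giving (b).

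The main obstacle is justifying each invocation of Lemma~\ref{lemma: Extension lemma}: the given non-derivabilities are stated relative to $l$ alone, but $\Theta_j$ also carries $\Lambda_n$ together with every witness introduced at earlier stages. This is precisely the role of Lemma~\ref{lemma: GN extension lemma}, applied inductively: the construction keeps each $\Lambda_n$ and each intermediate $\Theta_j$ conservative over $l$'s $\mathcal{L}(l)$-consequences, so the non-derivabilities transfer upward. For the universal case, $l\not\vdash_{\NSF}\forall xB$ gives $l\cup\{E(y)\land\wneg B(y)\}\not\vdash_{\NSF}\bot$ for a fresh variable $y$ by contraposing $(\forall\text{-loc I})$, after which the same conservativity lifts this to the needed consistency of $\Theta_j \cup \{E(a)\land\wneg B(a)\}$. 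Requirement (2) then follows by design: each $l^\frown\langle n\rangle$ contains $\Lambda_n$ and hence all GN formulas from its $\preccurlyeq$-earlier siblings; each $r_{i+1}$ contains every $l_{\GN}$ with $l\prec r_{i+1}$ by definition; and the $<_{\mathbb{N}^2}$-driven interleaving encoded in $\preccurlyeq$ ensures that any two $\preccurlyeq$-comparable nodes have the later one absorbing the earlier's GN part, either through its initial cargo or through a root that has already collected its predecessors' GN formulas.
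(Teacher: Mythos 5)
Your overall architecture matches the paper's: recursion along $\preccurlyeq$, the roots $r_{i+1}$ defined as $\bigcup_{l \prec r_{i+1}} l_{\GN}$ and handled by lemma \ref{lemma: ri is a prime GN theory}, fresh constants per non-root node, and lemma \ref{lemma: GN extension lemma} to transfer non-derivability from $l$ to the enlarged premise set. But there are two concrete defects. First, your cargo $\Lambda_n := \bigcup_{m<n}(l^\frown\langle m\rangle)_{\GN}$ collects GN formulas only from the earlier \emph{direct siblings}, whereas requirement (2) is a statement about \emph{all} $\preccurlyeq$-comparable pairs, and $\preccurlyeq$ is total on $\bigcup_i \dot{\nabla}_i$. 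Because of the $\leq_{\mathbb{N}^2}$-interleaving, a node such as $\langle 0, \langle 0,0\rangle\rangle$ is constructed \emph{after} the children of $r_1$, so it must contain, e.g., $\langle 1,\langle 0\rangle\rangle_{\GN}$; neither its parent (built before $r_1$ existed) nor its siblings supply this. Since every prime theory decides $\neg C$ versus $\neg\neg C$ for every closed $C$ in its language, two nodes that do not share their GN parts can decide such disjunctions incompatibly, and both requirement (2) and the $(\neg)$ clause of the truth lemma collapse. The cargo must be $\bigcup_{k'\prec l^\frown\langle n\rangle} k'_{\GN}$ over all $\preccurlyeq$-predecessors, as in the paper's $\Lambda(k^\frown\langle n\rangle)$.

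Second, your construction of a single child $l^\frown\langle n\rangle$ runs through the \emph{entire} enumeration of pairs $\langle\sigma,\tau\rangle$ and universal formulas, adding each $\sigma$ (resp.\ each $E(a)\land\wneg B(a)$) to one increasing chain $\Theta_0\subseteq\Theta_1\subseteq\cdots$ and setting $l^\frown\langle n\rangle := \bigcup_j\Theta_j$. This cannot work: for an atom $P$ undecided by $l$, both $\langle P,\bot\rangle$ and $\langle\wneg P,\bot\rangle$ occur in the enumeration, so the union is inconsistent; moreover, an extension excluding $\tau_j$ at stage $j$ need not still exclude $\tau_j$ after later stages. The point of giving $l$ countably many children is that \emph{each} child handles exactly \emph{one} enumerated item (the paper's $n=2q$ and $n=2q+1$ split), which is what the $(\to)$ and $(\forall\text{-loc})$ clauses of the truth lemma require: for each consistent $\langle\sigma,\tau\rangle$ there must exist \emph{some} successor containing $\sigma$ and excluding $\tau$, not one successor doing all of them at once. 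With these two repairs your argument becomes the paper's proof.
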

\begin{proof}
  We will construct $K = \{ r_*, \bigcup_{i \in \mathbb{N}} \dot{\nabla}_i \}$ by induction in the order of $\preccurlyeq$ (definition \ref{definition: Well-ordering preceq}), after defining $r_*$ and $r_0$. $r_0$ is the initial element of the induction. We will associate (and identify for simplicity) every node $k \in K$ with a prime theory or a prime GN theory in the language $\mathcal{L}(k)$ defined below.
  
  Consider $\mathcal{L}'(M^\nabla)$ for some countable $M^\nabla$ that is disjoint with $\mathcal{L}'$'s constants. We assume $M^\nabla$ is partitioned by $K$'s non-root elements $K \backslash \{ r_*, r_0, r_1, ... \}$ in the sense that a set $M^\nabla(k)$ is defined for each $k \in K$ so that
  \begin{itemize}
    \item $M^\nabla(r_*) = M^\nabla(r_i) = \emptyset$ for each $i \in \mathbb{N}$,
    \item $M^\nabla(i, \vec{n})$ is countable for each $i$ and nonempty $\vec{n}$,
    \item $M^\nabla = \bigcup_{k \in K} M^\nabla(k)$, and
    \item $M^\nabla(i, \vec{m})$ and $M^\nabla(j, \vec{n})$ are disjoint, if $\vec{m}$ and $\vec{n}$ are nonempty.
  \end{itemize}
  For each $k \in K$, we define $\mathcal{L}(k)$ to be $\mathcal{L}'(\bigcup_{k' \preccurlyeq k} M^\nabla(k'))$.
  
  Now let us construct the theories. We will follow the order $\leq_{\mathbb{N}^2}$ in the larger scale. In the smaller scale, we construct the nodes $\langle i, \vec{n} \rangle$ with a fixed $\vert \vec{n} \vert$ following lex.
  
  \begin{enumerate}
    \item Let $r_* = \Gamma'_{\GN}$ and $r_0 = \Gamma'$ as described.
    \item Suppose for some $i, p \in \mathbb{N}$, all the nodes $\langle i, \vec{n} \rangle$ with $\vert \vec{n} \vert = p$ are defined to be prime theories or prime GN theories such that for all $k, k'$ so far defined, $k \preccurlyeq k'$ implies $k_{\GN} \subseteq k'_{\GN}$. We will construct the nodes $\langle j, \vec{m} \rangle$ with $\langle j, \vert \vec{m} \vert \rangle = S_{\mathbb{N}^2}(i, p)$. For any node $k \in K$, if all $k' \prec k$ are defined (as sets of formulas), then we write $\Lambda(k)$ for $\bigcup_{k' \prec k} k'_{\GN}$.
    \begin{enumerate}
      \item If $\vert \vec{m} \vert = 0$, then $j = i+1$. Define the only one node-theory $\langle i+1, \langle \rangle \rangle$, i.e. $r_{i+1}$, to be $\Lambda(r_{i+1})$. This is a prime GN theory in $\mathcal{L}(r_{i+1})$ by lemma \ref{lemma: ri is a prime GN theory}.
      \item Suppose $\vert \vec{m} \vert \neq 0$, and let $k$ denote $\langle j, \langle 0, ..., 0 \rangle \rangle$, where the length of $\langle 0, ..., 0 \rangle$ is $\vert \vec{m} \vert - 1$. We note that $k^\frown \langle 0 \rangle$ is the least of $\{ \langle j, \vec{m} \rangle \mid \langle j, \vert \vec{m} \vert \rangle = S_{\mathbb{N}^2}(i, p) \}$ according to lex. The node-theory $k$ is defined already, since $k \sqsubset k^\frown \langle 0 \rangle$. We consider two sequences.
      \begin{enumerate}
	\item Let $\langle \langle \sigma_q, \tau_q \rangle \rangle_{q \in \mathbb{N}}$ enumerate all the pairs $\langle \sigma, \tau \rangle$ in $\ClForm[\mathcal{L}(k)]$ such that $k \cup \{ \sigma \} \not \vdash_{\NSF} \tau$. 
	\item Let $\langle (\forall x B)_q \rangle_{q \in \mathbb{N}}$ enumerate all the universal $\forall x B \in \ClForm[\mathcal{L}(k)]$ such that $k \not \vdash_{\NSF} \forall x B$.
      \end{enumerate}
      Inductively construct $k$'s successors $k^\frown \langle n \rangle$ to be prime theories in $\mathcal{L}(k^\frown \langle n \rangle)$ ($n \in \mathbb{N}$) as follows.
      \begin{enumerate}
	\item If $n = 2q$ for some $q \in \mathbb{N}$, then confirm that $k \cup \{ \sigma_q \} \cup \Lambda(k^\frown \langle n \rangle) \not \vdash_{\NSF} \tau_q$. For, $\Lambda(k^\frown \langle n \rangle)$ is a prime GN theory in $\mathcal{L}( \bigcup_{l \prec \langle k^\frown \langle n \rangle \rangle} l)$; and hence the left side is a GN extension of $k \cup \{ \sigma_q \}$. So extend the left side excluding $\tau_{q}$ to obtain $k^\frown \langle n \rangle$.
	\item If $n = 2q+1$ for some $q \in \mathbb{N}$, then take an $a \in M^\nabla(k^\frown \langle n \rangle)$.
	\begin{itemize}
	  \item If $x$ of $(\forall x B)_q$ is not occurring only globally in $B$, confirm that $k \cup \{ E(a) \land \wneg B(a)) \} \not \vdash_{\NSF} \bot$. For, otherwise we would have $k \vdash_{\NSF} (\forall x B)_q$. So extend the left side excluding $\bot$ to obtain $k^\frown \langle n \rangle$.
	  \item If $x$ is occurring only globally, do the same dropping $E(a)$.
	\end{itemize}
      \end{enumerate}
      Given $k^\frown \langle n \rangle$ defined for all $n$, consider $S_{\preccurlyeq}(k)$ and do the same to construct its direct successors $(S_{\preccurlyeq}(k))^\frown \langle n \rangle$ for all $n$. Repeat this process to construct all $\langle j, \vec{m} \rangle$ with $\langle j, \vert \vec{m} \vert \rangle = S_{\mathbb{N}^2}(i, p)$: in general, given $\langle j, \vec{m'}^\frown \vec{n'} \rangle$ defined for all $\vec{n'}$ with $\vert \vec{m'}^\frown \vec{n'} \vert = p$, proceed to constructing each $(S_{\preccurlyeq}(j, (\vec{m'})))^\frown \vec{n'}$.
    \end{enumerate}
  \end{enumerate}
  
  Then plainly requirement (2) holds by construction.
\end{proof}

$\blacksquare$ \textit{The definition of the model and the truth lemma} \quad On the tree $\langle K, \leq \rangle$ thus obtained, we define $W = {\langle K, \leq, D, J, v \rangle}$ as follows. $D := \ClTerm[\mathcal{L}'(M^\nabla)]$. Inductively, for each constant $a$ in $\mathcal{L}'(M^\nabla)$, $J^1(a) := a$; for each $d \in D$, $J^1(\overline{d}) := d$; for each $f \in \Func$, $(J^2 (f)) (d) := f(d)$ for all $d \in D$; and for each $f(c) \in \ClTerm[\mathcal{L}(D)]$, $J^1( f(c) ) := (J^2(f)) (J^1(c))$.  For any $P \in \Pred$ and $k \in K$, $\langle d \rangle \in P^{v(k)}$ iff $P(d) \in k$. Then, $v$ is closed upwards, since $k \leq k'$ implies $k \subseteq k'$. The strictness condition is satisfied, since each $k$ is closed under derivation; and the finite verification condition holds, since only finitely many predicates occur in $\Gamma$. Therefore $W \in \mathcal{W}$.
\begin{lemma}[Truth lemma]\label{lemma: Truth lemma}
  (i) For any $i \in \{ * \} \cup \mathbb{N}^+$ and $N \in \ClGN[\mathcal{L}(r_i)]$, $N \in r_i$ iff $r_i \models N$. (ii) For any $k \in K \backslash \{ r_*, r_1, r_2, ...\}$ and $B \in \ClForm[\mathcal{L}(k)]$, $B \in k$ iff $k \models B$.
\end{lemma}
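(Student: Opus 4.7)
The plan is a single simultaneous induction on the complexity of the formula, as the forcing clauses for negation and universal quantification range over all of $K$ and thereby weave roots and non-roots together. For atomic $P(\vec d)$ at a non-root $k$, the equivalence is immediate from the definition of $v$ (the strictness condition handles the $E$-obligation); at each GN root $r_i$, the only GN atom $\bot$ is excluded by consistency. Conjunction, disjunction, and existentials fall out from closure under derivation together with primality (definitions \ref{definition: Prime theory} and \ref{definition: Prime GN theory}), with existentials using clause (iii) to produce the witness.

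For $A \to B$ at a non-root $k$, the forward direction combines $\to$-elimination, which yields $\wneg \wneg B \in k'$ whenever $A \in k' \geq k$, with the enumeration at $k'$: since $k' \not\vdash \wneg B$, the pair $\langle B, \bot \rangle$ occurs among $\langle \sigma_q, \tau_q \rangle$, producing a direct successor $k''$ with $B \in k''$, so the inductive hypothesis gives $k'' \models B$. The backward direction notes that $A \to B \notin k$ forces $k \cup \{A, \wneg B\}$ to be consistent, so the pair $\langle A \land \wneg B,\, \bot \rangle$ yields a direct successor $k'$ carrying $A$ and $\wneg B$; persistence of $\wneg B$ then excludes $B$ from every $k'' \geq k'$. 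Universals combine the analogous enumeration at step (b)(iii)($\beta$) of lemma \ref{lemma: Construction of the tree of theories} with requirement (3) of $\preccurlyeq$: for any constant $d \in D$ drawn from an arbitrary $\mathcal{L}(l)$, requirement (3) furnishes a $k''' \geq k'$ with $l \preccurlyeq k'''$, putting $d$ into $\mathcal{L}(k''')$ so that the implication-style argument of the previous step applies; the local variants additionally carry $E(d)$, which is precisely what the construction enumerates.

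Negation is the main obstacle. For the $\Rightarrow$ direction I must show $\neg A \in k$ implies $l \not\models A$ for every $l \in K$. The linchpin is that $A \vdash_{\NSF} \neg\neg A$, so any $A \in l$ automatically deposits the GN companion $\neg\neg A$ into $l_{\GN}$. Requirement (2) of $\preccurlyeq$ then transports GN-information forward in each direction: if $k \preccurlyeq l$, then $\neg A \in k_{\GN} \subseteq l_{\GN} \subseteq l$ and $l$ would be inconsistent; if $l \prec k$, then $\neg \neg A \in l_{\GN} \subseteq \Lambda(k) \subseteq k$, which together with $\neg A \in k$ makes $k$ inconsistent. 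The boundary cases involving $r_*$ are handled by $r_* \leq k'$ for every $k' \in K$ and the resulting set-inclusion. For $\Leftarrow$, if $\neg A \notin k$ then $\vdash_{\NSF} \neg A \lor \neg\neg A$ (lemma \ref{lemma: Basic facts of NSF}(iv)) and primality give $\neg\neg A \in k$, whence $\neg\neg A \in r_j$ for any $r_j$ with $k \preccurlyeq r_j$; a side argument using ($\neg$ I$_2$) and the GN structure of $r_j$ shows $r_j \not\vdash \wneg A$, so the pair $\langle A, \bot \rangle$ is enumerated at $r_j$ and a direct successor $k^*$ satisfies $A \in k^*$, giving $k^* \models A$ by the inductive hypothesis and thus $k \not\models \neg A$. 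When $l$ is a GN root and $A$ a non-GN formula, a secondary induction on $A$ computes $r_j$'s forcing from the already-handled subformulas, using that $r_j$ carries no atomic information and that $\neg A$ propagates to $r_j$ to rule out the pertinent successor configurations.

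The principal difficulty, and the conceptual payoff of the multi-$\nabla$-tree construction, is precisely that $\neg A$ must suppress $A$ even at nodes $l$ constructed before $k$: monotonicity of $k_{\GN}$ under $\preccurlyeq$ alone only propagates forward, so the GN echo $\neg\neg A$ induced by the mere presence of $A$ is what carries the restrictive information backward through $\Lambda$, while monotonicity itself carries $\neg A$ forward. The delicate interplay between $\leq$ (set-inclusion along a $\nabla$-tree) and $\preccurlyeq$ (the order of construction), together with the observation that $r_*$ sits below everything in $\leq$, is what closes both directions of the negation case.
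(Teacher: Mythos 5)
Your overall architecture — a single simultaneous induction, with atoms, $\land$, $\lor$, $\exists$ handled by primality and closure under derivation, $\to$ and the universal clauses handled by the enumeration of pairs $\langle\sigma_q,\tau_q\rangle$ and of $(\forall xB)_q$ together with requirement (3), and the backward negation direction handled by $\vdash_{\NSF}\neg A\lor\neg\neg A$, requirement (1), and the consistency of $r_j\cup\{A\}$ via ($\neg$ I$_2$) — matches the paper's proof closely and is correct in those cases.

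The gap is in your forward negation case, specifically the subcase $l\prec k$. You want to pass from $l\models A$ to $A\in l$ in order to deposit the ``echo'' $\neg\neg A$ into $l_{\GN}$ and transport it forward to $k$ via $l_{\GN}\subseteq\Lambda(k)\subseteq k$. But the induction hypothesis (ii) only equates membership and forcing for formulas in $\ClForm[\mathcal{L}(l)]$, and when $l\prec k$ the formula $A\in\ClForm[\mathcal{L}(k)]$ may contain constants from $M^\nabla(k')$ for $l\prec k'$, so $A\notin\ClForm[\mathcal{L}(l)]$ and indeed $A\notin l$ by definition, even though $l\models A$ is perfectly possible for complex $A$. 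The same problem afflicts $l=r_*$ and $l$ a GN root with $A$ non-GN, which is why your ``secondary induction'' there is not a real repair. The paper avoids the case split entirely: given any $l$ with $l\models A$, requirement (3) supplies a non-root $l'\geq l$ with $k\preccurlyeq l'$; persistence gives $l'\models A$, the language inclusion $\mathcal{L}(k)\subseteq\mathcal{L}(l')$ legitimises the induction hypothesis to get $A\in l'$, and requirement (2) puts $\neg A\in l'_{\GN}$, contradicting consistency. So only the forward direction of the GN-monotonicity is ever needed; the backward transport via $\neg\neg A$ is both unnecessary and, as written, unobtainable. Your proof goes through once the $l\prec k$ (and root) subcases are replaced by this ``move up along $\leq$ to a $\preccurlyeq$-successor of $k$'' step.
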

\begin{proof}
  (i, ii) Simul-induction on $N$ and $B$. (i)'s base case is trivial. (ii)'s base case holds by definition of $W$.
  
  (i) Induction step. Recall that $\vdash_{\NSF} N \lor \neg N$ (lemma \ref{lemma: Basic facts of NSF}, v). $r_*$'s case will be similar to $r_i$'s with $i \in \mathbb{N}^+$.
  \begin{enumerate}
    \item [($\to$)] ($\implies$) Use $\{ N_1 \to N_2, N_1 \} \vdash_{\NSF} N_2$. ($\impliedby$) Contraposition. If $N_1 \to N_2 \notin r_i$, then $N_1 \in r_i$ and $\neg N_2 \in r_i$. Therefore $N_2 \notin k$ for any $k \geq r_i$, and hence $r_i \not \models N_1 \to N_2$.
    \item [($\neg$)] ($\implies$) Suppose $\neg C \in r_i$ with $C \in \ClForm[\mathcal{L}(r_i)]$, $k \in K$ and $k \models C$. Then, by requirement (3), take a $k'$ such that $k \leq k'$ and $r_i \preccurlyeq k'$. Then $C \in k'$ by (ii)'s hypothesis. This contradicts $\neg C \in k'_{\GN}$ by requirement (2). ($\impliedby$) Contraposition. Suppose $\neg C \notin r_i$. Then $r_i \cup \{ C \} \not \vdash_{\NSF} \bot$, and hence $\langle C, \bot \rangle$ is $\langle \sigma_n, \tau_n \rangle$ for some $n$. Therefore by construction (clause ii, b, \textit{i}), there is a direct successor $k'$ of $r_i$ according to $\leq$ such that $C \in k'$, implying $r_i \not \models \neg C$ by (ii)'s hypothesis. (In $r_*$'s case, since $\neg \neg C \in r_*$, $r_1 \cup \{ C \} \not \vdash_{\NSF} \bot$.)
    \item [($\forall$-glo)] ($\implies$) Suppose $d \in D$, $k \geq r_i$, and let $l$ be such that $d \in \ClTerm[\mathcal{L}(l)]$. Then by requirement (3), there is a $k' \geq k$ such that $l \preccurlyeq k'$. Use $\forall x N_1 \vdash_{\NSF} N_1[c/x]$ for all $c \in \ClTerm[\mathcal{L}(D)]$. ($\impliedby$) Contraposition. If $\forall x N_1 \notin r_i$, then $\neg \forall x N_1 \in r_i$. Here, $\neg \exists x \neg N \vdash_{\NSF} \forall x N$ in general, implying $\neg \forall x N \vdash_{\NSF} \exists x \neg N$. So there is a constant $a \in \mathcal{L}(r_i)$ such that $\neg N_1[a/x] \in r_i$. Therefore $k' \not \models N_1 [a/x]$ for any $k' \geq r_i$.
    \item [($\exists$-glo)] $r_i$ is a prime GN theory.
  \end{enumerate}
  
  (ii) Induction step. 
  \begin{enumerate}
    \item [($\to$)] ($\implies$) Suppose $C_1 \to C_2 \in k$, $k \leq k'$ and $k' \models C_1$. Then $\wneg \wneg C_2 \in k'$, and hence $k' \cup \{ C_2 \} \not \vdash_{\NSF} \bot$. So $\langle C_2, \bot \rangle$ is $\langle \sigma_n, \tau_n \rangle$ for some $n$. Therefore by construction (clause ii, b, \textit{i}), there is a direct successor $k''$ of $k'$ such that $C_2 \in k''$. ($\impliedby$) Contraposition. If $C_1 \to C_2 \notin k$, then $k \cup \{ C_1, \wneg C_2 \} \not \vdash_{\NSF} \bot$. So $\langle C_1 \land \wneg C_2, \bot \rangle$ is $\langle \sigma_n, \tau_n \rangle$ for some $n$. Look at the direct successor $k'$ of $k$ such that $C_1 \land \wneg C_2 \in k'$. Then $k' \models C_1$ and $k'' \not \models C_2$ for any $k'' \geq k'$.
    \item [($\neg$)] ($\implies$) Similar to (i). ($\impliedby$) $\neg C \notin k$ implies $\neg \neg C \in k$. Look at an $r_i \succ k$ by requirement (1).
    \item [($\forall$-loc)] ($\implies$) Similar to ($\to$). 
    ($\impliedby$) Contraposition. Suppose $\forall x C \notin k$. Then $\forall x C$ is $(\forall x B)_q$ for some $q$. Therefore by construction (clause ii, b, \textit{ii}), there is a successor $k'$ of $k$ such that $a \in \mathcal{L}(k')$ and $E(a) \land \wneg C(a) \in k'$. Therefore $k' \models E(a)$ and $k'' \not \models C(a)$ for any $k'' \geq k'$.
  \end{enumerate}
\end{proof}

\begin{proposition}[Strong completeness ($\NSF$)]\label{proposition: Strong completeness NSF}
  For any $\Gamma \subseteq_{\fin} \ClForm[\mathcal{L}]$ and $A \in \ClForm[\mathcal{L}]$, $\Gamma \models_{\mathcal{W}}^V A$ implies $\Gamma \vdash_{\NSF} A$.
\end{proposition}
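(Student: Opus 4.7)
The plan is to prove the contrapositive: assume $\Gamma \not\vdash_{\NSF} A$ and produce a $W \in \mathcal{W}$ together with a node $k \in K$ such that $k \models_W B$ for every $B \in \Gamma$ but $k \not\models_W A$. Since $\Gamma \cup \{A\}$ consists of closed formulas, the note after definition \ref{definition: Semantic consequence} tells us this is exactly what is needed to refute $\Gamma \models_{\mathcal{W}}^V A$.

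First I would apply the Extension lemma (lemma \ref{lemma: Extension lemma}) to obtain a prime theory $\Gamma'$ in $\mathcal{L}'_{\vert \Gamma}$ (for some $\mathcal{L}' = \mathcal{L}(M)$ with $M$ countable) such that $\Gamma \subseteq \Gamma'$ and $\Gamma' \not\vdash_{\NSF} A$. Next I would invoke lemma \ref{lemma: Construction of the tree of theories} to build the tree $\langle K, \leq \rangle$ of node-theories satisfying requirement (2), with $r_* = \Gamma'_{\GN}$ and $r_0 = \Gamma'$. The frame is then fleshed out into the model $W = \langle K, \leq, D, J, v \rangle$ precisely as laid out in the paragraph preceding lemma \ref{lemma: Truth lemma}: set $D := \ClTerm[\mathcal{L}'(M^\nabla)]$, let $J$ interpret each term as itself, and put $\langle d \rangle \in P^{v(k)}$ iff $P(d) \in k$. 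The remarks there already confirm that $v$ is monotone (by requirement (2) and the inclusions $k \subseteq k'$ whenever $k \leq k'$), that the strictness condition follows from closure under $(\text{STR}_1)$ and $(\text{STR}_2)$, and that the finite verification condition holds because only finitely many predicate symbols appear in $\Gamma$. Hence $W \in \mathcal{W}$.

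Finally I would apply the Truth lemma (lemma \ref{lemma: Truth lemma}, clause (ii)) at the node $r_0$: since $\Gamma \subseteq \Gamma' = r_0$ and every $B \in \Gamma$ lies in $\ClForm[\mathcal{L}(r_0)]$, we get $r_0 \models_W B$ for every $B \in \Gamma$, i.e. $r_0 \models_W \bigwedge \Gamma$. On the other hand, $\Gamma' \not\vdash_{\NSF} A$ together with primeness (closure under derivation) gives $A \notin \Gamma'$, so by clause (ii) again $r_0 \not\models_W A$. This contradicts $\Gamma \models_{\mathcal{W}}^V A$, completing the proof.

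The main obstacle has in fact been absorbed into the preparatory lemmas: the delicate combinatorics of laying out the countably many $\nabla$-trees under the well-ordering $\preccurlyeq$, and the simultaneous induction for the truth lemma on $\ClGN$ and $\ClForm$, are what make the semantics match the proof system. Once those are in hand, the completeness proposition is essentially a short bookkeeping argument; the only point that deserves care is the reduction of $\Gamma \models_W^V A$ to the condition ``$k \models \bigwedge \Gamma \Rightarrow k \models A$'', which is safe here because $\Gamma \cup \{A\} \subseteq \ClForm[\mathcal{L}]$ and thus no free-variable bookkeeping from definition \ref{definition: Semantic consequence} intervenes.
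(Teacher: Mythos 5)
Your proposal is correct and follows exactly the paper's route: the paper's own proof of Proposition \ref{proposition: Strong completeness NSF} is the one-line contraposition ``by the preceding lemmas there is a $W \in \mathcal{W}$ with $r_0 \models_W \bigwedge \Gamma$ and $r_0 \not\models_W A$,'' and you have simply spelled out which lemmas (Extension, Construction of the tree, Truth lemma) do the work and how they combine. Nothing is missing.
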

\begin{proof}
  Contraposition. If $\Gamma \not \vdash_{\NSF} A$, then by the preceding lemmas, we can assure there is a $W \in \mathcal{W}$ with $r_0 \models_W \bigwedge \Gamma$, and $r_0 \not \models_W A$.
\end{proof}

\section{Study of prevalence}\label{section: Theory of prevalence}
In this section, we investigate the class of the `prevalent models', those with the `prevalence property': cf. section \ref{section: The conceptual constraints}, and see section \ref{section: Prevalent models} for the formal definition of prevalence. We call `$\SFQp$' the logic exhibited by this class, and show that it has natural extensions of the properties of Yamada \cite{Yamada2023}'s $\SF$. That paper showed that the assertibility in a prevalent model is calculable just as the classical truth-values (cf. \cite[proposition 2.5]{Yamada2023}), and that a formalisation of the bridging principle (\ref{principle: Conceptual identity}) between intuitionism and strict finitism holds with respect to the prevalent models (\cite[proposition 2.32]{Yamada2023}).


Section \ref{section: Prevalent models} collects basic properties of the prevalent models, and in section \ref{section: A complete natural deduction system NSFp} a natural deduction system $\NSFp$, sound and complete with respect to them, is provided. We show in section \ref{section: On interchangeability of negation} that $\neg$ and $\wneg$ are almost interchangeable in $\SFQp$, but not always. $\wneg$ is not needed to capture the theorems of $\NSFp$, and rather $\neg$ is essential to some of them (propositions \ref{proposition: Reducibility to neg} and \ref{proposition: Irreducibility to wneg}). This suggests that the $\neg$-less theorems of $\NSFp$ form an `intermediate' part in $\SFQp$ which can be captured only with intuitionistic connectives with a time-gap.

In section \ref{section: Relation between the neg-less fragment and intermediate logics}, we compare this part with intermediate logics. An intermediate logic is obtained by adding classical theorems to $\IQC$ and taking the closure under Modus Ponens and substitution for formulas. It is called `proper' if it is not classical first-order logic $\CQC$\footnote{
$\SFQp$ (or its $\neg$-less part) is not an intermediate logic, since it lacks Modus Ponens.
}.
In this context, we regard a logic as the set of its theorems. In section \ref{section: Comparison with intermediate logics}, $\CQC$ is shown to correspond to the $\neg$-less part under a simple double-negation translation (proposition \ref{proposition: neg neg A in NSFp is classical}); and we prove the proper intermediate logic we call `$\HTCD$' is embeddable to the part under another double-negation translation (proposition \ref{proposition: HTQ and NSFp}).

In section \ref{section: A prevalent model as an intuitionistic node}, we extend Yamada's ideas (\cite[cf. sections 1.4 and 2.5]{Yamada2023}) and provide a quantificational version of the formalised bridging principle (\ref{principle: Conceptual identity}). Similarly to the case of $\SF$, a set $\mathcal{U}$ of finite prevalent models can induce an intuitionistic model, if arranged by a partial order $\preceq$ of practical verifiability. We call $\langle \mathcal{U}, \preceq \rangle$ a `generation structure': the nodes $W \in \mathcal{U}$ typically represent different generations of the same agent with increasing verificatory power. We modify the semantics of $\SFQ$ to take into account generations, and define a new `generation semantics' to evaluate formulas in a generation structure. In the propositional case, the totality of the valid formulas in all generation structures coincides with intuitionistic propositional logic $\IPC$ (\cite[proposition 2.32]{Yamada2023}). We provide an estimation of the validity in general on the quantificational level in terms of $\IQC$ (proposition \ref{proposition: T Th G and IQC}), and present a class of generation structures whose validity coincides with $\IQC$ (proposition \ref{corollary: Th GC is IQC}). In section \ref{section: Ending remarks: Further topic}, we provide some remarks mainly on these results about the bridging principle, to end this article.

\subsection{Prevalent models}\label{section: Prevalent models}
In this section, we semantically define prevalence and its relevant notions. We see the models with the prevalence lose several distinctions, and the assertibility and the validity in them obey simple characterisations.

Fix a $W = \langle K, \leq, D, J, v \rangle \in \mathcal{W}$ with its root $r$. Prevalence is defined via two `prevalence properties'.
\begin{definition}[Prevalence]\label{definition: Prevalence}
  An $A \in \ClForm[\mathcal{L}(D)]$ is \textit{prevalent in $W$} (notation $\models_W^P A$) if for all $k \in K$, there is a $k' \geq k$ with $k' \models A$. $W$ has the \textit{formula prevalence property} if $\models_W^A A$ implies $\models_W^P A$ for all $A$; and has the \textit{object prevalence property} if $\models_W^P E(\overline{d})$ for all $d \in D$. A \textit{prevalent model} is one with both properties. We denote their class by $\mathcal{W}_\textup{P}$.
\end{definition}
\noindent We note that $\models_W^P A$ iff $\models_W^V \neg \wneg A$; $\not \models_W^P A$ iff $\models_W^V \neg \neg \wneg A$. The formula prevalence is equivalent to $\models_W^V \neg \neg A \to A$; and the object prevalence $\models_W^V \forall x \neg \wneg E(x)$. Also, if $\leq$ is total, then $W$ has the formula prevalence.

Conceptually, prevalence is a stronger notion of actual verifiability than assertibility. While an assertible statement is verified at at least one point in all possible histories, a prevalent statement is verified in the future of any point in each history. In this sense, all histories in a prevalent model are homogeneous: no matter how the agent proceeds in verifying, `what is open to verification' does not change.

One could formally replace the prevalence properties with other properties. The formula prevalence reduces to the \textit{atomic prevalence property} that $\models_W^A P$ implies $\models_W^P P$ for all $P \in \ClAtom[\mathcal{L}(D)]$ (including $E(c)$), which can be characterised in two ways.
\begin{proposition}
  (i) The atomic prevalence implies the formula prevalence. (ii) (a) $W$ has the atomic prevalence iff (b) $\models_W^V \neg \neg P \to P$ for all $P$ iff (c) $\models_W^V (P \to Q) \to ((P \to \neg Q) \to \neg P)$ for all $P$ and $Q$.
\end{proposition}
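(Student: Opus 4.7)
The plan is to establish (i) by induction on $A$ and part (ii) by direct semantic calculations exploiting the fact that, since $\neg P$ is a GN formula, $\neg P$ is either valid or nowhere forced.

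For (i), the base case (atomic $A$) is the hypothesis; $\top$ and $\bot$ are trivial; the case where $A$ is a $\neg B$ reduces to validity via lemma \ref{lemma: GN's global negativity} (such $A$ is GN, so assertibility gives validity). For $A \land B$, assertibility at some $k$ gives both conjuncts assertible hence prevalent by IH, and two prevalent formulas can be witnessed jointly via persistence, yielding prevalence of the conjunction. The cases of $\lor$ and $\exists$ in both modes are handled analogously by picking the assertible disjunct/witness and pushing through IH, using atomic prevalence on $E(\overline{d})$ in the local $\exists$ case. The delicate cases are $\to$ and $\forall$. For $A \to B$ assertible at some $k_0$: either $A$ is nowhere forced, in which case $A \to B$ is vacuously forced everywhere, or $A$ is assertible, so by IH $A$ is prevalent; starting from $k_0$ we then reach a node forcing $A$, and by $k_0 \models A \to B$ we reach a further node forcing $B$, so $B$ is assertible and hence prevalent by IH. But prevalence of $B$ alone forces $A \to B$ at every node, since for any $k' \geq k$ forcing $A$, we can reach a node forcing $B$ by prevalence. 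The universal case parallels this, splitting into the global clause (v')(a) and local clause (v')(b); the local case uses atomic prevalence on $E(\overline{d})$ so that if $E(\overline{d})$ is assertible it is prevalent, and if it is nowhere forced the local condition is vacuous for that $d$.

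For part (ii), the equivalence (a) $\Leftrightarrow$ (b) follows by observing that $\neg P$, being GN, is either valid (iff $P$ is not assertible) or nowhere forced (iff $P$ is assertible), so $\neg\neg P$ is valid iff $P$ is assertible. Hence $\models_W^V \neg\neg P \to P$ unpacks, via the $\to$ clause, to: for every $k'$, if $P$ is assertible then there is $k'' \geq k'$ with $k'' \models P$, which is exactly atomic prevalence. For (a) $\Rightarrow$ (c): fix $k$ and $k' \geq k$ with $k' \models P \to Q$; if $P$ is not assertible, $\neg P$ is valid and $(P \to \neg Q) \to \neg P$ is forced everywhere, so $k'' = k'$ works; if $P$ is assertible, (a) gives $P$ prevalent, and combined with $k' \models P \to Q$ this forces $Q$ to be assertible and hence prevalent, so $\neg Q$ is nowhere forced. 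No successor of $k'$ can then force $P \to \neg Q$ (any witness for $P$ from prevalence would have to lead to $\neg Q$, a contradiction), so $k' \models (P \to \neg Q) \to \neg P$ vacuously. For (c) $\Rightarrow$ (a): if $P$ is assertible but some $k$ has no $k' \geq k$ forcing $P$, pick an atomic $Q$ nowhere forced (available by the finite verification condition, which leaves infinitely many predicates unused at each node); then $\neg Q$ is valid, $k \models P \to Q$ vacuously, and for any $k'' \geq k$ we have $k'' \models P \to \neg Q$ (by the vacuous antecedent in the successor cone or trivially by validity of $\neg Q$). Hence (c) demands a $k'''' \geq k''$ with $k'''' \models \neg P$; but $P$ is assertible, so $\neg P$ is nowhere forced, contradicting (c) at $k$.

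The main technical obstacle will be the $\to$ case of the induction in (i), where I must carefully distinguish ``$A$ unreachable anywhere'' from ``$A$ assertible'' and combine the IH for both $A$ and $B$; the pleasant surprise is that once $B$ is shown prevalent, $A \to B$ is actually forced everywhere. A secondary point of care is, in (c) $\Rightarrow$ (a), to justify that the finite verification condition really guarantees a nowhere-forced atomic $Q$ in $\ClAtom[\mathcal{L}(D)]$, and, in the local quantifier cases of (i), to ensure the substitutions $A[\overline{d}/x]$ respect the induction on formula complexity.
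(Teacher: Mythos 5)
Your argument follows essentially the same route as the paper's (very terse) proof: (i) by induction on the formula, and (ii) via the dichotomy that $\neg P$, being GN, is either valid or nowhere forced; the paper's (c $\Rightarrow$ a) step simply instantiates $Q := P$, whereas you introduce a fresh $Q$. The induction in (i) and the equivalences (a) $\Leftrightarrow$ (b) and (a) $\Rightarrow$ (c) are correct as written.

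The one genuine misstep is in (c) $\Rightarrow$ (a): the claim that the finite verification condition guarantees a nowhere-forced atomic $Q$ is false. That condition only says $\{ P \mid P^{v(k)} \neq \emptyset \}$ is finite \emph{at each node}; over a countably infinite frame the union of these finite sets can exhaust all of $\Pred$ (e.g.\ a chain $k_0 < k_1 < \cdots$ with $P_i$ first forced at $k_i$), and with a one-element domain every closed atom can then be forced somewhere. Fortunately your argument does not need such a $Q$: once you have a node $k$ above which $P$ is never forced, \emph{both} $k \models P \to Q$ and $k'' \models P \to \neg Q$ for every $k'' \geq k$ hold vacuously for an arbitrary atomic $Q$ (you note this alternative justification yourself), so the contradiction with $\models_W^A P$ goes through unchanged — most economically by taking $Q := P$, as the paper does. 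Deleting the appeal to a nowhere-forced $Q$ repairs the proof.
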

\begin{proof}
  (i) Induction. (ii) (a $\implies$ b) Easy. (b $\implies$ c) Use $\models_W^V \neg P \lor \neg \neg P$. (c $\implies$ a) Suppose (c). Contraposition. If $\not \models_W^P P$, then $k \models P \to \neg P$ for some $k$. Since $k \models P \to P$, (c) implies $\models_W^V \neg P$ and hence $\not \models_W^A P$.
\end{proof}
\noindent Further, either one of the two prevalence properties can be weakened (but not both at the same time). We say $W$ has the \textit{atomic decidability property} if $\models_W^V \forall x (P(x) \lor \neg P(x))$ for all $P \in \Atom[\mathcal{L}(D)]$; and the \textit{total constructibility property} if $\models_W^V \forall x \neg \neg E(x)$, in the sense that every object in the domain of discourse is constructed at some step. Plainly, the atomic prevalence implies the former; and the object prevalence the latter.
\begin{proposition}\label{proposition: Weakening the prev conds}
  (i) The atomic prevalence and the total constructibility imply the object prevalence. (ii) The atomic decidability and the object prevalence imply the atomic prevalence.
\end{proposition}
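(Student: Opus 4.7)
For (i), the plan is to unfold total constructibility via the forcing clauses and lemma \ref{lemma: GN's global negativity}. Since $\neg\neg E(x) \in \GN[\mathcal{L}]$, the variable $x$ in $\forall x \neg\neg E(x)$ occurs only globally, so this formula is globally quantified. Unpacking $r \models \forall x \neg\neg E(x)$ via clause (v.a) of definition \ref{definition: Actual verification relation models_W} and the $\top \to (\cdot)$ condition reduces it to $\models_W^V \neg\neg E(\overline{d})$ for each $d \in D$, which by the equivalences recorded just after definition \ref{definition: Validity, assertibility, semantic consequence} is the same as $\models_W^A E(\overline{d})$. The hypothesis of atomic prevalence, applied to the closed atom $E(\overline{d}) \in \ClAtom[\mathcal{L}(D)]$, then promotes assertibility to prevalence for every $d$, which is exactly object prevalence.

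For (ii), fix a closed atom $P \in \ClAtom[\mathcal{L}(D)]$ with $\models_W^A P$; the aim is $\models_W^P P$. Write $P$ as $R(\overline{d_1},\ldots,\overline{d_n})$ using that $J(\overline{d})=d$, so forcing depends only on the interpretations. From $\models_W^A P$ and the forcing clause for $\neg$, the formula $\neg P$ is forced nowhere. Given an arbitrary $k \in K$, I would first iterate object prevalence together with persistence across $d_1,\ldots,d_n$ to obtain some $k' \geq k$ with $k' \models \bigwedge_{i \leq n} E(\overline{d_i})$. Atomic decidability, read as the validity of $\forall \vec{x}(R(\vec{x}) \lor \neg R(\vec{x}))$ and unpacked at $k'$ via the local-quantification clause (v.b), then delivers a $k'' \geq k'$ with $k'' \models R(\overline{d_1},\ldots,\overline{d_n}) \lor \neg R(\overline{d_1},\ldots,\overline{d_n})$. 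Since $\neg P$ is forced nowhere, the right disjunct is excluded, so $k'' \models P$, witnessing $\models_W^P P$.

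The only delicate point I anticipate is the correct bookkeeping of the two quantification modes: the global mode is what lets (i) reduce to statements about individual $\overline{d}$ via the \emph{GN} reading of $\neg\neg E(x)$, whereas in (ii) the local mode is precisely what allows the hypothesis that each $E(\overline{d_i})$ has been reached at $k'$ to be consumed by atomic decidability. The mild freedom in reading atomic decidability for atoms of arity $n \geq 2$—either by iterating the single-variable form or by tacit universal closure—both lead to the argument above once combined with the nonemptiness of $D$ and object prevalence, so I do not expect a genuine obstacle beyond careful tracking of the quantifier clauses.
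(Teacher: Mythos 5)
Your proof is correct, and since the paper's own proof is just ``Easy,'' what you have written is the natural elaboration of the intended argument: for (i) the reduction of $\models_W^V \forall x\,\neg\neg E(x)$ to $\models_W^A E(\overline{d})$ via the global reading of $\neg\neg E(x)$ and the equivalence $\models_W^A A$ iff $\models_W^V \neg\neg A$, and for (ii) the use of object prevalence plus persistence to reach a node satisfying the $E(\overline{d_i})$ antecedents of the (local) decidability instance, after which $\models_W^A P$ rules out the $\neg P$ disjunct. Your bookkeeping of the two quantification modes (global in (i) because $\neg\neg E(x)$ is GN, local in (ii) because $x$ has a non-global occurrence in $P(x)\lor\neg P(x)$) is exactly right.
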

\begin{proof}
  Easy.
\end{proof}
\noindent We choose the combination $\neg \neg A \to A$ ($A \in \Fm$) and $\forall x \neg \neg E(x)$ as the axiomatisation for our proof theory (cf. section \ref{section: A complete natural deduction system NSFp}).

The prevalent models lose several semantic distinctions. If $W$ has the formula prevalence, then the assertibility is equivalent to the prevalence ($\models_W^A A$ iff $\models_W^P A$); $\models_W^A A$ iff $k \models_W A$ for any leaf $k$; and outermost $\neg$ and $\wneg$ are interchangeable ($k \models_W \wneg A$ iff $k \models_W \neg A$). We further investigate the interchangeability of $\neg$ and $\wneg$ in section \ref{section: On interchangeability of negation}.

Assume $W$ further has the object prevalence, and hence $W \in \mathcal{W}_\textup{P}$. Then the two modes of universal quantification merge: i.e. unconditionally, $k \models_W \forall x A$ iff $k \models_W \top \to A[\overline{d}/x]$ for all $d \in D$ iff $\models^A A[\overline{d}/x]$ for all $d \in D$. This yields that assertibility, as well as prevalence, is calculable the same way as the classical truth-values; and the validity conforms to the following scheme.
\begin{proposition}\label{proposition: Assertibility and validity in the prevalent models}
  For any $A, B \in \ClForm[\mathcal{L}(D)]$, the following hold for $W$.
  \begin{enumerate}
    \item (a) $\models^A A \land B$ iff $\models^A A$ and $\models^A B$. (b) $\models^A A \lor B$ iff $\models^A A$ or $\models^A B$. (c) $\models^A A \to B$ iff $\not \models^A A$ or $\models^A B$. (d) $\models^A \neg A$ iff $\not \models^A A$. (e) $\models^A \forall x A$ iff $\models^A A[\overline{d}/x]$ for all $d \in D$. (f) $\models^A \exists x A$ iff $\models^A A[\overline{d}/x]$ for some $d$.
    \item (a) $\models^V A \land B$ iff $\models^V A$ and $\models^V B$. (b) $\models^V A \lor B$ iff $\models^V A$ or $\models^V B$. (c) $\models^V A \to B$ iff $\models^A A \to B$. (d) $\models^V \neg A$ iff $\models^A \neg A$. (e) $\models^V \forall x A$ iff $\models^A \forall x A$. (f) $\models^V \exists x A$ iff $\models^V A[\overline{d}/x]$ for some $d$ for global quantification, and $\models^V E(\overline{d}) \land A[\overline{d}/x]$ for some $d$ for local quantification.
  \end{enumerate}
\end{proposition}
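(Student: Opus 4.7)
The approach is to leverage three features of any $W = \langle K, \leq, D, J, v \rangle \in \mathcal{W}_\textup{P}$ already established in this subsection: the persistence of $\models_W$, the equivalences $\models_W^A A \Leftrightarrow \models_W^P A$ (formula prevalence) and $\models_W^P E(\overline{d})$ for all $d \in D$ (object prevalence), and the merged universal forcing condition $k \models_W \forall x A$ iff $\models_W^A A[\overline{d}/x]$ for all $d \in D$, which is independent of the witness node $k$. Combined with the root-reduction $\models_W^V A \Leftrightarrow r \models_W A$, these let each clause be read off from the forcing conditions almost directly, and I would simply step through the twelve clauses.

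For part (1), clauses (b), (d), (e), (f-global) are immediate: (b) is the disjunction forcing condition at a single node; (d) invokes Lemma \ref{lemma: GN's global negativity} to conclude that $\models^A \neg A$ forces $\neg A$ everywhere, which matches the negation of $\models^A A$; (e) uses the merged $\forall$-condition, whose right-hand side is $k$-independent; (f-global) unpacks the existential forcing condition directly. The substantive clause is (a): given $\models^A A$ and $\models^A B$, formula prevalence of $A$ together with a node $k_0$ forcing $B$ yields some $k_1 \geq k_0$ forcing $A$, and persistence upgrades this to $k_1 \models A \land B$. Clause (f-local) reduces to (a) by using object prevalence to absorb the $E(\overline{d})$ conjunct. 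For (c), split on $\not \models^A A$ (which makes $A \to B$ hold at $r$ vacuously) versus $\models^A B$ (where prevalence of $B$ directly yields $r \models A \to B$), and argue the converse by contraposition from $\models^P A$ and $\not \models^A B$.

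For part (2), clauses (c), (d), (e) follow because their right-hand sides are $k$-independent: (c) inherits from (1c), since the argument there already produces $r \models A \to B$; (d) restates Lemma \ref{lemma: GN's global negativity}; and (e) uses the merged $\forall$-condition directly. Clauses (a), (b), (f) rest on root-reduction plus persistence: the outermost connective is decided at $r$, after which persistence lifts any witness found at $r$ to hold at every $k \geq r$. The clause most worth highlighting is (2b), which fails in intuitionistic logic: $\models^V A \lor B$ gives $r \models A$ or $r \models B$, whence persistence yields validity of the relevant disjunct. The main risk is bookkeeping---recording which prevalence property is invoked in which clause and handling the global and local quantification modes uniformly---rather than any genuinely difficult step.
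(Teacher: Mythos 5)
Your proposal is correct and follows essentially the same route as the paper, whose proof is simply ``use the two prevalence properties; for (ii, a--b, f) look at the root'': every clause is read off from the forcing conditions via formula/object prevalence, persistence, and the reduction of validity to the root. Your expanded bookkeeping (the merged $\forall$-condition, absorbing $E(\overline{d})$ in (1f-local), and the case split in (1c)) matches what the paper's terse proof leaves implicit.
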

\begin{proof}
  Use the two prevalence properties. For (ii, a-b \& f), look at the root.
\end{proof}
\noindent We note that then plainly $\models_{\mathcal{W}_\textup{P}}^V \forall x (A \lor \neg A)$.

Proposition \ref{proposition: Assertibility and validity in the prevalent models} (i) means that what holds in the future obeys classical logic. In other words, the collection of the assertible formulas can comprise a classical node. This may lead one to the idea that a prevalent model can be summarised by two nodes, one leaf node for the assertible formulas and the root node for the valid formulas. This indeed is possible: formally, the semantic consequence in $\mathcal{W}_\textup{P}$ boils down to that in $\mathcal{W}_{\textup{P}2}$, the subclass of the two-node prevalent models\footnote{
This result corresponds to \cite{Yamada2023}'s proposition 2.14.
}.
To prove this (proposition \ref{proposition: Strong contraction}), we will use that the prevalent models are closed under `model contraction' and have the submodel property, as follows.

Given an $A \in \ClForm[\mathcal{L}(D)]$, we define a \textit{contraction model $W \vert A$ with respect to $A$} to be $\langle \{ r, k \}, \leq', D, J, v' \rangle$ as follows. $r <' k$. For all $P \in \Pred$, if $P$ occurs in $A$ or is $E$, then let $P^{v'(r)} = P^{v(r)}$ and $P^{v'(k)} = \bigcup_{l \in K} P^{v(l)}$; otherwise, $P^{v'(r)} = P^{v'(k)} = \emptyset$. Plainly, $W \vert A$ satisfies the finite verification condition (right above definition \ref{definition: Actual verification relation models_W}); and $W \vert A \in \mathcal{W}_{\textup{P}2}$.

\begin{lemma}[Model contraction]\label{lemma: Model contraction}
  For any subformula $A'$ of $A$, (i) $\models_W^A A'$ iff $k \models_{W \vert A} A'$, and (ii) $\models_W^V A'$ iff $r \models_{W \vert A} A'$.
\end{lemma}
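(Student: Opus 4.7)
The plan is to prove both parts by simultaneous induction on logical complexity, after strengthening the statement slightly: rather than restricting to syntactic subformulas of $A$, I would show that for every closed $B \in \ClForm[\mathcal{L}(D)]$ whose predicate symbols all occur in $A$ or equal $E$, both $\models_W^A B$ iff $k \models_{W \vert A} B$ and $\models_W^V B$ iff $r \models_{W \vert A} B$. Every subformula of $A$ and every closed substitution instance of such a subformula meets this predicate restriction, so the lemma follows. A preliminary check is that $W \vert A \in \mathcal{W}_\textup{P}$, so that proposition \ref{proposition: Assertibility and validity in the prevalent models} applies on both sides: persistence and strictness transfer routinely from the definition of $v'$; the object prevalence of $W$ places each $d \in D$ in some $E^{v(l)}$, hence in $E^{v'(k)} = \bigcup_{l \in K} E^{v(l)}$; and atomic prevalence is automatic in a two-node frame whose leaf accumulates every atomic assertion.

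The base case is immediate from the definition of $v'$. For an atom $P(c)$ with $P$ occurring in $A$ or $P = E$, we have $\models_W^A P(c)$ iff $\langle J(c) \rangle \in P^{v(l)}$ for some $l \in K$ iff $\langle J(c) \rangle \in P^{v'(k)}$ iff $k \models_{W \vert A} P(c)$; and $\models_W^V P(c)$ iff $\langle J(c) \rangle \in P^{v(r)} = P^{v'(r)}$ iff $r \models_{W \vert A} P(c)$. The induction step is mechanical: for each connective or quantifier, I would apply proposition \ref{proposition: Assertibility and validity in the prevalent models} to rewrite $\models_W^A$ and $\models_W^V$ in classical recursive form, invoke the IH on the immediate (possibly instantiated) subformulas, and then re-apply the proposition in reverse for $W \vert A$. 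For instance, $\models_W^A B \to C$ iff $\not \models_W^A B$ or $\models_W^A C$ iff $\not \models_{W \vert A}^A B$ or $\models_{W \vert A}^A C$ iff $k \models_{W \vert A} B \to C$; the validity case for $\to$ reduces to the assertibility case via clause (ii, c) of the proposition, and the $\neg$ and $\forall$ cases are analogous.

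The only mild subtlety, and what I take to be the main thing to watch, arises in the quantifier step, where the recursive clauses involve substitution instances $B[\overline{d}/x]$ that are not literal subformulas of $A$. The strengthening at the outset is precisely what handles this: $B[\overline{d}/x]$ shares predicate symbols with $B$ and has strictly lower complexity than $\forall x B$ or $\exists x B$, so the IH applies. Keeping $E$ in the distinguished predicate set regardless of whether $E$ occurs in $A$ is also essential, since the local quantification clauses in proposition \ref{proposition: Assertibility and validity in the prevalent models} explicitly involve $E(\overline{d})$, and one wants the equivalences established at those atoms as well.
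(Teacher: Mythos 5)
Your proposal is correct and follows essentially the same route as the paper: simultaneous induction using proposition \ref{proposition: Assertibility and validity in the prevalent models} on both sides, with the validity clauses reduced to the assertibility ones. The strengthening to all closed formulas whose predicates occur in $A$ (or are $E$) is a sensible way to make the quantifier step of that induction precise, and the verification that $W \vert A \in \mathcal{W}_{\textup{P}2}$ is already noted in the paper just before the lemma.
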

\begin{proof}
  (i, ii) Induction. Use proposition \ref{proposition: Assertibility and validity in the prevalent models}. For (ii), use (i) also.
\end{proof}
\noindent For each $k \in K$, we define the \textit{submodel $W_k \in \mathcal{W}$ generated by $k$} to be ${\langle K_k, \leq_k, D, J, v_k \rangle}$, where $K_k = \{ k' \in K \mid k \leq k' \}$, $\leq_k$ is the restriction of $\leq$ to $K_k$ and $P^{v_k(k')} = P^{v(k')}$. Confirm that $W_k \in \mathcal{W}_\textup{P}$, given $W \in \mathcal{W}_\textup{P}$; and by induction that for all $k' \in K_k$ and $A \in \ClForm[\mathcal{L}(D)]$, $k' \models_{W_k} A$ iff $k' \models_W A$.
\begin{proposition}\label{proposition: Strong contraction}
  For any $\Gamma \subseteq_{\fin} \ClForm[\mathcal{L}]$, $\Gamma \models_{\mathcal{W}_\textup{P}}^V A$ iff $\Gamma \models_{\mathcal{W}_\textup{P2}}^V A$.
\end{proposition}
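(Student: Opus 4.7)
The right-to-left inclusion $\Gamma \models^V_{\mathcal{W}_\textup{P2}} A \Rightarrow \Gamma \models^V_{\mathcal{W}_\textup{P}} A$ is not the one we need; in fact the non-trivial direction is $\Rightarrow$, since $\mathcal{W}_\textup{P2} \subseteq \mathcal{W}_\textup{P}$ gives the other direction at once. So the whole work lies in proving that if $\Gamma \models^V_{\mathcal{W}_\textup{P2}} A$ then $\Gamma \models^V_{\mathcal{W}_\textup{P}} A$, which I would do by contraposition: take any $W \in \mathcal{W}_\textup{P}$, $k \in K$, and $\vec{d} \in D^p$, $\vec{d'} \in D^q$ (where $\vec{x}$, $\vec{y}$ are the free-variable sequences as in definition \ref{definition: Semantic consequence}) witnessing $\Gamma \not\models^V_W A$, and build from these a two-node prevalent countermodel.

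The natural route is a two-step reduction: first strip down to the submodel generated by $k$, then contract. Replace $W$ by the generated submodel $W_k \in \mathcal{W}_\textup{P}$ whose root is $k$; by the remark in section \ref{section: Prevalent models}, forcing is preserved, so writing $\Gamma^* := \Gamma[\vec{d}/\vec{x}, \vec{d'}/\vec{y}]$ and $A^* := A[\vec{d}/\vec{x},\vec{d'}/\vec{y}]$ we still have $k \models_{W_k} \bigwedge_i E(\overline{d_i}) \land \bigwedge \Gamma^*$ and $k \not\models_{W_k} A^*$. Since $k$ is the root of $W_k$, this is equivalent to $\models^V_{W_k} \bigwedge_i E(\overline{d_i}) \land \bigwedge \Gamma^*$ and $\not\models^V_{W_k} A^*$.

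Now set $B := \bigwedge_i E(\overline{d_i}) \land \bigwedge \Gamma^* \land A^*$, so that every $E(\overline{d_i})$, every conjunct of $\Gamma^*$ and $A^*$ itself is a subformula of $B$. Form the contraction $W_k \vert B$; by construction $W_k \vert B \in \mathcal{W}_\textup{P2}$. Applying lemma \ref{lemma: Model contraction}(ii) to each relevant subformula of $B$ in turn, the root $r^\vee$ of $W_k \vert B$ satisfies $r^\vee \models_{W_k\vert B} E(\overline{d_i})$ for each $i$, $r^\vee \models_{W_k\vert B} C$ for each $C \in \Gamma^*$, and $r^\vee \not\models_{W_k\vert B} A^*$. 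By definition \ref{definition: Semantic consequence} instantiated at $r^\vee$ with the same $\vec{d},\vec{d'}$, this gives $\Gamma \not\models^V_{\mathcal{W}_\textup{P2}} A$, completing the contrapositive.

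\textbf{Where the work is.} The argument is essentially bookkeeping once one has the two tools (prevalence-preserving submodels and the contraction lemma). The one point to be careful about is that the contraction lemma only transfers forcing for \emph{subformulas} of the contraction-defining formula: choosing $B$ to be the single conjunction of everything in sight -- the $E(\overline{d_i})$'s, the hypotheses $\Gamma^*$, and the conclusion $A^*$ -- ensures that all the atomic and complex predicates we need to track appear in $B$, so that the two-node model $W_k\vert B$ faithfully reproduces both the satisfaction of the premises and the failure of the conclusion at its root.
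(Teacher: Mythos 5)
Your proof is correct and follows the paper's own route exactly: pass to the generated submodel $W_k$ at a node where the premises hold, then apply lemma \ref{lemma: Model contraction} (ii) to the contraction $W_k \vert (\bigwedge \Gamma \land A)$ — your extra conjuncts $E(\overline{d_i})$ are harmless (and in fact redundant, since the contraction preserves $E$'s extension by definition), and your explicit handling of free variables is just a more careful spelling-out of what the paper leaves implicit. The only blemish is the opening sentence, which mislabels the directions (the non-trivial implication is the $\Leftarrow$ of the stated biconditional, from $\mathcal{W}_{\textup{P}2}$-validity to $\mathcal{W}_{\textup{P}}$-validity); the argument you actually give is for the correct, non-trivial direction.
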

\begin{proof}
  ($\impliedby$) Use lemma \ref{lemma: Model contraction} (ii). If $\Gamma = \emptyset$, look at the contraction $W \vert A$ for a given $W \in \mathcal{W}_{\textup{P}}$. If $\Gamma \neq \emptyset$, then suppose $k \in K$ and $k \models_W \bigwedge \Gamma$, and consider the contraction $W_k \vert (\bigwedge \Gamma \land A)$. 
\end{proof}
\noindent This result suggests that the order of construction and verification is irrelevant to the semantic consequence of strict finitistic reasoning with prevalence. The models in $\mathcal{W}_\textup{P2}$ are obtained by discarding the information of such orders.

Further, the next result shows that, for strict finitistic validity, we can assume that any verification history starts with zero objects constructed. Formally, the validity in $\mathcal{W}_{\textup{P}2}$ boils down to that in $\mathcal{W}_{\textup{P}2 \emptyset} \subseteq \mathcal{W}_{\textup{P}2}$, the subclass of the `preconstructive' models. We say that a $W = \langle K, \leq, D, J, v \rangle \in \mathcal{W}$ with root $r$ is \textit{preconstructive} if $r \not \models_W E(c)$ for any $c \in \ClTerm[\mathcal{L}(D)]$. For each $W \, (\in \mathcal{W}_{\textup{P}2}) = \langle \{ r, k \}, \leq, D, J, v \rangle$, we let $W_{\emptyset} = \langle \{ r, k \}, \leq, D, J, v_\emptyset \rangle$, where $P^{v_\emptyset (r)} = \emptyset$ and $P^{v_\emptyset (k)} = P^{v (k)}$ for all $P \in \Pred$. Then $W_{\emptyset} \in \mathcal{W}_{\textup{P}2 \emptyset}$.
\begin{proposition}\label{proposition: Reduction of Wp to Wp2ets}
  (i) $k \models_{W_\emptyset} A$ iff $k \models_W A$. (ii) $r \models_{W_\emptyset} A$ implies $r \models_W A$. (iii) $\models_{\mathcal{W}_{\textup{P}2}}^V A$ iff $\models_{\mathcal{W}_{\textup{P}2 \emptyset}}^V A$.
\end{proposition}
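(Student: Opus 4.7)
The plan is to prove (i) and (ii) by simultaneous induction on the complexity of $A$, and then to deduce (iii) directly from them together with $\mathcal{W}_{\textup{P}2\emptyset} \subseteq \mathcal{W}_{\textup{P}2}$. Throughout, I will make free use of persistence, which is easily verified by a preliminary induction for any model in $\mathcal{W}$; for $W \in \mathcal{W}_{\textup{P}2}$ this has the pleasant consequence that $\not \models_W^A A$ iff $k \not \models_W A$, and similarly for $W_\emptyset$.

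For (i), observe that $k$ is the leaf of a two-node model, so every $k' \geq k$ is $k$ itself. Hence the forcing conditions for $\to$, global/local $\forall$, and global/local $\exists$ collapse at $k$ to conditions that speak only of $v(k)$. Since $P^{v_\emptyset(k)} = P^{v(k)}$ for all $P$, the atomic base case and the $\land, \lor, \to, \forall, \exists$ induction steps transfer immediately between $W$ and $W_\emptyset$. The one subtle clause is $\neg A$, which is `global' and so depends on what happens at $r$ as well: here I use persistence to rewrite $k \models \neg A$ as $k \not \models A$, and then invoke the IH.

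For (ii), atomic $P(c)$ is vacuous because $r \not \models_{W_\emptyset} P(c)$ (the root's valuation is empty), and $\top, \bot$ are trivial; $\land, \lor$ follow straight from IH. For the $\to$ clause, the $W_\emptyset$-hypothesis tells us at the leaf that $k \models_{W_\emptyset} A$ implies $k \models_{W_\emptyset} B$, which by (i) means $k \models_W A$ implies $k \models_W B$; to establish $r \models_W A \to B$, suppose some $k' \geq r$ forces $A$ in $W$; by persistence $k \models_W A$, hence $k \models_W B$ witnesses the required successor. For $\neg A$, $r \models_{W_\emptyset} \neg A$ gives $k \not \models_{W_\emptyset} A$, hence $k \not \models_W A$ by (i), hence $r \not \models_W A$ by persistence, so $r \models_W \neg A$. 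The universal clauses reduce to a family of $\top \to A$ or $E(\overline{d}) \to A$ instances (one per $d \in D$) and so fall under the $\to$ case, and the existential clauses follow from the IH applied to a single witness.

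For (iii), the nontrivial direction ($\Leftarrow$) takes any $W \in \mathcal{W}_{\textup{P}2}$, forms $W_\emptyset \in \mathcal{W}_{\textup{P}2\emptyset}$ as defined just before the proposition, applies the hypothesis to obtain $r \models_{W_\emptyset} A$ and $k \models_{W_\emptyset} A$, and then transfers these to $W$ by (ii) and (i) respectively; the ($\Rightarrow$) direction is immediate from the inclusion. The main technical obstacle is the $\to$ case of (ii): the $W_\emptyset$-assumption only guarantees a $B$-forcing successor when $A$ already holds in $W_\emptyset$, whereas in $W$ the antecedent $A$ may also come to hold at $r$ thanks to the nonempty valuation there; persistence in $W$ is exactly what bridges this gap, since it collapses any new antecedent at $r$ back onto $k$, where (i) lets us reuse the $W_\emptyset$-witness.
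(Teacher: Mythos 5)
Your proposal is correct and matches the paper's (very terse) proof: the paper likewise proves (i) and (ii) by induction with (ii) relying on (i), and obtains the nontrivial direction of (iii) from (ii) (stated there as a contraposition, which is logically the same as your direct transfer via $W_\emptyset$). Your filled-in details — the collapse of the forcing clauses at the leaf, the use of persistence to push antecedents at $r$ up to $k$ in the $\to$ case, and the vacuity of the atomic case at the empty root — are all sound.
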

\begin{proof}
  (i, ii) Induction. (ii) uses (i). (iii, $\impliedby$) The contraposition holds by (ii).
\end{proof}

In $\mathcal{W}_\textup{P}$, more formulas are stable than in $\mathcal{W}$. We define the class $\ST_\textup{P}[\mathcal{L}]$ by $\ST_\textup{P}[\mathcal{L}] ::= S \mid (S' \lor S') \mid (\exists x S')$, where $S \in \ST[\mathcal{L}]$, and $\exists x S'$ is global. Then, $\ST[\mathcal{L}] \subseteq \ST_\textup{P}[\mathcal{L}]$.

\begin{lemma}
  Any $\ST_\textup{P}$ formula is stable in all $W \in \mathcal{W}_{\textup{P}}$.
\end{lemma}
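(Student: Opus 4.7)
The plan is to prove the lemma by induction on the construction of $\ST_\textup{P}$ formulas. Fix $W \in \mathcal{W}_\textup{P}$ and some $k \in K$, and at each step show that $k \models_W \wneg \wneg A$ implies $k \models_W A$. For the base case $A = S \in \ST$, we appeal directly to the previous stability lemma for $\ST$, which applies across all of $\mathcal{W} \supseteq \mathcal{W}_\textup{P}$. So only the two new clauses require work.

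For the disjunctive case, suppose $A = S'_1 \lor S'_2$ with $S'_1, S'_2 \in \ST_\textup{P}$ stable in $W$, and assume $k \models \wneg \wneg (S'_1 \lor S'_2)$. Unfolding, there is some $k'' \geq k$ with $k'' \models S'_1 \lor S'_2$, so $\models_W^A S'_1 \lor S'_2$. By proposition \ref{proposition: Assertibility and validity in the prevalent models} (i, b), either $\models_W^A S'_1$ or $\models_W^A S'_2$. Consider the first case (the other is symmetric). By the formula prevalence of $W$, $\models_W^P S'_1$, which by the observation right after definition \ref{definition: Prevalence} amounts to $\models_W^V \neg \wneg S'_1$, and in particular $k \models \wneg \wneg S'_1$. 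The induction hypothesis on $S'_1$ then delivers $k \models S'_1$, whence $k \models S'_1 \lor S'_2$.

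For the global existential case, let $A = \exists x S'$ with $S' \in \ST_\textup{P}$ (so the quantifier is global) and suppose $k \models \wneg \wneg \exists x S'$. Extract some $k'' \geq k$ with $k'' \models \exists x S'$; then by the global clause (vi, a) there is a $d \in D$ with $k'' \models S'[\overline{d}/x]$. Hence $\models_W^A S'[\overline{d}/x]$, and by formula prevalence $\models_W^P S'[\overline{d}/x]$, giving $k \models \wneg \wneg S'[\overline{d}/x]$. Since substituting a name for a variable preserves membership in $\ST_\textup{P}$, the induction hypothesis yields $k \models S'[\overline{d}/x]$, and therefore $k \models \exists x S'$ by the same global clause.

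The only point that requires care is the disjunctive step, because in a general model one cannot distribute $\wneg \wneg$ over $\lor$; this is precisely where the formula prevalence property is doing the work, via proposition \ref{proposition: Assertibility and validity in the prevalent models}. Everything else is routine unfolding of the semantic clauses combined with the induction hypothesis.
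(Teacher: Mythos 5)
Your proof is correct and follows exactly the route the paper intends (the paper's own proof is just ``Induction''): the base case is the earlier stability lemma for $\ST$, and the two new clauses are handled by upgrading assertibility of a disjunct/instance to prevalence and hence to $\wneg\wneg$ at the given node. You also correctly flag the one non-routine point, namely that formula prevalence is what licenses the disjunctive step.
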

\begin{proof}
  Induction.
\end{proof}
\noindent We provide a characterisation of the validity in $\mathcal{W}_\textup{P}$ in terms of $\ST_\textup{P}$ and $\CQC$ in proposition \ref{proposition: neg neg A in NSFp is classical} and corollary \ref{proposition: When a classical theorem is a theorem}.

\subsection{A complete natural deduction system \textbf{NSF$_\textup{P}$}}\label{section: A complete natural deduction system NSFp}
We define a natural deduction system $\NSFp$ by all the rules of \textbf{NSF} with the following modifications.
\begin{multicols}{2}
  \begin{prooftree}
    \AxiomC{ \, }
    \RightLabel{DNE}
    \UnaryInfC{$\neg \neg A \to A$}
  \end{prooftree}
  \begin{prooftree}
    \AxiomC{ \, }
    \RightLabel{Obj}
    \UnaryInfC{$\forall x \neg \neg E(x)$}
  \end{prooftree}
\end{multicols}
\begin{multicols}{2}
  \begin{prooftree}
    \AxiomC{$\wneg \wneg S$}
    \RightLabel{ST$_\textup{P}$}
    \UnaryInfC{$S$}
  \end{prooftree}
  \begin{prooftree}
    \AxiomC{$\wneg A$}
    \RightLabel{$\neg$ I$_\textup{P}$}
    \UnaryInfC{$\neg A$}
  \end{prooftree}
\end{multicols}
\noindent (i) We have prevalence axioms (DNE) and (Obj) (cf. proposition \ref{proposition: Weakening the prev conds}, i). (ii) (ST) is extended to (ST$_\textup{P}$) that is applicable to $\wneg \wneg S$ with any $S \in \ST_\textup{P}$. (iii) Instead of ($\neg$ I$_1$), ($\neg$ I$_2$) and ($\to \bot$ E) of $\NSF$, we have ($\neg$ I$_\textup{P}$). (iv) We drop ($\forall$-loc I) and ($\forall$-loc E); and call ($\forall$-glo I) `($\forall$ I$_\textup{P}$)', and ($\forall$-glo E) `($\forall$ E$_\textup{P}$)', and use them also for formulas with a term not occurring only globally. (Their variable conditions stay.) We write $\vdash_{\NSFp}$ for derivability in $\NSFp$.

Plainly, $\NSFp$ is sound with respect to $\mathcal{W}_\textup{P}$. We note that rules ($\neg$ I$_1$), ($\neg$ I$_2$) and ($\to \bot$ E) of $\NSF$ are derivable via ($\neg$ I$_\textup{P}$); also ($\forall$-loc I) via ($\forall$ I$_\textup{P}$), and ($\forall$-loc E) via ($\forall$ E$_\textup{P}$). Therefore $\Gamma \vdash_{\NSF} A$ implies $\Gamma \vdash_{\NSFp} A$. Also, in the presence of (DNE) and (Obj), $\vdash_{\NSFp} \wneg \wneg E(t)$ for all $t \in \Term$. So the following rules are derivable for formulas with a term not occurring only globally:
\begin{multicols}{2}
  \begin{prooftree}
    \AxiomC{$ \, $}
    \noLine
    \UnaryInfC{$ \, $}
    \noLine
    \UnaryInfC{$ \, $}
    \noLine
    \UnaryInfC{$ \, $}
    \noLine
    \UnaryInfC{$ \, $}
    \noLine
    \UnaryInfC{$ \, $}
    \noLine
    \UnaryInfC{$ \, $}
    \noLine
    \UnaryInfC{$ A[t/x] $}
    \RightLabel{$\exists$-loc I$_\textup{P}$}
    \UnaryInfC{$\wneg \wneg \exists x A$}
  \end{prooftree}
  \begin{prooftree}
    \AxiomC{$\exists y A$}
    \AxiomC{$[A[x/y]]$}
    \noLine
    \UnaryInfC{$\vdots$}
    \noLine
    \UnaryInfC{$\wneg \wneg C$}
    \RightLabel{$\exists$-loc E$_\textup{P}$}
    \BinaryInfC{$\wneg \wneg C,$}
  \end{prooftree}
\end{multicols}
\noindent with the same variable conditions as ($\exists$-loc I) and ($\exists$-loc E), respectively.

We will use some of the following derivabilities in the completeness proof (proposition \ref{proposition: Strong completeness NSFp}).
\begin{lemma}
  (i) $\neg \neg A \vdash_{\NSFp} \neg \wneg A$. (ii) $\neg \neg A \land \neg \neg B \vdash_{\NSFp} \neg \neg (A \land B)$. (iii) $\neg \neg A \land \neg B \vdash_{\NSFp} \neg (A \to B)$. (iv) $\neg \neg (A \to B) \vdash_{\NSFp} \neg A \lor \neg \neg B$. (v) $\vdash_{\NSFp} S \lor \wneg S$, where $S \in \ST_\textup{P}$. (vi) $\neg \exists x \neg A \vdash_{\NSFp} \forall x A$. (vii) $\neg \neg \forall x A \vdash_{\NSFp} \forall x A$. (viii) $\exists x \neg \neg A \vdash_{\NSFp} \neg \neg \exists x A$.
\end{lemma}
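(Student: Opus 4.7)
The plan is to exploit a uniform backward-chaining strategy made possible by the $\NSFp$-specific rules. First I would assemble a small toolkit: (a) ($\neg$ I$_\textup{P}$) freely converts $\wneg A$ into $\neg A$; (b) the converse $\neg A \vdash_{\NSFp} \wneg A$ follows by hypothesising $A$, applying ($\neg$ E) to reach $\bot$, then ($\bot$ E) and ($\to$ I); (c) since $\bot \in \ST \subseteq \ST_\textup{P}$, (ST$_\textup{P}$) applied to $\wneg \wneg \bot$ recovers $\bot$, effectively simulating the dropped ($\to \bot$ E); and (d) $\vdash_{\NSFp} \neg A \lor \neg \neg A$ is inherited from lemma \ref{lemma: Basic facts of NSF} since $\NSF$-derivations are $\NSFp$-derivations. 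Generically, to conclude a $\neg C$ or $\neg \neg C$, one hypothesises $C$ or $\neg C$, derives $\bot$, closes via ($\to$ I) to $\wneg C$ or $\wneg \neg C$, and applies ($\neg$ I$_\textup{P}$); each ($\to$ I) step routes its $\bot$ through ($\bot$ E) to $\wneg \wneg \bot$.

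With this toolkit, items (i)--(vii) are routine. For (i), assume $\wneg A$, convert to $\neg A$, and contradict $\neg \neg A$. For (ii), assume $\neg(A \land B)$, then $B$, then $A$; form $A \land B$ and contradict $\neg(A \land B)$; discharge $A$ to $\neg A$ and contradict $\neg \neg A$; discharge $B$ analogously to contradict $\neg \neg B$. For (iii), assume $A \to B$ and $A$; ($\to$ E) gives $\wneg \wneg B$, a further ($\to$ E) against $\wneg B$ (obtained from $\neg B$ via toolkit (b)) yields $\wneg \wneg \bot$, and (ST$_\textup{P}$) yields $\bot$; discharge $A$ to $\neg A$ and contradict $\neg \neg A$. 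For (iv), apply ($\lor$ E) to $\neg A \lor \neg \neg A$: the $\neg A$ branch closes by ($\lor$ I); the $\neg \neg A$ branch assumes $\neg B$, invokes (iii) to derive $\neg(A \to B)$, contradicts $\neg \neg(A \to B)$, discharges to $\neg \neg B$, and closes by ($\lor$ I). For (v), apply ($\lor$ E) to $\neg S \lor \neg \neg S$: the $\neg S$ branch converts to $\wneg S$ and concludes by ($\lor$ I); the $\neg \neg S$ branch first reaches $\wneg \wneg S$ (assume $\wneg S$, convert to $\neg S$, contradict $\neg \neg S$), then (ST$_\textup{P}$) yields $S$ using $S \in \ST_\textup{P}$. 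For (vi), pick a fresh $x$, assume $\wneg A(x)$, produce $\neg A(x)$ and then $\exists x \neg A$ via ($\exists$-glo I), and contradict $\neg \exists x \neg A$; close to $\wneg \wneg A(x)$ and apply ($\forall$ I$_\textup{P}$), whose variable condition is met since $x$ is not free in $\neg \exists x \neg A$. For (vii), instantiate (DNE) as $\neg \neg \forall x A \to \forall x A$, apply ($\to$ E) with the hypothesis to get $\wneg \wneg \forall x A$, and (ST$_\textup{P}$) finishes since $\forall x A \in \ST_\textup{P}$.

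The main technical hurdle is (viii). Since $\neg \neg A \in \GN$ regardless of $A$, the outer quantifier of $\exists x \neg \neg A$ is always global, so ($\exists$-glo E) introduces a fresh $y$ with hypothesis $\neg \neg A[y/x]$, under the overall assumption $\neg \exists x A$, with target $\bot$. If $x$ occurs only globally in $A$, then ($\exists$-glo I) from $A[y/x]$ produces $\exists x A$ without needing $E(y)$, and the inner derivation contradicts $\neg \neg A[y/x]$ in the expected way. The subtle case is when $x$ occurs non-globally in $A$: ($\exists$-loc I) then demands $E(y)$, which is not in the context. Here (Obj) rescues the argument: ($\forall$ E$_\textup{P}$) applied to $\forall x \neg \neg E(x)$ gives $\wneg \wneg \neg \neg E(y)$, and (ST$_\textup{P}$) with $\neg \neg E(y) \in \GN$ yields $\neg \neg E(y)$. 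Now assume $E(y)$ and $A[y/x]$, apply ($\exists$-loc I) for $\exists x A$, contradict $\neg \exists x A$; discharge $A[y/x]$ to $\wneg A[y/x]$, convert to $\neg A[y/x]$, contradict $\neg \neg A[y/x]$; discharge $E(y)$ to $\wneg E(y)$, convert to $\neg E(y)$, and contradict $\neg \neg E(y)$ for $\bot$. Close by ($\exists$-glo E) --- whose variable side condition is met since $y$ is fresh and $\bot$ has no free variables --- and conclude with a final ($\to$ I) and ($\neg$ I$_\textup{P}$).
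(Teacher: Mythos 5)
Your proof is correct and uses the same essential machinery as the paper's: ($\neg$ I$_\textup{P}$) to pass between $\wneg$ and $\neg$, (DNE)/(Obj)/(ST$_\textup{P}$) where the prevalence axioms are genuinely needed, the decidability $\vdash_{\NSFp} \neg A \lor \neg\neg A$ inherited from $\NSF$, and ($\exists$-glo E) for (viii) since $\neg\neg A$ is always GN. The only differences are micro-level (e.g.\ the paper gets (viii) via the derived rule ($\exists$-loc I$_\textup{P}$) and double contraposition, and (v) via (ST$_\textup{P}$) applied to $\wneg\wneg(S \lor \wneg S)$, where you unfold the same ideas inline), so this counts as the same approach, just written out in more detail.
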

\begin{proof}
  (i) By (DNE). (ii) Use (i) and that $\neg \wneg A \land \neg \wneg B \vdash_{\NSF} \neg \wneg (A \land B)$. (iii) By ($\neg$ I$_\textup{P}$). (iv) Use the contraposition of (iii). (v) Follows from $\vdash_{\NSF} \wneg \wneg (A \lor \wneg A)$ (lemma \ref{lemma: Basic facts of NSF}, iii). (vi) Confirm $\neg \exists x \neg A \vdash_{\NSFp} \neg \neg A(y)$. Then apply (DNE). (vii) By (DNE). (viii) First, we have $\neg \neg A(y) \vdash_{\NSFp} \neg \neg \neg \neg \exists x A$: since $A(y) \vdash_{\NSFp} \neg \neg \exists x A$ by ($\exists$-loc I$_\textup{P}$), it suffices to apply contraposition twice. So apply ($\exists$-glo E) to $\exists x \neg \neg A$. 
  
\end{proof}

The proof of the completeness of $\NSFp$ is in the Henkin-style. It is simpler than that of $\NSF$.
\begin{proposition}[Strong completeness ($\NSFp$)]\label{proposition: Strong completeness NSFp}
  For any $\Gamma \subseteq_{\fin} \ClForm[\mathcal{L}]$ and $A \in \ClForm[\mathcal{L}]$, $\Gamma \models_{\mathcal{W}_\textup{P}}^V A$ implies $\Gamma \vdash_{\NSFp} A$.
\end{proposition}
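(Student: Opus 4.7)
The plan is to prove the contrapositive via a Henkin-style construction. By proposition \ref{proposition: Strong contraction}, it suffices to produce a countermodel in $\mathcal{W}_{\textup{P}2}$, so a two-node model is enough. Starting from $\Gamma \not\vdash_{\NSFp} A$, I first adapt lemma \ref{lemma: Extension lemma} to $\NSFp$; the inductive construction of a prime theory carries over unchanged since the existential rules of $\NSF$ and $\NSFp$ coincide and the ($\exists$-loc E)-based preservation of non-derivability is unaffected. This yields a consistent prime theory $\Gamma' \subseteq \ClForm[\mathcal{L}'_{\vert \Gamma}]$ (expanded to include $E$ as a predicate if needed) with $\Gamma \subseteq \Gamma'$ and $\Gamma' \not\vdash_{\NSFp} A$, for $\mathcal{L}' = \mathcal{L}(M)$ with some countable $M$.

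Next I define the canonical two-node model $W = \langle \{r,k\}, \leq, D, J, v\rangle$ with $r \leq k$, $D = \ClTerm[\mathcal{L}'_{\vert \Gamma}]$, canonical $J$, and $P^{v(r)} = P^{v(k)} = \{ \langle d_1, \dots, d_n \rangle \mid P(d_1,\dots,d_n) \in \Gamma' \}$ for every predicate $P$. Verifying $W \in \mathcal{W}_\textup{P}$ is straightforward: strictness follows from closure of $\Gamma'$ under (STR$_1$) and (STR$_2$); the finite verification condition holds because only finitely many predicates occur in $\Gamma$; the formula prevalence property is immediate from $v(r) = v(k)$; and the object prevalence property follows because $\vdash_{\NSFp} E(d)$ for every $d \in D$, obtained by ($\forall$ E$_\textup{P}$) on axiom (Obj), (ST$_\textup{P}$) applied to the GN formula $\neg\neg E(d)$, and (DNE), so $E(d) \in \Gamma'$ and hence $k \models_W E(d)$.

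The core step is a truth lemma stating that for every $B \in \ClForm[\mathcal{L}'_{\vert \Gamma}]$, $r \models_W B$ iff $B \in \Gamma'$, proved by induction on $B$. The argument rests on two facts about $\NSFp$: first, $\vdash_{\NSFp} B \lor \neg B$, derivable from lemma \ref{lemma: Basic facts of NSF} (iv) together with (DNE), so primality gives $\Gamma'$ a classical-like dichotomy; and second, the reduction $\wneg\wneg C \vdash_{\NSFp} \neg\neg C \vdash_{\NSFp} C$ via ($\neg$ I$_\textup{P}$) and (DNE), which effectively supplies modus ponens inside $\Gamma'$. The $\land$, $\lor$, and $\exists$ cases are standard via primality and Henkin witnesses, with the local $\exists$ mode accommodated by $E(a) \in \Gamma'$ automatically. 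The $\to$ and $\neg$ cases collapse the two-node successor quantification into a single syntactic test using the dichotomy and the $\wneg\wneg$-reduction. The $\forall$ case (both modes coinciding in $\mathcal{W}_\textup{P}$) uses ($\forall$ E$_\textup{P}$) plus the reduction for the forward direction, and for the backward direction exploits $\neg\forall x B \vdash_{\NSFp} \exists x \neg B$, derivable by contraposing a short ($\forall$ I$_\textup{P}$) argument and applying (DNE), together with the Henkin witness clause to locate a counter-instance $a$ with $\neg B[a/x] \in \Gamma'$. From the truth lemma, $r \models_W \bigwedge \Gamma$ while $r \not\models_W A$, yielding $\Gamma \not\models^V_{\mathcal{W}_\textup{P}} A$. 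The main obstacle is orchestrating the $\to$ and $\forall$ cases cleanly, since the semantic ``some later node'' quantifier must be correctly collapsed into a single syntactic test against $\Gamma'$, threading (DNE), ($\neg$ I$_\textup{P}$), and the derivable LEM with the Henkin machinery while respecting that $B \to C$ at the root is not the material conditional (cf. proposition \ref{proposition: Assertibility and validity in the prevalent models} (ii,c)).
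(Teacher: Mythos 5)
Your overall strategy (contraposition, an $\NSFp$-adaptation of the extension lemma, a two-node canonical model, a truth lemma) matches the paper's, but the execution contains several errors that break the argument. The central one is the valuation $P^{v(r)} = P^{v(k)}$. With both nodes identical the model is flat, and then $r \models_W \wneg P$ holds whenever $P \notin \Gamma'$, even though a prime theory need not contain $\wneg P$ in that case (e.g.\ $\Gamma = \emptyset$, $A = P \lor \wneg P$: this is not derivable in $\NSFp$ -- it fails at the root of a genuine two-node prevalent model where $P$ is forced only at the top -- yet your canonical model validates it). So your truth lemma fails at $\wneg$/$\lor$ and the construction cannot refute such non-theorems. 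The paper avoids this by making the top node strictly larger: $\langle d \rangle \in P^{v(k)}$ iff $\neg\neg P(d) \in \Gamma'$, with a two-part truth lemma ($k \models B$ iff $\neg\neg B \in \Gamma'$; $r \models B$ iff $B \in \Gamma'$) driven by the commutation laws $\neg\neg(C \land D) \dashv\vdash \neg\neg C \land \neg\neg D$, $\neg\neg(C \to D) \dashv\vdash \neg C \lor \neg\neg D$, $\neg\neg\forall x C \vdash \forall x C$, $\neg\neg\exists x C \vdash \exists x \neg\neg C$, etc.

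The flat model is forced on you by two incorrect derivability claims. First, $\vdash_{\NSFp} B \lor \neg B$ is false for general $B$: soundness refutes it, since $r \not\models P \lor \neg P$ in a two-node prevalent model where $P$ holds only at the top. Only $\vdash \neg B \lor \neg\neg B$ and $\vdash N \lor \neg N$ for $N \in \GN$ are available, and these are what the paper's proof actually uses. Second, $\neg\neg C \vdash_{\NSFp} C$ is false: (DNE) is an axiom in implicational form, and ($\to$ E) only returns $\wneg\wneg C$, which cannot be stripped unless $C \in \ST_\textup{P}$; there is no modus ponens "inside $\Gamma'$". The same slip infects your object-prevalence argument: from (Obj) one gets only $\wneg\wneg E(d)$ (equivalently $\neg\neg E(d) \in \Gamma'$), not $E(d) \in \Gamma'$ -- indeed $E(d)$ must be allowed to fail at the root, since validity in $\mathcal{W}_\textup{P}$ is already captured by the preconstructive models (proposition \ref{proposition: Reduction of Wp to Wp2ets}). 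Object prevalence is secured because $\neg\neg E(d) \in \Gamma'$ puts $d$ in $E$'s extension at the top node.
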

\noindent \textit{Proof.} Contraposition. Given $\Gamma \not \vdash_{\NSFp} A$, extend $\Gamma$ to a $\Gamma' \subseteq \ClForm[\mathcal{L}'_{\vert \Gamma}]$ in an extended language $\mathcal{L}' = \mathcal{L}(M)$ in the same way as lemma \ref{lemma: Extension lemma}. Confirm that, in the present context too, $\Gamma'$ is a prime theory with respect to $\mathcal{L}'_{\vert \Gamma}$ such that $\Gamma' \not \vdash_{\NSFp} A$.

We define $W = \langle \{r, k \}, \leq, D, J, v \rangle$ as follows. Let $r < k$. Define $D := \ClTerm[\mathcal{L}'_{\vert \Gamma}]$. Inductively, for each constant $a$ in $\mathcal{L}'_{\vert \Gamma}$, $J^1(a) := a$; for each $d \in D$, $J^1(\overline{d}) := d$; for each $f \in \Func$, $(J^2 (f)) (d) := f(d)$ for all $d \in D$; and for each $f(c) \in \ClTerm[\mathcal{L}(D)]$, $J^1( f(c) ) := (J^2(f)) (J^1(c))$. For all $P \in \Pred$ and $d \in D$, $\langle d \rangle \in P^{v(r)}$ iff $P(d) \in \Gamma'$, and $\langle d \rangle \in P^{v(k)}$ iff $\neg \neg P(d) \in \Gamma'$. Then $W$ satisfies the finite verification condition, since $\Gamma$ is finite. Further, $\neg \neg E(d) \in \Gamma'$ for all $d \in D$ by (Obj). Therefore $W \in \mathcal{W}_{\textup{P}2}$.

We will show by induction that for all $B \in \ClForm[\mathcal{L}'_{\vert \Gamma}]$, (i) $k \models_W B$ iff $\neg \neg B \in \Gamma'$, and (ii) $r \models_W B$ iff $B \in \Gamma'$. Recall that $\vdash_{\NSF} N \lor \neg N$.

(i) The base case is trivial. ($\land$) Use $\neg \neg (C \land D) \dashv \vdash_{\NSFp} \neg \neg C \land \neg \neg D$. ($\lor$) Use $\neg \neg (C \lor D) \dashv \vdash_{\NSF} \neg \neg C \lor \neg \neg D$, and that $\Gamma'$ is a prime theory. ($\to$) Use $\neg \neg (C \to D) \dashv \vdash_{\NSFp} \neg C \lor \neg \neg D$. ($\neg$) Use $\neg \neg \neg C \dashv \vdash_{\NSF} \neg C$.
\begin{itemize}
  \item [($\forall$)] By the hypothesis, $k \models_W \forall x C$ iff $k \models_W C[\overline{d}/x]$ for all $d \in D$ iff $\neg \neg C[\overline{d}/x] \in \Gamma'$ for all $d$. So it suffices to show that the last is equivalent to $\neg \neg \forall x C \in \Gamma'$. ($\implies$) $\neg \exists x \neg C \in \Gamma'$ holds. For, if $\exists x \neg C \in \Gamma'$, then there is an $a \in M$ such that $\neg C[a/x] \in \Gamma'$, but this is contradictory. Now use $\neg \exists x \neg C \vdash_{\NSFp} \forall x C$. ($\impliedby$) Use $\neg \neg \forall x C \vdash_{\NSFp} \forall x C$.
  \item [($\exists$)] Since $k$ is a leaf, we can ignore the modes of occurrence of $x$. So we have by the hypothesis, $k \models_W \exists x C$ iff $k \models_W C[\overline{d}/x]$ for some $d \in D$ iff $\neg \neg C[d/x] \in \Gamma'$ for some $d$. It suffices to show that the last is equivalent to $\neg \neg \exists x C \in \Gamma'$. ($\implies$) $\neg \forall x \neg C \in \Gamma'$ holds, since if $\forall x \neg C \in \Gamma'$, then $\neg C[d/x] \in \Gamma'$ for all $d$, a contradiction. Use $\neg \forall x \neg C \vdash_{\NSFp} \neg \neg \exists x C$. ($\impliedby$) Use that $\neg \neg \exists x C \vdash_{\NSF} \exists x \neg \neg C$.
\end{itemize}

(ii) The base case is trivial. ($\land$), ($\lor$) and ($\neg$) are easy. For ($\to$) and ($\forall$), use proposition \ref{proposition: Assertibility and validity in the prevalent models}, (i) above and $\neg \neg A \vdash_{\NSFp} \wneg \wneg A$.

Therefore $\Gamma \not \models_{\mathcal{W}_\textup{P}}^V A$, since $r \models_W \bigwedge \Gamma$, and $r \not \models_W A$.

\qed

\noindent This implies that $\vdash_{\NSFp} A$ iff $\models_{\mathcal{W}_{\textup{P}2 \emptyset}}^V A$ by propositions \ref{proposition: Strong contraction} and \ref{proposition: Reduction of Wp to Wp2ets}.

\subsection{On interchangeability of negation}\label{section: On interchangeability of negation}
In this section, we investigate the condition under which $\neg$ and $\wneg$ are interchangeable in the context of $\mathcal{W}_{\textup{P}}$. Outermost $\neg$ and $\wneg$ are interchangeable, as we noted after proposition \ref{proposition: Weakening the prev conds}. In fact, as we will show, they are interchangeable almost everywhere (proposition \ref{proposition: Replace neg with wneg}); and $\wneg$ is in a sense reducible to $\neg$ on the level of the theorems (proposition \ref{proposition: Reducibility to neg}), but not vice versa (proposition \ref{proposition: Irreducibility to wneg}). As a result, we see that there is a set of the theorems of $\SFQp$ which we do not need strict finitistic negation to `express'. This result motivates us to focus on this set, and compare it with intermediate logics, as we conduct in the next section (\ref{section: Relation between the neg-less fragment and intermediate logics}).

We first establish the conditions for interchangeability on the level of the models, and then proceed to the level of the theorems. However, to treat substitution of $\neg$ and $\wneg$ properly, we need the notion of a subformula occurrence (cf. \cite[pp.59-61]{TroelstravanDalen1988}). We start with its definition and the basic facts.

We define the set $\Cont[\mathcal{L}_0]$ of the expressions $F$, called the \textit{formula contexts}, on any given language $\mathcal{L}_0$ as follows.
\begin{itemize}
  \item $\Cont[\mathcal{L}_0] ::= * \mid (F \square A) \mid (A \square F) \mid (\neg F) \mid (Qx F)$,
\end{itemize}
where $*$ is a primitive symbol, $A \in \Fm[\mathcal{L}_0]$, $\square \in \{ \land, \lor, \to \}$ and $Q \in \{ \forall, \exists \}$. We may omit the parentheses. The order of precedence is the same as the formulas. $\ClCont[\mathcal{L}_0]$ is the set of the closed formula contexts.

Intuitively, a formula context $F$ is a formula with one placeholder symbol $*$. If a formula $B$ fills the $*$, it turns to a formula in the usual sense, which we denote by $F[B]$; and if another $C$ does, $F[C]$ is obtained. We use this machinery to treat the substitution of $C$ for $B$ occurring in a formula $F[B]$.
\begin{definition}[{$[B/*]$}, formula occurrence]
  Substitution $[B/*]$ (or simply $[B]$) of a $B \in \Fm[\mathcal{L}_0]$ for $*$ is defined inductively on $\Cont[\mathcal{L}_0]$ by $*[B] := B$, $(F \square A)[B] := F[B] \square A$, $(A \square F)[B] := A \square F[B]$, $(\neg F)[B] := \neg (F[B])$ and $(QxF)[B] := Qx (F[B])$.
\end{definition}
\noindent We call any pair $\langle F, B \rangle \in \Cont[\mathcal{L}_0] \times \Fm[\mathcal{L}_0]$ a \textit{formula occurrence}, as it specifies the occurrence, in this case, of $B$ in the formula $F[B]$. We understand that $F[C]$ serves as the result of replacing $B$ in $F[B]$ with a $C \in \Fm[\mathcal{L}_0]$. So we define the result $A[C/\langle F, B \rangle]$ of substitution of an occurrence of $B$ in $A$ with $C$ to be $F[C]$ if $F[B] = A$, and $A$ otherwise.

For brevity, however, we will directly be discussing $F[C]$ etc. to speak of $A[C/\langle F, B \rangle]$ etc. We note that a variable in $C$ may become bound in $F[C]$ (cf. \textit{ibid.}, section 3.10). Plainly, if $x$ is occurring in $F[\wneg B]$ only globally, so is in $F[\neg B]$. 

Also, we define substitution $[s/t]$ for $s, t \in \Term[\mathcal{L}_0]$ in an $F \in \Cont[\mathcal{L}_0]$ the same way as formulas, except $*[s/t] := *$. We note that then $(F[B])[s/t] = (F[s/t])[B[s/t]]$ does not in general hold. 
But $(F[B])[c/x] = (F[c/x])[B[c/x]]$ holds for any $c \in \ClTerm[\mathcal{L}_0]$, if $F[B]$ has no quantificational subformula $QxA$. 

Let us fix a $W = \langle K, \leq, D, J, v \rangle \in \mathcal{W}$. The following generalises the fact in the context of $\mathcal{W}$ that $k \models_W QxB$ iff $k \models_W Qy(B[y/x])$ (cf. after definition \ref{definition: Actual verification relation models_W}).
\begin{lemma}\label{lemma: Bound variables are always unique}
  For any $F[QxB] \in \ClForm[\mathcal{L}(D)]$ and $y$ not occurring in $F$ or $B$, $k \models_W F[QxB]$ iff $k \models_W F[Qy(B[y/x])]$.
\end{lemma}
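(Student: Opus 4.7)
The natural route is structural induction on the formula context $F \in \ClCont[\mathcal{L}(D)]$, proving the stronger statement that the equivalence holds at every node $k \in K$ and in fact under arbitrary closed-term substitutions for the free variables of $F[QxB]$ (so that the induction goes through even though subcontexts $F'$ of $F$ may have variables bound by outer quantifiers of $F$). The base case $F = *$ is precisely the alpha-renaming observation made immediately after Definition \ref{definition: Actual verification relation models_W}, namely that $k \models_W QxA \Leftrightarrow k \models_W Qz(A[z/x])$ whenever $z$ does not occur in $A$; instantiate with $A := B$ and $z := y$.

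For the induction step, the propositional cases $F = F' \,\square\, A$, $F = A \,\square\, F'$ with $\square \in \{\land, \lor, \to\}$ are immediate from the forcing clauses and the induction hypothesis on $F'$. The case $F = \neg F'$ uses the fact that $k \models_W \neg C$ quantifies over all $l \in K$, so applying the IH at each $l$ (to the open context $F'$) suffices. Note in each case that whether $y$ occurs in the relevant sub-expressions is preserved because $y$ does not occur in $F$ or $B$, and similarly that global/local modes of any inner quantifier are not affected by passing from $QxB$ to $Qy(B[y/x])$ (the free variables of these two subformulas coincide).

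The main obstacle is the quantifier case $F = QzF'$. Writing out the forcing condition (in either the global or local mode, which coincide for $QxB$ and $Qy(B[y/x])$ since $x,y \neq z$ and $y \notin F \cup B$), one must compare, for each $d \in D$, the formulas $(F'[QxB])[\overline{d}/z]$ and $(F'[Qy(B[y/x])])[\overline{d}/z]$. The crucial calculation is the substitution commutation
\[
(F'[QxB])[\overline{d}/z] = (F'[\overline{d}/z])[Qx(B[\overline{d}/z])]
\]
and
\[
(F'[Qy(B[y/x])])[\overline{d}/z] = (F'[\overline{d}/z])[Qy(B[\overline{d}/z][y/x])],
\]
which is legitimate here because $\overline{d}$ is a closed constant (no capture), $z \neq y$ (since $y \notin F$), and $B[y/x][\overline{d}/z] = B[\overline{d}/z][y/x]$ (since $y \neq z$ and $\overline{d}$ is closed). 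The induction hypothesis now applies to the context $F'[\overline{d}/z]$ and the subformula $B[\overline{d}/z]$, since $y$ still occurs in neither. One must also check that Qx and Qy have the same mode (global/local) inside these substituted subformulas, which follows because the occurrences of $x$ in $B[\overline{d}/z]$ and $y$ in $B[\overline{d}/z][y/x]$ are structurally matched.

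I expect the verification of the commutation identity and the parallel matching of global/local quantification modes to be the only non-routine part; everything else is a straightforward unfolding of the forcing clauses in conjunction with the induction hypothesis. The argument is uniform in $Q \in \{\forall, \exists\}$ and in both the global and local forcing conditions.
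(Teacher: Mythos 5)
Your proposal is correct and matches the paper's proof in its essentials: both proceed by induction on the context $F$, strengthened to allow arbitrary closed-term substitutions for the free variables of $F[QxB]$, with the base case given by the alpha-renaming fact noted after Definition \ref{definition: Actual verification relation models_W} and the quantifier case handled by commuting the substitution past the context-filling operation before applying the induction hypothesis. The only cosmetic difference is that the paper keeps a single simultaneous substitution $[\vec{c}/\vec{z}, \overline{d}/u]$ on the whole filled context rather than distributing it over $F'$ and $B$ separately as you do.
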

\begin{proof}
  Prove by induction on $F$ that for any $F \in \Cont[\mathcal{L}(D)]$, $QxB \in \Fm[\mathcal{L}(D)]$ and sequence $\vec{c}$ in $\ClTerm[\mathcal{L}(D)]$, $k \models (F[QxB])[\vec{c}/\vec{z}]$ iff $k \models (F[Qy(B[y/x])])[\vec{c}/\vec{z}]$, where $\vec{z}$ is the sequence of $\FV(F[QxB])$. ($\forall$) Let $F = \forall u F'$. Consider $k \models (\forall u (F'[QxB])) [\vec{c}/\vec{z}]$. This formula is $\forall u ((F'[QxB]) [\vec{c}/\vec{z}])$, since no element of $\vec{z}$ is $u$. This is equivalent to that for all $d$, $k \models \top \to ((F'[QxB])[\vec{c}/\vec{z}] ) [\overline{d}/u]$ (or with $E(\overline{d})$ instead of $\top$). This formula is $\top \to (F'[QxB]) [\vec{c}/ \vec{z}, \overline{d}/u]$. Apply the hypothesis.
\end{proof}
\noindent So we will always be assuming the bound variables are unique.

Now, assume $W \in \mathcal{W}_{\textup{P}}$. Then we see $\neg$ and $\wneg$ can freely be exchanged preserving assertibility.
\begin{proposition}\label{proposition: wneg and neg are the same in assertibility}
  For any $F[B] \in \ClForm[\mathcal{L}(D)]$, $\models_W^A F[\neg B]$ iff $\models_W^A F[\wneg B]$.
\end{proposition}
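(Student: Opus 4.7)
The plan is to induct on the number of connectives and quantifiers appearing in the formula context $F$ (counting the placeholder $*$ as atomic), using proposition \ref{proposition: Assertibility and validity in the prevalent models} (i) which makes assertibility in $\mathcal{W}_\textup{P}$ fully compositional in the classical manner. Before running the induction, by lemma \ref{lemma: Bound variables are always unique} I may assume all variables bound in $F$ are distinct from each free variable of $B$; this ensures that for any $\overline{d}$, substitution commutes as $F_1[B][\overline{d}/x] = (F_1[\overline{d}/x])[B[\overline{d}/x]]$, which will be needed in the quantifier step.

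For the base case $F = *$, the formula $B$ is itself closed. Clause (i, d) of proposition \ref{proposition: Assertibility and validity in the prevalent models} gives $\models_W^A \neg B$ iff $\not\models_W^A B$, while applying clause (i, c) to $\wneg B = B \to \bot$ gives $\models_W^A \wneg B$ iff $\not\models_W^A B$ or $\models_W^A \bot$; since $\bot$ is never assertible, the latter reduces to $\not\models_W^A B$. The two are thus equivalent.

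For the inductive step, the propositional cases $F = F_1 \square A$, $A \square F_1$ with $\square \in \{ \land, \lor, \to \}$, and $F = \neg F_1$, follow immediately by the matching compositional clauses of proposition \ref{proposition: Assertibility and validity in the prevalent models} (i) applied to the induction hypothesis on $F_1$. For $F = \forall x F_1$ or $F = \exists x F_1$, clauses (i, e--f) give $\models_W^A F[B]$ iff for all (resp.\ some) $d \in D$, $\models_W^A F_1[B][\overline{d}/x]$, uniformly in the mode of the quantifier. By the bound-variable convention, $F_1[B][\overline{d}/x] = (F_1[\overline{d}/x])[B[\overline{d}/x]]$, so the induction hypothesis applies to the strictly shorter context $F_1[\overline{d}/x]$ with formula $B[\overline{d}/x]$, converting $\neg$ into $\wneg$ inside; reassembling gives $\models_W^A F[\wneg B]$.

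The main subtlety will be the quantifier step, because the mode (local versus global) of $x$ may genuinely shift between $F[\neg B]$ and $F[\wneg B]$: $\neg B$ is always a $\GN$ formula while $\wneg B$ is $\GN$ only when $B$ is, so a variable that is global in $F[\neg B]$ via the $\neg B$ slot may become non-global in $F[\wneg B]$. In $\mathcal{W}$ this would force us to compare inequivalent forcing conditions, but in $\mathcal{W}_\textup{P}$ the object prevalence property delivers $\models_W^A E(\overline{d})$ for every $d \in D$, which is precisely what makes the assertibility clauses (i, e--f) mode-blind, so the potential obstruction collapses.
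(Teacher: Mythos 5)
Your proposal is correct and follows essentially the same route as the paper: induction on the context $F$ using the compositional assertibility clauses of proposition \ref{proposition: Assertibility and validity in the prevalent models}, with the unique-bound-variable convention used to commute substitution in the quantifier step. Your added observations — that the base case reduces via clauses (i, c--d) and that the mode-blindness of the assertibility clauses neutralises the local/global shift between $F[\neg B]$ and $F[\wneg B]$ — are exactly the details the paper's terse proof leaves implicit.
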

\begin{proof}
  Induction. Use proposition \ref{proposition: Assertibility and validity in the prevalent models}. 
  ($\forall$) Let $F = \forall x F'$. Then $\models^A (\forall x F')[\neg B]$ iff $\models^A (F'[\neg B]) [\overline{d}/x]$ for all $d$. Since the bound variables are unique, no $QxA$ occurs in $F'[\neg B]$. So the last formula is $(F'[\overline{d}/x]) [\neg (B[\overline{d}/x])]$. By the hypothesis, then, the last is equivalent to that $\models^A (F'[\overline{d}/x]) [\wneg (B[\overline{d}/x])]$ for all $d$.
\end{proof}
\noindent We see that on the nodes' level, $\wneg$ can always be replaced by $\neg$.
\begin{corollary}\label{corollary: wneg to neg}
  $k \models_W F[\wneg B]$ implies $k \models_W F[\neg B]$.
\end{corollary}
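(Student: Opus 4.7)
The plan is to prove the corollary by structural induction on the context $F$, aided by an auxiliary observation that follows directly from Proposition~\ref{proposition: wneg and neg are the same in assertibility} and the formula prevalence property of $W$: for any $l \in K$ and closed $F[B]$, if $l \models_W F[\neg B]$, then some $k' \geq l$ satisfies $k' \models_W F[\wneg B]$. Indeed, $l \models_W F[\neg B]$ yields $\models_W^A F[\neg B]$; Proposition~\ref{proposition: wneg and neg are the same in assertibility} transfers this to $\models_W^A F[\wneg B]$; and formula prevalence promotes the assertible formula to a witness $k' \geq l$.

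For the induction itself, the base case $F = *$ reduces to: if $k \models_W \wneg B$, then no $l \in K$ forces $B$, since otherwise $B$ would be assertible hence prevalent, furnishing some $k' \geq k$ with $k' \models_W B$ and contradicting $\wneg B$ at $k$. The connective cases for $\land$ and $\lor$, and the positive-consequent case $F = A \to F_1$, are routine from the induction hypothesis (IH). For the contravariant cases $F = F_1 \to A$ and $F = \neg F_1$, the IH on $F_1$ alone is insufficient; instead I apply the auxiliary observation to $F_1$ to promote any $k' \models_W F_1[\neg B]$ to some $k^* \geq k'$ forcing $F_1[\wneg B]$, then invoke the forcing hypothesis at $k^*$ to close the case (an $A$-successor for $\to$, a contradiction for $\neg$). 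The case $F = \forall x F_1$ uses the mode-merging in prevalent models from the remarks preceding Proposition~\ref{proposition: Assertibility and validity in the prevalent models}, reducing $k \models_W \forall x A$ to assertibility of each instance $A[\overline{d}/x]$, to which the IH applies after securing bound-variable freshness via Lemma~\ref{lemma: Bound variables are always unique}.

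The hardest case is $F = \exists x F_1$, where the two sides may invoke different modes of quantification: since $\neg B$ is always in $\GN$ while $\wneg B$ need not be, the substitution $\wneg \mapsto \neg$ can only preserve or add GN-subformulas, so the mode can shift from local on the $\wneg$-side to global on the $\neg$-side, but never the reverse. A case analysis on the three admissible mode combinations reduces each to the IH applied to the substituted sub-context $F_1[\overline{d}/x]$ for a suitable $d \in D$, with the datum $E(\overline{d})$ propagated from the LHS by persistence whenever the local mode is involved. The main obstacle is precisely this $\exists$ case: a symmetric induction would fail since the converse of the corollary genuinely breaks down in prevalent models (e.g.\ for $F = \exists x *$ and $B = P(x)$ in a two-node prevalent model where $E(\overline{d})$ is forced only at the leaf while $\neg P(\overline{d})$ holds at the root), so the asymmetric auxiliary observation is essential for handling the contravariant positions.
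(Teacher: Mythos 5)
Your proof is correct and follows essentially the same route as the paper's: induction on the context $F$, invoking Proposition~\ref{proposition: wneg and neg are the same in assertibility} (together with formula prevalence — your ``auxiliary observation'' is exactly what that invocation unpacks to) for the contravariant cases $\to$, $\neg$, $\forall$, and the observation that the quantification mode can only move from local to global under the substitution $\wneg B \mapsto \neg B$ for the $\exists$ case. The paper's proof is just a terser statement of the same argument, so no further comparison is needed.
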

\begin{proof}
  Similar to proposition \ref{proposition: wneg and neg are the same in assertibility}. Use it for ($\to$), ($\neg$) and ($\forall$). ($\exists$-glo) If $\exists x (F'[\wneg B])$ is global, then so is $\exists x (F'[\neg B])$. 
\end{proof}
\noindent We have a characterisation of the class of the formula occurrences for which the converse holds, although it may not be so illuminating, regrettably.
\begin{definition}[$\mathcal{M}_k$]\label{definition: Mk}
  For each $k \in K$, the set $\mathcal{M}_k \subseteq \Cont[\mathcal{L}(D)] \times \Fm[\mathcal{L}(D)]$ of formula occurrences is the smallest such that for all $F \in \Cont[\mathcal{L}(D)]$ and $A, B \in \Fm[\mathcal{L}(D)]$,
  \begin{itemize}
    \item $\langle *, B \rangle, \langle F \to A, B \rangle, \langle A \to F, B \rangle, \langle \neg F, B \rangle, \langle \forall x F, B \rangle \in \mathcal{M}_k$;
    \item $\langle F \land A, B \rangle, \langle A \land F, B \rangle \in \mathcal{M}_k$ if $\langle F, B \rangle \in \mathcal{M}_k$;
    \item $\langle F \lor A, B \rangle, \langle A \lor F, B \rangle \in \mathcal{M}_k$ if $\langle F, B \rangle \in \mathcal{M}_k$ or $k \models_W A$; and
    \item $\langle \exists x F, B \rangle \in \mathcal{M}_k$ if
    \begin{enumerate}
      \item (a) $\exists x (F[\neg B])$ is local or (b) $\exists x (F[\wneg B])$ is global, and
      \item (a) $k \models (F[\neg B])[\overline{d}/x]$ implies $\langle F[\overline{d}/x], B[\overline{d}/x] \rangle \in \mathcal{M}_k$ for all $d \in D$, or (b) $k \models \exists x (F[\wneg B])$.
    \end{enumerate}
  \end{itemize}
\end{definition}

\begin{proposition}\label{proposition: Replace neg with wneg}
  Let $F[B] \in \ClForm[\mathcal{L}(D)]$. (i) If $\langle F, B \rangle \in \mathcal{M}_k$ and $k \models_W F[\neg B]$, then $k \models_W F[\wneg B]$. (ii) (i)'s converse holds, if $k \not \models_W E(c)$ for any $c \in \ClTerm[\mathcal{L}(D)]$.
\end{proposition}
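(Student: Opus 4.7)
The plan is to prove (i) by induction on the structure of the context $F$, case-by-case on the clauses of Definition~\ref{definition: Mk}. The base case $F = *$ is immediate since $\neg B \vdash \wneg B$. For the four cases where membership in $\mathcal{M}_k$ is automatic, namely $F \to A$, $A \to F$, $\neg F$ and $\forall x F$, the argument should not use the inductive hypothesis at all but rather rely on Proposition~\ref{proposition: wneg and neg are the same in assertibility} (assertibility is preserved under the swap) together with Corollary~\ref{corollary: wneg to neg} (the easy $\wneg$-to-$\neg$ direction). For instance, in the case $F' = F \to A$: assume $k \models F[\neg B] \to A$ and pick $k' \geq k$ with $k' \models F[\wneg B]$; by Corollary~\ref{corollary: wneg to neg} we obtain $k' \models F[\neg B]$, and the outer implication delivers a $k'' \geq k'$ with $k'' \models A$. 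The cases $A \to F$, $\neg F$, and $\forall x F$ all reduce to tracking assertibility of the inner subformula: in $\mathcal{W}_\textup{P}$, $k \models \neg C$ is equivalent to $\not\models_W^A C$, and the universal clause collapses to universal assertibility of the instances (Proposition~\ref{proposition: Assertibility and validity in the prevalent models}), so Proposition~\ref{proposition: wneg and neg are the same in assertibility} applies directly.

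For the conditional clauses $F \land A$, $A \land F$, $F \lor A$, $A \lor F$, the argument is routine: the conjunctive cases invoke the inductive hypothesis on the $F$-side; the disjunctive cases split into either the inductive hypothesis on $F$ or the side-disjunct $k \models A$ which survives the substitution unchanged. The substantive case is $\exists x F$, where one must account for the fact that the global/local dichotomy can flip under $\neg \leftrightsquigarrow \wneg$ (since $\wneg B = B \to \bot$ is typically not a $\GN$ formula even when $\neg B$ is). Subclause (a) of Definition~\ref{definition: Mk} — ``$\exists x (F[\neg B])$ is local or $\exists x (F[\wneg B])$ is global'' — is exactly what ensures that a witness obtained for $F[\neg B][\overline{d}/x]$ can be lifted to a witness for $\exists x(F[\wneg B])$. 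One instantiates with the witness $d$, applies the observation after Lemma~\ref{lemma: Bound variables are always unique} that $(F[\neg B])[\overline{d}/x] = (F[\overline{d}/x])[\neg B[\overline{d}/x]]$, and invokes the inductive hypothesis via the membership $\langle F[\overline{d}/x], B[\overline{d}/x] \rangle \in \mathcal{M}_k$ provided by subclause (b). If instead $k \models \exists x (F[\wneg B])$ already holds (the alternative in subclause b), the conclusion is trivial.

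For part (ii), the converse, the hypothesis $k \not\models_W E(c)$ for every closed $c$ makes every local existential formula false at $k$: any witness would have to satisfy $E(\overline{d})$, which is impossible. This collapse rules out the only configuration in which the $\neg\!\to\!\wneg$ replacement could fail the converse direction, namely one where $\exists x(F[\wneg B])$ is locally witnessed at $k$ while $\exists x(F[\neg B])$ requires a global witness unavailable under the induction. With that obstruction neutralised, the induction on $F$ runs in reverse using Corollary~\ref{corollary: wneg to neg} to peel off the outer $\wneg\!\to\!\neg$ direction in the positive cases, and Proposition~\ref{proposition: wneg and neg are the same in assertibility} again for the negative and quantificational cases, yielding both $k \models F[\neg B]$ and the required membership $\langle F, B \rangle \in \mathcal{M}_k$.

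The main obstacle I expect is the $\exists x F$ case: one must juggle (a) the fact that the mode of quantification changes under the $\neg\!\to\!\wneg$ swap, (b) the interaction of substitution-into-context with substitution-into-formula (where one must carefully exploit uniqueness of bound variables to guarantee $(F[B])[\overline{d}/x] = (F[\overline{d}/x])[B[\overline{d}/x]]$), and (c) the fact that the inductive hypothesis must be applied to the substituted context $F[\overline{d}/x]$ rather than to $F$ itself, so the definition of $\mathcal{M}_k$ has to be closed under such instantiations — which is exactly what subclause (b) of the $\exists$ clause is engineered to ensure.
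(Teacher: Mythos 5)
Your proposal is correct and follows essentially the same route as the paper: induction on the context $F$, with Proposition \ref{proposition: wneg and neg are the same in assertibility} handling the $\to$, $\neg$ and $\forall$ cases of (i), clause (1) of the $\exists$-condition in Definition \ref{definition: Mk} transferring the witness across the change of quantification mode, and, for (ii), Corollary \ref{corollary: wneg to neg} reducing everything to showing $\langle F, B \rangle \in \mathcal{M}_k$, which follows because $k \not\models E(c)$ forces any forced existential to be globally quantified. You have also correctly isolated the substitution-commutation point $(F[\neg B])[\overline{d}/x] = (F[\overline{d}/x])[\neg B[\overline{d}/x]]$ that the uniqueness of bound variables is there to secure.
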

\begin{proof}
  Both induction on $F$. (i) Use proposition \ref{proposition: wneg and neg are the same in assertibility} for ($\to$), ($\neg$) and ($\forall$). (ii) By corollary \ref{corollary: wneg to neg}, it suffices to prove that $\langle F, B \rangle \in \mathcal{M}_k$ is implied by that $k \not \models_W E(c)$ for any $c \in \ClTerm[\mathcal{L}(D)]$. ($\exists$) One can assume the quantification is global.
\end{proof}

Thus, $\wneg$ can always be $\neg$ (lemma \ref{corollary: wneg to neg}); and $\neg$ can be $\wneg$ in $A$, if $A$ is analysable into an `$\mathcal{M}_k$-format', i.e., $A$ is $F[\neg B]$ for some $\langle F, B \rangle \in \mathcal{M}_k$ (proposition \ref{proposition: Replace neg with wneg}). It follows that $\neg$ and $\wneg$ are always interchangeable in the propositional case.
\begin{corollary}\label{corollary: neg and wneg are totally interchangeable in the qf-free case}
  If $F[B] \in \ClForm[\mathcal{L}(D)]$ is quantifier-free, then $k \models F[\neg B]$ iff $k \models F[\wneg B]$.
\end{corollary}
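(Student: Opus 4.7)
My plan is to derive the corollary directly from the two preceding results: Corollary 3.23 gives the $(\impliedby)$ direction immediately (if $k \models F[\wneg B]$, then $k \models F[\neg B]$, with no condition on $F$), so the only work lies in the $(\implies)$ direction. For that, Proposition 3.24 (i) says that $k \models F[\neg B]$ implies $k \models F[\wneg B]$ whenever $\langle F, B \rangle \in \mathcal{M}_k$. Hence it suffices to show that for every quantifier-free $F \in \Cont[\mathcal{L}(D)]$ and every $B \in \Fm[\mathcal{L}(D)]$, we have $\langle F, B \rangle \in \mathcal{M}_k$.

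I would prove this auxiliary claim by a straightforward induction on the structure of $F$, reading off the cases from Definition 3.22. The base case $F = *$ is immediate. The cases $F = F' \to A$, $F = A \to F'$, and $F = \neg F'$ hold unconditionally by the definition of $\mathcal{M}_k$. For $F = F' \land A$ or $F = A \land F'$, the inductive hypothesis supplies $\langle F', B \rangle \in \mathcal{M}_k$, which is exactly what the definition demands. For the disjunctive cases $F = F' \lor A$ or $F = A \lor F'$, the definition requires either $\langle F', B \rangle \in \mathcal{M}_k$ or $k \models_W A$; the former again follows from the induction hypothesis, so the disjunction is satisfied. The quantifier clauses $\forall x F'$ and $\exists x F'$ of Definition 3.22 simply do not apply, since $F$ is assumed quantifier-free.

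There is no real obstacle here; the content of the corollary is essentially that the complications in Definition 3.22 (the disjunction alternative and the two quantifier clauses, where the membership in $\mathcal{M}_k$ genuinely depends on $k$ and on forcing-theoretic conditions) are triggered only by quantifiers and by disjunction combined with quantifiers. In the quantifier-free setting, the inductive hypothesis alone drives us through the disjunction case, and we obtain $\langle F, B \rangle \in \mathcal{M}_k$ unconditionally. Combining this with Proposition 3.24 (i) and Corollary 3.23 yields both directions of the biconditional.
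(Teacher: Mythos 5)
Your proposal is correct and matches the paper's own argument: the paper also proves the corollary by inducting on the context to establish membership in $\mathcal{M}_k$ and then invoking Proposition \ref{proposition: Replace neg with wneg} (i) together with Corollary \ref{corollary: wneg to neg}. The only (harmless) difference is that the paper's induction statement carries the hypothesis $k \models F[\neg B]$ to discharge the disjunction clause via $k \models_W A$, whereas you correctly observe that in the quantifier-free case the induction hypothesis alone already yields $\langle F, B \rangle \in \mathcal{M}_k$ unconditionally.
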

\begin{proof}
  Prove by induction that $k \models F[\neg B]$ implies $\langle F, B \rangle \in \mathcal{M}_k$.
\end{proof}
\noindent This result corroborates \cite{Yamada2023}'s identification of $\neg$ with $\to \bot$.

Now, we move on to the level of the theorems. Let us simply write $\SFQp$ for the set of the theorems of $\NSFp$; and $\SFQ_{\textup{\textbf{P}}}^{- \wneg}$ for the set of those not involving the combination $\to \bot$. We will show that $\wneg$ is not needed to capture $\SFQp$: any element in $\SFQp$ is obtained by changing suitable $\neg$ in some element in $\SFQ_{\textup{\textbf{P}}}^{-\wneg}$ into $\wneg$.

Define $\mathcal{M} := \bigcap_r \mathcal{M}_r$, where $r$ is a metavariable for the root nodes of $W \in \mathcal{W}_{\textup{P}2 \emptyset}$ (cf. definition before proposition \ref{proposition: Reduction of Wp to Wp2ets}).
For each $A \in \ClForm[\mathcal{L}]$, define $\Phi(A) := \{ F[\wneg B] \mid \langle F, B \rangle \in \mathcal{M}$ and $A = F[\neg B] \}$. Roughly speaking, $\Phi(A)$ is the set of `$\wneg$-variants' of $A$. To obtain $\Phi(A)$, one lists all the ways in which $A$ is analysable into an $\mathcal{M}$-format $F[\neg B]$, and in each of the ways, changes $\neg B$ occurring in $A$ into $\wneg B$. For instance, $\Phi(\neg A \lor \neg \neg A) = \{ \wneg A \lor \neg \neg A, \neg A \lor \wneg \neg A, \neg A \lor \neg \wneg A \}$. Then we see that $\SFQp$ coincides with the set $\SFQ_{\textup{\textbf{P}}}^{-\wneg}$ extended by arbitrary finite iterations of $\Phi$.
\begin{proposition}\label{proposition: Reducibility to neg}
  (i) If $\vdash_{\NSFp} F[\wneg B]$, then $\langle F, B \rangle \in \mathcal{M}$. (ii) $\SFQp = \bigcup_{n = 0}^\infty \Phi^{n} [\SFQ_{\textup{\textbf{P}}}^{-\wneg}]$.
\end{proposition}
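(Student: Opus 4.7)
The plan is to leverage the semantic bridge between $\mathcal{W}_\textup{P}$ and the preconstructive two-node class $\mathcal{W}_{\textup{P}2\emptyset}$ (propositions \ref{proposition: Strong contraction} and \ref{proposition: Reduction of Wp to Wp2ets}), together with the two sides of proposition \ref{proposition: Replace neg with wneg}.

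For (i), I would start from $\vdash_{\NSFp} F[\wneg B]$, apply soundness plus propositions \ref{proposition: Strong contraction} and \ref{proposition: Reduction of Wp to Wp2ets} to conclude $\models_{\mathcal{W}_{\textup{P}2\emptyset}}^V F[\wneg B]$, and in particular obtain $r \models_W F[\wneg B]$ for the root $r$ of every $W \in \mathcal{W}_{\textup{P}2\emptyset}$. Since such $r$ is preconstructive, proposition \ref{proposition: Replace neg with wneg}(ii) yields $\langle F, B\rangle \in \mathcal{M}_r$ for each such $r$, and intersecting over all such roots delivers $\langle F, B\rangle \in \mathcal{M}$.

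For (ii) $\supseteq$, I would induct on $n$. The base case is immediate from the definition of $\SFQ_{\textup{\textbf{P}}}^{-\wneg}$. For the step, let $A \in \SFQp$ and $A' = F[\wneg B] \in \Phi(A)$, so that $A = F[\neg B]$ and $\langle F, B\rangle \in \mathcal{M}$. To show $A' \in \SFQp$, it suffices by completeness and propositions \ref{proposition: Strong contraction} and \ref{proposition: Reduction of Wp to Wp2ets} to verify $\models^V F[\wneg B]$ in every $W \in \mathcal{W}_{\textup{P}2\emptyset}$. At the leaf $k$, leaf-forcing agrees with assertibility, so proposition \ref{proposition: wneg and neg are the same in assertibility} gives $k \models F[\neg B] \iff k \models F[\wneg B]$. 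At the preconstructive root $r$, $\langle F, B\rangle \in \mathcal{M} \subseteq \mathcal{M}_r$, so proposition \ref{proposition: Replace neg with wneg}(i) turns $r \models F[\neg B]$ into $r \models F[\wneg B]$.

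For (ii) $\subseteq$, I would induct on the number of $\to\bot$ combinations (i.e.\ $\wneg$-occurrences) in $A \in \SFQp$. If there are none, $A \in \SFQ_{\textup{\textbf{P}}}^{-\wneg} = \Phi^0[\SFQ_{\textup{\textbf{P}}}^{-\wneg}]$. Otherwise, single out one such occurrence and write $A = F[\wneg B]$ with the displayed $\wneg$ being that occurrence. Part (i) furnishes $\langle F, B\rangle \in \mathcal{M}$. Corollary \ref{corollary: wneg to neg} applied at every node of every prevalent model, together with soundness and completeness, gives $F[\neg B] \in \SFQp$; and $F[\neg B]$ has strictly fewer $\wneg$ than $A$, so the induction hypothesis places it in $\Phi^{m}[\SFQ_{\textup{\textbf{P}}}^{-\wneg}]$ for some $m$. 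By the definition of $\Phi$, $A = F[\wneg B] \in \Phi(F[\neg B]) \subseteq \Phi^{m+1}[\SFQ_{\textup{\textbf{P}}}^{-\wneg}]$. I expect the main technical point to be the bookkeeping around the distinction between $\mathcal{M}$ (an intersection of the model-dependent $\mathcal{M}_r$'s over the canonical class $\mathcal{W}_{\textup{P}2\emptyset}$) and the particular $\mathcal{M}_r$ needed when applying proposition \ref{proposition: Replace neg with wneg}(i); the reduction to $\mathcal{W}_{\textup{P}2\emptyset}$ via contraction and preconstructivisation is what makes the inclusion $\mathcal{M} \subseteq \mathcal{M}_r$ available on exactly the class of models one has to check.
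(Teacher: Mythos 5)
Your proposal is correct and follows essentially the same route as the paper: part (i) by soundness restricted to $\mathcal{W}_{\textup{P}2\emptyset}$ plus proposition \ref{proposition: Replace neg with wneg}(ii), and part (ii) by induction on the number of $\wneg$'s, using corollary \ref{corollary: wneg to neg} with completeness for one inclusion and proposition \ref{proposition: Replace neg with wneg}(i) (via $\mathcal{M} \subseteq \mathcal{M}_r$) for the other. The only differences are cosmetic: the paper packages (ii) as a single biconditional induction and, for the $\supseteq$ direction, relies on validity in a rooted model being equivalent to forcing at the root, so your separate leaf-node check via proposition \ref{proposition: wneg and neg are the same in assertibility} is sound but not needed.
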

\begin{proof}
  (i) By soundness, $\models_{\mathcal{W}_{\textup{P}2\emptyset}}^V F[\wneg B]$. Use proposition \ref{proposition: Replace neg with wneg} (ii).
  
  (ii) Prove by induction on $n$ that $A \in \SFQp$ has $n$ $\wneg$'s iff $A \in \Phi^{n} [\SFQ_{\textup{\textbf{P}}}^{-\wneg}]$. ($\implies$) Let $A \in \SFQp$ have $n+1$ $\wneg$'s. Then by (i), there is an $\langle F, B \rangle \in \mathcal{M}$ such that $A = F[\wneg B]$. By corollary \ref{corollary: wneg to neg} and completeness, $F[\neg B] \in \SFQp$. Since it has $n$ $\wneg$'s, $F[\neg B] \in \Phi^{n}[\SFQ_{\textup{\textbf{P}}}^{-\wneg}]$. Therefore $A = F[\wneg B] \in \Phi^{n+1}[\SFQ_{\textup{\textbf{P}}}^{-\wneg}]$.
  
  ($\impliedby$) Let $A \in \Phi^{n+1}[\SFQ_{\textup{\textbf{P}}}^{-\wneg}]$. Then there is an $\langle F, B \rangle \in \mathcal{M}$ such that $A = F[\wneg B]$, and by the hypothesis $F[\neg B] \in \SFQp$ and it has $n$ $\wneg$'s. Since $\langle F, B \rangle \in \mathcal{M}$, $\models_{\mathcal{W}_{\textup{P}2\emptyset}}^V F[\neg B]$ implies $\models_{\mathcal{W}_{\textup{P}2\emptyset}}^V F[\wneg B]$ by proposition \ref{proposition: Replace neg with wneg} (i). So $\vdash_{\NSFp} F[\wneg B]$ by completeness. 
\end{proof}

The same however does not hold for $\SFQ_{\textup{\textbf{P}}}^{-\neg}$, the set of the theorems without $\neg$. Let $\Psi(A) = \{ F[\neg B] \mid A = F[\wneg B] \}$.
\begin{proposition}\label{proposition: Irreducibility to wneg}
  $\bigcup_{n=0}^{\infty} \Psi^{n} [\SFQ_{\textup{\textbf{P}}}^{- \neg}] \subsetneq \SFQp$.
\end{proposition}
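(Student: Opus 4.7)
The inclusion $\bigcup_{n=0}^\infty \Psi^n[\SFQ_{\textup{\textbf{P}}}^{-\neg}] \subseteq \SFQp$ is immediate by induction on $n$: the base case is by definition, and the inductive step follows from corollary \ref{corollary: wneg to neg} together with the soundness and completeness of $\NSFp$, since a single $\Psi$-step turns some $\wneg D$ into $\neg D$ and preserves forcing at every node of every $W \in \mathcal{W}_\textup{P}$. For the strictness, the key reduction is this: since every $\Psi$-step replaces exactly one $\wneg D$ with $\neg D$ and never the reverse, if a formula $A$ is reached from a $\neg$-free formula $B$ by finitely many $\Psi$-steps, then $B$ must coincide with the formula $\rho(A)$ obtained from $A$ by uniformly substituting $\wneg$ for each occurrence of $\neg$. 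Consequently $A \in \bigcup_n \Psi^n[\SFQ_{\textup{\textbf{P}}}^{-\neg}]$ iff $\rho(A) \in \SFQp$, so it suffices to exhibit $A \in \SFQp$ with $\rho(A) \notin \SFQp$.

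The witness will be $A := \exists x\, \neg P(x) \lor \forall y\, P(y)$, for a fixed unary atomic predicate $P$. To see $\vdash_{\NSFp} A$, start from the theorem $\neg \forall y P(y) \lor \neg \neg \forall y P(y)$ (lemma \ref{lemma: Basic facts of NSF}, iv) and apply $(\lor\textup{E})$: the right disjunct yields $\forall y P(y)$ directly by (DNE); the left disjunct yields $\exists x \neg P(x)$ by first deriving $\neg \exists x \neg P(x) \vdash_{\NSFp} \forall y P(y)$ (via $(\exists\textup{-glo I})$, $(\neg\textup{I}_\textup{P})$, (DNE) and $(\forall\textup{I}_\textup{P})$ on a fresh $y$), then taking the contrapositive to obtain $\neg \forall y P(y) \vdash_{\NSFp} \neg \neg \exists x \neg P(x)$, and finally applying (DNE) once more.

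To refute $\rho(A) = \exists x\, \wneg P(x) \lor \forall y\, P(y)$, by completeness of $\NSFp$ together with propositions \ref{proposition: Strong contraction} and \ref{proposition: Reduction of Wp to Wp2ets} it suffices to build a countermodel in $\mathcal{W}_{\textup{P}2\emptyset}$. Take $W = \langle \{r,k\}, r<k, \{d\}, J, v \rangle$ whose only atomic forcing is $k \models E(\overline{d})$: $W$ is preconstructive by construction, object-prevalent since $k \models E(\overline{d})$, and formula-prevalent automatically by persistence in a two-node frame. Evaluating $\rho(A)$ at $r$: the left disjunct $\exists x \wneg P(x)$ is locally quantified (because $\wneg P(x) = P(x) \to \bot$ contains the non-GN subformula $P(x)$, so $x$ does not occur only globally) and fails at $r$ because $r \not\models E(\overline{d})$; the right disjunct $\forall y P(y)$ is also locally quantified, and at the stage $r' = k$ one would need $k \models P(\overline{d})$, which fails. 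Hence $r \not\models \rho(A)$, so $\rho(A) \notin \SFQp$, and $A$ is the required witness. The only real subtlety, which drives the whole argument, is the quantificational asymmetry: global $\exists x$ over the GN formula $\neg P(x)$ bypasses the existence predicate, while local $\exists x$ over the non-GN formula $\wneg P(x)$ does not, and at the preconstructive root this mismatch cannot be bridged.
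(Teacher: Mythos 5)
Your proof is correct and follows essentially the same route as the paper's: the inclusion $\subseteq$ is the induction on the number of $\neg$-occurrences using corollary \ref{corollary: wneg to neg}, and strictness comes from a theorem whose global existential quantification over a $\GN$ matrix turns, once the $\neg$'s are uniformly restored to $\wneg$, into a local existential that fails at the root of a preconstructive two-node prevalent model. The only differences are minor: the paper's witness is $\exists x(\neg P(x) \to \neg P(x))$ against $\exists x(\wneg P(x) \to \wneg P(x))$, which needs no derivation at all, whereas you use $\exists x\,\neg P(x) \lor \forall y\,P(y)$; and your explicit observation that the unique $\neg$-free $\Psi$-ancestor of $A$ is $\rho(A)$ is a useful step that the paper's two-line argument leaves tacit.
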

\begin{proof}
  ($\subseteq$) By induction, for any $n$, $A \in \Psi^{n} [\SFQ_{\textup{\textbf{P}}}^{-\neg}]$ implies that $A \in \SFQp$ and it has $n$ $\neg$'s. Use that $\vdash_{\NSFp} F[\wneg B]$ implies $\vdash_{\NSFp} F[\neg B]$. ($\neq$) Consider $\exists x (\neg P(x) \to \neg P(x)) \in \SFQp$ and $\exists x (\wneg P(x) \to \wneg P(x)) \notin \SFQp$. 
\end{proof}
\noindent So $\SFQp$ does not reduce to $\SFQ_{\textup{\textbf{P}}}^{- \neg}$ in the sense that the complement of $\bigcup_{n=0}^{\infty} \Psi^{(n)} [\SFQ_{\textup{\textbf{P}}}^{- \neg}]$ is not empty, although we have not succeeded in characterising the complement recursively. 

Thus intuitively, while the expansion of the set of the $\wneg$-less theorems reaches $\SFQp$, that of the $\neg$-less theorems ($\SFQ_{\textup{\textbf{P}}}^{-\neg}$) does not. This suggests that $\neg$ is essential to some theorems, specifically, to those in the complement of $\bigcup_{n=0}^{\infty} \Psi^{(n)} [\SFQ_{\textup{\textbf{P}}}^{- \neg}]$, such as $\exists x (\neg P(x) \to \neg P(x))$. One may further expect that $\SFQ_{\textup{\textbf{P}}}^{- \neg}$ is approximable by intermediate logics, since its only strict finitistic factors are implication and universal quantification, which are intuitionistic connectives with a time-gap. This is the next section's subject.

\subsection{Relation between the $\neg$-less fragment and intermediate logics}
\label{section: Relation between the neg-less fragment and intermediate logics}

In this section, we look into the relations between $\SFQ_{\textup{\textbf{P}}}^{- \neg}$ and intermediate logics. It turns out that they have close connections, and one might be tempted to call the former the `intermediate' part of $\SFQp$. We hope the results in what follows shed light on how $\SFQp$ exceeds the part that is related to intermediate logics.

We will be identifying a logic with the set of its theorems. For a logic $\textbf{X}$, we write $\vdash_{\textup{\textbf{X}}}$ for derivability in an intended proof system. That of $\SFQp$ is $\NSFp$. By $\textbf{X} + A$, we mean the logic $\textbf{X}$ with $A$ as an axiom: i.e., $B \in \textbf{X} + A$ iff $A \vdash_{\textbf{X}} B$.

Let language $\mathcal{L}^{-\neg}$ be $\mathcal{L}$'s $\neg$-less fragment, and $\mathcal{L}^{-\neg E}$ be $\mathcal{L}^{-\neg}$'s $E$-less fragment. Since intermediate logics do not involve $E$, we assume $\CQC \subseteq \ClForm[\mathcal{L}^{-\neg E}]$ etc., and mainly discuss the $E$-less theorems of $\CQC$ etc. The set of the theorems of $\SFQp$ without $\neg$ and $E$ is denoted by $\SFQ_{\textup{\textbf{P}}}^{- \neg E}$. As we noted in section \ref{section: Language}, when $\neg$ is not involved, the only $\GN$ formulas are those built from $\bot$ and the connectives. So no global quantification is considered.

\subsubsection{Comparison with intermediate logics}\label{section: Comparison with intermediate logics}

We see that $\CQC$ corresponds to $\SFQ_{\textup{\textbf{P}}}^{- \neg}$ under putting $\wneg \wneg$ in front. In proposition \ref{proposition: Assertibility and validity in the prevalent models}, we saw that in a prevalent model, prevalence behaves classically, and validity has a simple characterisation. We can reproduce those results in terms of theorems.
\begin{proposition}\label{proposition: neg neg A in NSFp is classical}
  (i) $A \in \CQC$ iff $\wneg \wneg A \in \SFQ_{\textup{\textbf{P}}}^{-\neg E}$. (ii) If $A \in \ST_\textup{P}[\mathcal{L}^{-E}]$, then $A \in \CQC$ implies $A \in \SFQ_{\textup{\textbf{P}}}^{-\neg E}$.
\end{proposition}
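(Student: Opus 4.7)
The approach is to combine Proposition \ref{proposition: Assertibility and validity in the prevalent models}(i)~--- which says assertibility in a prevalent model obeys the classical truth clauses~--- with the soundness/completeness of $\NSFp$ and the reduction to two-node preconstructive prevalent models via Propositions \ref{proposition: Strong contraction} and \ref{proposition: Reduction of Wp to Wp2ets}. In such a reduced $W = \langle \{r,k\}, \leq, D, J, v\rangle$ the leaf essentially encodes a classical structure, and the root forces $\wneg\wneg A$ iff $k \models_W A$; so derivability of $\wneg\wneg A$ will correspond to classical validity of $A$.

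For (i), I would chain equivalences. Since $A \in \CQC \subseteq \ClForm[\mathcal{L}^{-\neg E}]$, the formula $\wneg\wneg A$ is $\neg$- and $E$-free, so $\wneg\wneg A \in \SFQ_{\textup{\textbf{P}}}^{-\neg E}$ iff $\vdash_{\NSFp} \wneg\wneg A$ iff $\models_{\mathcal{W}_{\textup{P}2\emptyset}}^V \wneg\wneg A$. Next, $\models_W^V \wneg\wneg A$ is equivalent to prevalence of $A$ in $W$, which by the formula prevalence property coincides with assertibility, and in a two-node model reduces to $k \models_W A$. So it remains to show that $k \models_W A$ in every $W \in \mathcal{W}_{\textup{P}2\emptyset}$ iff $A \in \CQC$. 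For ($\Rightarrow$), given $A \in \CQC$ and any such $W$, associate the classical $\mathcal{L}^{-\neg E}$-structure $\mathcal{M}_W$ with domain $D$ and $P^{\mathcal{M}_W} := P^{v(k)}$ (restricted to predicates in $A$). A straightforward induction, whose clauses are read directly off Proposition \ref{proposition: Assertibility and validity in the prevalent models}(i), yields $\mathcal{M}_W \models A$ iff $k \models_W A$; thus $A \in \CQC$ makes $k \models_W A$ throughout $\mathcal{W}_{\textup{P}2\emptyset}$. For ($\Leftarrow$), take a classical countermodel $\mathcal{M}$ to $A$ and construct $W_\mathcal{M} \in \mathcal{W}_{\textup{P}2\emptyset}$ with the domain of $\mathcal{M}$, $P^{v(k)} := P^{\mathcal{M}}$ for predicates occurring in $A$ (empty otherwise), $E^{v(k)} := D$, and empty valuations at the root; the strictness, finite verification, and both prevalence properties are immediate from the construction, and the same induction delivers $k \not\models_{W_\mathcal{M}} A$, hence $\wneg\wneg A \notin \SFQp$ by soundness.

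For (ii), assume $A \in \CQC$ and $A \in \ST_\textup{P}[\mathcal{L}^{-E}]$. By (i) we have $\wneg\wneg A \in \SFQp$, and since $\ST_\textup{P}[\mathcal{L}^{-E}] \subseteq \ST_\textup{P}[\mathcal{L}]$, rule (ST$_\textup{P}$) applies to give $A \in \SFQp$. The assumption $A \in \CQC \subseteq \ClForm[\mathcal{L}^{-\neg E}]$ also ensures $A$ is free of $\neg$ and $E$, placing $A$ in $\SFQ_{\textup{\textbf{P}}}^{-\neg E}$. The principal technical point is the correspondence between $W_\mathcal{M}$ and $\mathcal{M}$; the only mildly subtle bit is maintaining the finite verification condition for $W_\mathcal{M}$, which is handled by restricting the leaf's valuation to the finitely many predicates actually occurring in $A$.
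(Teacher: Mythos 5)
Your proposal is correct and follows essentially the same route as the paper: the paper's proof associates to each two-node prevalent model $W$ the classical model $\phi(W) = \langle D, J, v(k)\rangle$, notes that $\phi$ is surjective (your explicit construction of $W_{\mathcal{M}}$ from a classical countermodel), and proves by induction via Proposition \ref{proposition: Assertibility and validity in the prevalent models}(i) that classical validity coincides with assertibility over $\mathcal{W}_{\textup{P}2}$, with (ii) then following from (i) exactly as you derive it via (ST$_\textup{P}$). Your write-up merely makes explicit the reduction steps and the verification of the model conditions that the paper leaves implicit.
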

\begin{proof}
  (i) To each $W \in \langle \{r, k\}, \leq, D, J, v \rangle \in \mathcal{W}_{\textup{P}2}$ with $r < k$, associate a classical model $\phi(W) = \langle D, J, v(k) \rangle$. This correspondence $\phi$ is surjective. Prove by induction that $A$ is classically valid iff $\models_{\mathcal{W}_{\textup{P}2}}^A A$.
  (ii) Use (i).
\end{proof}
\noindent We can also involve $E$ in the scope of discussion. Let us consider $\CQC + \forall x E \subseteq \ClForm[\mathcal{L}]$.
\begin{corollary}\label{proposition: When a classical theorem is a theorem}
  (i) $A \in \CQC + \forall x E$ iff $\wneg \wneg A \in \SFQ_{\textup{\textbf{P}}}^{-\neg}$. (ii) If $A \in \ST_\textup{P}[\mathcal{L}]$, then $A \in \CQC + \forall x E$ implies $A \in \SFQ_{\textup{\textbf{P}}}^{-\neg}$.
\end{corollary}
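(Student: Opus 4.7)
The strategy is to extend the semantic argument of proposition \ref{proposition: neg neg A in NSFp is classical} by observing that the surjection $\phi \colon \mathcal{W}_{\textup{P}2} \to \{\text{classical models}\}$ used there actually lands on the classical models in which $E$ is interpreted as the full domain. Indeed, for any $W = \langle \{r, k\}, \leq, D, J, v \rangle \in \mathcal{W}_{\textup{P}2}$, object prevalence supplies, for each $d \in D$, some $l \geq r$ with $l \models_W E(\overline{d})$; persistence then forces $k \models_W E(\overline{d})$, so $E^{v(k)} = D$ and $\phi(W) \models \forall x E$. Conversely, any classical model $M$ with $M \models \forall x E$ arises as $\phi(W)$ for some $W \in \mathcal{W}_{\textup{P}2}$ built by the same recipe as in the earlier proof, restricting if needed to the finitely many predicates occurring in the formula of interest to honour the finite verification condition.

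With this refined correspondence in hand, the induction of proposition \ref{proposition: neg neg A in NSFp is classical}, whose base case treats $E$ as any other predicate, extends verbatim to formulas $A \in \ClForm[\mathcal{L}^{-\neg}]$ possibly containing $E$, yielding that $A$ is classically true in $\phi(W)$ iff $k \models_W A$. In any prevalent model, formula prevalence makes $\models_W^A A$ equivalent to $\models_W^V \wneg\wneg A$ (both express prevalence of $A$). Combining this with proposition \ref{proposition: Strong contraction} and the soundness–completeness of $\NSFp$ (proposition \ref{proposition: Strong completeness NSFp}) produces the chain
\[
A \in \CQC + \forall x E \iff {\models_{\mathcal{W}_{\textup{P}2}}^A} A \iff {\models_{\mathcal{W}_{\textup{P}}}^V} \wneg\wneg A \iff {\vdash_{\NSFp}} \wneg\wneg A,
\]
which is (i); the last formula lies in $\SFQ_{\textup{\textbf{P}}}^{-\neg}$ because $\CQC + \forall x E \subseteq \ClForm[\mathcal{L}^{-\neg}]$.

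Part (ii) then follows immediately: if $A \in \ST_\textup{P}[\mathcal{L}]$ and $A \in \CQC + \forall x E$, then (i) gives $\vdash_{\NSFp} \wneg\wneg A$, and rule (ST$_\textup{P}$) — legitimate because $A$ is an $\ST_\textup{P}$ formula — converts this to $\vdash_{\NSFp} A$; the $\neg$-freeness of $A$ places it in $\SFQ_{\textup{\textbf{P}}}^{-\neg}$. The main obstacle I anticipate is bookkeeping around $\CQC + \forall x E$ in the extended language: one must make precise that this set coincides with classical validity relative to models of $\forall x E$, and verify that the back-and-forth between those classical models and $\mathcal{W}_{\textup{P}2}$ still respects the finite verification condition, which is handled by restricting attention to the predicate symbols appearing in $A$.
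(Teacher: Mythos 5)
Your proposal is correct and follows essentially the same route as the paper, which proves the corollary by rerunning the argument of proposition \ref{proposition: neg neg A in NSFp is classical} using that $\CQC + \forall x E$ is sound and complete with respect to the classical models validating $\forall x E$ — exactly the refinement of $\phi$ you describe via object prevalence, with part (ii) obtained from (i) by (ST$_\textup{P}$). Your extra care about surjectivity of $\phi$ and the finite verification condition is the right bookkeeping and matches what the paper's terse proof implicitly relies on.
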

\begin{proof}
  Similar to proposition \ref{proposition: neg neg A in NSFp is classical}. $\CQC + \forall x E$ is sound and complete with respect to the class of the classical models where $\forall x E$ is true.
\end{proof}

We denote the forcing relation of $\IQC$ by $\Vdash$. The only difference from $\models$ (as defined in definition \ref{definition: Actual verification relation models_W}) is that $l \Vdash_W B \to C$ iff for any $l' \geq l$, $l' \Vdash_W B$ implies $l' \Vdash_W C$, and quantification is always treated as global\footnote{
Indeed, then $k \Vdash_W \forall x A$ iff for all $k' \geq k$ and $d \in D$, $k' \Vdash_W A[\overline{d}/x]$. Therefore this description of $\Vdash$ works as long as we consider constant domain frames only.
}.
Since every strict finitistic model is an intuitionistic model with restrictions, it makes sense to evaluate an $A \in \ClForm[\mathcal{L}]$ (including predicate $E$) in any $W \in \mathcal{W}$ under $\Vdash$. We will write $\Vdash_W^V$ etc. similarly to $\models_W^V$ etc.

We can show intermediate logic $\HTCD = \IQC + \textup{HTQ} + \textup{CD}$ is embeddable to $\SFQ_{\textup{\textbf{P}}}^{-\neg E}$ under a translation. HTQ, the axiom we call `quantified here-and-there', is the universal closure of $A \lor (A \to B) \lor \wneg B$; and CD, the `constant domain' axiom, is $\forall x (C \lor A) \to C \lor \forall x A$, where $x$ does not occur free in $C$. It is known that $\HTCD$ is sound and complete with respect to $\mathcal{W}_{2\textup{iCD}}$, the class of the 2-node intuitionistic models with constant domains\footnote{
For a proof, see \cite[p.52]{Ono1983}. $\HTCD$ is treated under the name `$J_* + D$'.
}. 
We note that $\SFQ_{\textup{\textbf{P}}}^{-\neg E} \not \subseteq \HTCD$, since $((A \to B) \to A) \to A \notin \HTCD$; and $\HTCD \not \subseteq \SFQ_{\textup{\textbf{P}}}^{-\neg E}$, since $\exists x (P(x) \to P(x)) \notin \SFQ_{\textup{\textbf{P}}}^{-\neg E}$.\footnote{
We call the propositional version of HTQ, $A \lor (A \to B) \lor \wneg B$, `HT'. Propositional intermediate logic $\HT = \IPC + \textup{HT}$ (the logic of `here-and-there') is known to be sound and complete with respect to the class of the intuitionistic 2-node models (with the `here' and the `there' nodes). Also, $\HT$ is the strongest proper intermediate logic: i.e., it is the unique predecessor of $\CPC$ in the lattice of the propositional intermediate logics under set-theoretical inclusion. We however do not know how strong $\HTCD$ is.
}

We define an operation $^\star$ on $\Fm[\mathcal{L}^{-\neg E}]$ as follows. $P^\star := P$ for every atom. $(A \square B)^\star := A^\star \square B^\star$ where $\square \in \{ \land, \lor, \to \}$. $(\forall x A)^\star := \forall x (A^\star)$. $(\exists x A)^\star := \wneg \wneg \exists x (A^\star)$. (We consider local quantification only: cf. right before section \ref{section: Comparison with intermediate logics}.) Intuitively, $^\star$ only weakens local existential quantification by removing the requirement of the existing witness. It attains the effect of putting $\neg \neg$ inside by putting $\wneg \wneg$ in front, via $\wneg \wneg \exists x A \dashv \vdash_{\NSFp} \neg \neg \exists x A \dashv \vdash_{\NSFp} \exists x \neg \neg A$. We note that plainly $(A[c/x])^\star = (A^\star)[c/x]$ for all $c \in \ClTerm[\mathcal{L}]$, if no quantificational $QxB$ occurs in $A$. 
\begin{lemma}\label{lemma: Lemma for the star operation}
  For any $W \in \mathcal{W}_{\textup{P}}$, (i) $\models_W^A A$ iff $\models_W^A A^\star$, and (ii) $\models_W^V A$ implies $\models_W^V A^\star$.
\end{lemma}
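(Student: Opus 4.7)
The plan is to prove both parts simultaneously by induction on the complexity of $A$, treating $^\star$ as behaving trivially on every connective except $\exists$, and exploiting proposition \ref{proposition: Assertibility and validity in the prevalent models} to translate between $\models^V$ and $\models^A$ inside prevalent models. By lemma \ref{lemma: Bound variables are always unique} we may assume bound variables are unique, which guarantees $(A[\overline{d}/x])^\star = (A^\star)[\overline{d}/x]$ for all $d \in D$; this auxiliary fact is used throughout the quantifier cases.

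For (i), the base case is immediate since $P^\star = P$. The $\land$, $\lor$, $\to$ cases follow by applying clauses (a)--(c) of proposition \ref{proposition: Assertibility and validity in the prevalent models} (i) on both sides and then invoking the induction hypothesis; note that in the $\to$ case there is nothing to do beyond the structural recursion because $(A\to B)^\star = A^\star \to B^\star$. For $\forall$, clause (e) of the same proposition reduces $\models_W^A \forall x A$ to $\models_W^A A[\overline{d}/x]$ for all $d$, and then the hypothesis plus the substitution identity above yields $\models_W^A \forall x (A^\star)$. The decisive case is $\exists x A$, where $(\exists x A)^\star = \wneg\wneg\exists x(A^\star)$: the IH together with clause (f) gives $\models_W^A \exists x A \iff \models_W^A \exists x (A^\star)$, so it suffices to show that in any prevalent $W$ and for any closed formula $C$, $\models_W^A C$ iff $\models_W^A \wneg\wneg C$. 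This is the key lemma: formula prevalence gives $\models_W^A C \Leftrightarrow \models_W^P C$, and unwinding the forcing condition of $\wneg\wneg$ shows $\models_W^A \wneg\wneg C \Leftrightarrow \models_W^P C$ as well (the forward direction takes the witness $k$ with $k\models\wneg\wneg C$ and applies it at $k$ itself, the backward direction shows $r \models \wneg\wneg C$ using prevalence of $C$).

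For (ii), each case pivots on clauses (a)--(f) of proposition \ref{proposition: Assertibility and validity in the prevalent models} (ii), which collapse $\models_W^V$ to $\models_W^A$ for the outermost connective. For $\land$, $\lor$, $\to$, $\forall$ this actually yields the biconditional, because (i) already provides the assertibility equivalence and clause (ii) can be run in reverse. The only place where the implication is not reversible is $\exists$: from $\models_W^V \exists x A$ we derive $\models_W^P \exists x A$, whence $\models_W^A \exists x A$ by formula prevalence, hence $\models_W^A \exists x(A^\star)$ by (i), hence $\models_W^P \exists x(A^\star)$ again by formula prevalence, which is exactly $\models_W^V \wneg\wneg \exists x (A^\star) = \models_W^V (\exists x A)^\star$.

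The main conceptual obstacle is recognising that the entire content of the star translation lives in this single equivalence $\models_W^A C \Leftrightarrow \models_W^A \wneg\wneg C$ available in $\mathcal{W}_\textup{P}$; once this is isolated, the induction is essentially bookkeeping. The technical obstacle is the substitution identity $(A[\overline{d}/x])^\star = (A^\star)[\overline{d}/x]$, which only requires that no quantifier in $A$ binds $x$, and is handled by the standard renaming afforded by lemma \ref{lemma: Bound variables are always unique}. The asymmetry between (i) and (ii) is unavoidable and explains why (ii) is stated as a one-way implication: validity of $\wneg\wneg\exists x (A^\star)$ at the root does not entail validity of $\exists x A$ at the root, since the root can fail to satisfy any instance of $E(\overline{d}) \land A[\overline{d}/x]$ even in a prevalent model.
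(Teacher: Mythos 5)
Your proof is correct and follows essentially the same route the paper intends (its own proof reads simply ``Induction''): a simultaneous induction on $A$ driven by Proposition \ref{proposition: Assertibility and validity in the prevalent models}, with the substitution identity $(A[\overline{d}/x])^\star = (A^\star)[\overline{d}/x]$ secured by unique bound variables and the $\exists$-case reduced to the equivalence of $\models_W^A C$ and $\models_W^A \wneg\wneg C$. The only slip is the parenthetical claim that the $\land$ and $\lor$ cases of (ii) yield biconditionals — that would require the (generally false) converse of (ii) for the conjuncts/disjuncts — but this aside does not affect the one-way statement actually being proved.
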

\begin{proof}
  Induction. 
\end{proof}

\begin{proposition}\label{proposition: HTQ and NSFp}
  (i) $\Vdash_{\mathcal{W}_{\textup{P}2}}^{V} A$ implies $\models_{\mathcal{W}_{\textup{P}2}}^V A^\star$. (ii) $[\HTCD]^\star \subseteq \SFQ_{\textup{\textbf{P}}}^{-\neg E}$.
\end{proposition}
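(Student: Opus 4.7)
The plan is to prove (i) by a simultaneous induction on the structure of $A \in \Fm[\mathcal{L}^{-\neg E}]$ (closed over $D$), establishing, for any $W = \langle \{r, k\}, \leq, D, J, v \rangle \in \mathcal{W}_{\textup{P}2}$ with $r < k$, the pair of statements
\begin{itemize}
  \item[(a)] $r \Vdash_W A$ implies $r \models_W A^\star$;
  \item[(b)] $k \Vdash_W A$ iff $k \models_W A^\star$.
\end{itemize}
Part (a) applied at the root is all (i) requires, since validity under both $\Vdash$ and $\models$ reduces to forcing at the root by persistence; (b) is what lets the argument for (a) bridge the time-gap in implication, universal quantification, and (via the $\wneg\wneg$ in front) existential quantification. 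Recall that since $A$ has no $\neg$, every quantifier in $A$ and $A^\star$ is local, and $^\star$ alters only the existential clauses.

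The leaf clause (b) works because $k$ is terminal in $\mathcal{W}_{\textup{P}2}$: the time-gap in $\to$ vanishes, local $\forall x A^\star$ reduces to $\bigwedge_{d \in D} A^\star[\overline{d}/x]$ using object prevalence $k \models E(\overline{d})$, and $(\exists x A)^\star = \wneg\wneg\exists x A^\star$ collapses because the outer $\wneg\wneg$ is idempotent at a leaf and the inner local $\exists$ again discharges its $E$-demand by prevalence. Each clause of (b) therefore matches its intuitionistic counterpart term-for-term via the induction hypotheses.

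For (a) the substantive cases are $\to$, $\forall$, and $\exists$ at $r$. For $\to$: given $r \Vdash A \to B$ and $k' \geq r$ with $k' \models A^\star$, $\models$-persistence gives $k \models A^\star$; (b) yields $k \Vdash A$; $r \Vdash A \to B$ forces $k \Vdash B$; (b) supplies $k \models B^\star$, so $k'' := k$ fulfils the SF clause. For $\forall$: the global $\Vdash$-clause gives $l \Vdash A[\overline{d}/x]$ for every $l \geq r$ and every $d \in D$; in checking $r \models E(\overline{d}) \to A^\star[\overline{d}/x]$, at $k' = r$ (when $r \models E(\overline{d})$) the induction hypothesis (a) delivers $r \models A^\star[\overline{d}/x]$, and at $k' = k$ object prevalence gives $k \models E(\overline{d})$ and (b) gives $k \models A^\star[\overline{d}/x]$. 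For $\exists$: a witness $d$ with $r \Vdash A[\overline{d}/x]$ persists to $k \Vdash A[\overline{d}/x]$; (b) and object prevalence yield $k \models E(\overline{d}) \land A^\star[\overline{d}/x]$, hence $k \models \exists x A^\star$; because every $k' \geq r$ has $k$ above it, this secures $r \models \wneg\wneg \exists x A^\star = (\exists x A)^\star$. The conceptual point is that the $\wneg\wneg$ inserted by $^\star$ in front of existentials precisely absorbs the time-gap of strict finitistic $\models$, pushing witnesses to the leaf where (b) gives an exact match.

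For (ii), let $A \in \HTCD$. The cited soundness of $\HTCD$ with respect to $\mathcal{W}_{2\textup{iCD}}$ gives $\Vdash_{\mathcal{W}_{2\textup{iCD}}}^V A$; since any $W \in \mathcal{W}_{\textup{P}2}$ is a 2-node constant-domain intuitionistic model once its extra strict finitistic data are forgotten, this restricts to $\Vdash_{\mathcal{W}_{\textup{P}2}}^V A$. By (i) we then obtain $\models_{\mathcal{W}_{\textup{P}2}}^V A^\star$, which lifts via proposition \ref{proposition: Strong contraction} (with $\Gamma = \emptyset$) to $\models_{\mathcal{W}_{\textup{P}}}^V A^\star$; strong completeness (proposition \ref{proposition: Strong completeness NSFp}) then yields $\vdash_{\NSFp} A^\star$, and as $^\star$ introduces neither $\neg$ nor $E$, we conclude $A^\star \in \SFQ_{\textup{\textbf{P}}}^{-\neg E}$. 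The main obstacle I anticipate is the careful verification of (b) at the existential case, where one must confirm that the outer $\wneg\wneg$ really becomes inert at a leaf of $\mathcal{W}_{\textup{P}2}$ and that the $E(\overline{d})$-requirement of the local-existential clause is discharged solely by object prevalence rather than by anything in the vocabulary of $A$; once this is in hand the root-level reasoning of (a) propagates through the remaining clauses smoothly.
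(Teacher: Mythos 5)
Your proposal is correct and follows essentially the same route as the paper: fix a two-node model, run a simultaneous induction establishing an exact match at the leaf and the one-directional implication at the root, and then derive (ii) from the completeness of $\HTCD$ for $\mathcal{W}_{2\textup{iCD}}$, the inclusion $\mathcal{W}_{\textup{P}2} \subseteq \mathcal{W}_{2\textup{iCD}}$, proposition \ref{proposition: Strong contraction} and the completeness of $\NSFp$. The only cosmetic difference is that you build $^\star$ directly into the leaf clause ($k \Vdash A$ iff $k \models A^\star$), where the paper proves $k \Vdash A$ iff $k \models A$ and converts via lemma \ref{lemma: Lemma for the star operation}.
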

\begin{proof}
  (i) Fix one $W = \langle \{r, k\}, \leq, D, J, v \rangle \in \mathcal{W}_{\textup{P}2}$ with $r < k$. Then, prove that $k \Vdash A$ iff $k \models A$, and $r \Vdash A$ implies $r \models A^\star$, using lemma \ref{lemma: Lemma for the star operation}. (ii) Use $\vdash_{\HTCD} A$ iff $\Vdash_{\mathcal{W}_{2\textup{iCD}}}^V A$, $\mathcal{W}_{\textup{P}2} \subseteq \mathcal{W}_{2\textup{iCD}}$ and (i). 
\end{proof}
\noindent 
We take this as a quantificational version of \cite{Yamada2023}'s proposition 2.26.



\subsubsection{Prevalent models as intuitionistic nodes}\label{section: A prevalent model as an intuitionistic node}
We will see a strong similarity between $\SFQ_{\textup{\textbf{P}}}^{-\neg}$ and $\IQC$. We will introduce a `generation structure' as a collection of finite prevalent models equipped with an order relation, and show that it can be viewed as an intuitionistic model. (For any $A \in \ClForm[\mathcal{L}^{- \neg}]$, plainly  by completeness and proposition \ref{proposition: Strong contraction},  $A \in \SFQ_{\textup{\textbf{P}}}^{-\neg}$ iff $A$ is valid in all finite prevalent models). We will provide an estimation of the totality $Th(\mathcal{G})$ of the formulas valid in the sense of the generation structures: $[\IQC]^\star \subsetneq Th(\mathcal{G})^{-\neg E} \subsetneq \IQC$ (proposition \ref{proposition: T Th G and IQC}). We end our investigation by showing a class of generation structures whose validity coincides with $\IQC$ (proposition \ref{corollary: Th GC is IQC}). We take this result as a quantificational version of \cite{Yamada2023}'s proposition 2.32, i.e., the formalised identity principle.

We define the `generation semantics' using the notions of the strict finitistic semantics. Let $\mathcal{W}_{\textup{P} \fin} \subseteq \mathcal{W}_{\textup{P}}$ be the subclass of the finite models, and $\mathcal{U} \, (\neq \emptyset) \subseteq \mathcal{W}_{\textup{P} \fin}$ be at most countable. We assume in general each $W \in \mathcal{U}$ consists of $\langle K_W, \leq_W, D_W, J_W, v_W \rangle$, where $J_W = \langle J_W^1, J_W^2 \rangle$. A binary relation $\preceq \subseteq \mathcal{U}^2$ is a \textit{generation order} on $\mathcal{U}$ if $W \preceq W'$ iff the following hold\footnote{
Not to be confused with $\preccurlyeq$ for the well-ordering on the canonical model of $\SFQ$ in section \ref{section: The proof of completeness}, definition \ref{definition: Well-ordering preceq}.
}. 
(i) $K_W \subseteq K_{W'}$. (ii) For all $k, k' \in K_W$, $k \leq_W k'$ iff $k \leq_{W'} k'$. (iii) $D_W \subseteq D_{W'}$. (iv) $J_W^1(c) = J_{W'}^1(c)$ for all $c \in \ClTerm[\mathcal{L}(D_W)]$, and $J_W^2(f) \subseteq J_{W'}^2 (f)$ for all $f \in \Func$. (v) For all $k \in K_W$ and $P \in \Pred[\mathcal{L}]$, $P^{v_W(k)} \subseteq P^{v_{W'}(k)}$. Plainly, a generation order is a partial order on $\mathcal{U}$. $G = \langle \mathcal{U}, \preceq \rangle$ is a \textit{generation structure} (\textit{g-structure}) if $\preceq$ is a generation order. We will only be considering rooted tree-like generation structures with at most countable branchings and height at most $\omega$\footnote{
These are the same conditions on the intuitionistic frames (cf. section \ref{section: Semantics}).
}.
We denote the class of all generation structures by $\mathcal{G}$. We may write $R_G$ for a generation structure $G$'s root, and $r_G$ for $R_G$'s root. Also, we let $K_G = \{ \langle W, k \rangle \mid W \in \mathcal{U} \land k \in K_W \}$ and $D_G = \bigcup_{W \in \mathcal{U}} D_W$.

Conceptually, a $G = \langle \mathcal{U}, \preceq \rangle \in \mathcal{G}$ models all possible constructions and verificatory developments of a cognitive (possibly communal) agent $\mathcal{U}$ through generations. $\prec$ is the `earlier-than' relation on the generations. If $W \preceq W'$, then $W'$ has more objects constructible and more facts verifiable. $W'$ contains all nodes of $W$ and their relevant information: a later generation has more cognitive power and keeps all the old methods of construction and verification.

Given a $G = \langle \mathcal{U}, \preceq \rangle \in \mathcal{G}$, we induce, from the valuations of the elements of $\mathcal{U}$, a valuation $v_G$ that assigns to each $P \in \Pred[\mathcal{L}]$ its extension at each `node-in-generation' $\langle W, k \rangle \in K_G$: $P^{v_G(W, k)} := P^{v_W(k)}$. Then $v_G$ persists in both $G$ and all $W$.
\begin{proposition}
  If $W \preceq W'$ and $k \leq_{W'} k'$, then $P^{v_G(W, k)} \subseteq P^{v_G(W', k')}$.
\end{proposition}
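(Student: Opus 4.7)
The plan is to unfold the definition of $v_G$ and factor the required inclusion through the intermediate valuation $v_{W'}$ at the node $k$. By the definition of $v_G$ given just above, $P^{v_G(W,k)} = P^{v_W(k)}$ and $P^{v_G(W',k')} = P^{v_{W'}(k')}$, so it suffices to show $P^{v_W(k)} \subseteq P^{v_{W'}(k)} \subseteq P^{v_{W'}(k')}$.

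For the first inclusion, I will appeal directly to clause (v) of the definition of a generation order: since $W \preceq W'$ and $k \in K_W$ (which is forced by $\langle W,k \rangle \in K_G$), we immediately get $P^{v_W(k)} \subseteq P^{v_{W'}(k)}$. For the second inclusion, the point is that $W' \in \mathcal{U} \subseteq \mathcal{W}_{\textup{P}} \subseteq \mathcal{W}$ is itself a strict finitistic model, whose valuation $v_{W'}$ is by definition monotone along $\leq_{W'}$; hence $k \leq_{W'} k'$ yields $P^{v_{W'}(k)} \subseteq P^{v_{W'}(k')}$. Composing the two gives the desired inclusion.

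The only mild subtlety worth verifying is that the step $k \leq_{W'} k'$ can actually be combined with monotonicity inside $W'$, that is, that $k$ really lives in $K_{W'}$ so that $v_{W'}(k)$ is defined. This is guaranteed by clause (i) of the generation order, which gives $K_W \subseteq K_{W'}$, together with $k \in K_W$ from $\langle W,k\rangle \in K_G$. No obstacle of substance arises; the proposition is essentially a two-step chain of monotonicities, one across generations and one within a single generation.
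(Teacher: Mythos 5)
Your proof is correct and follows essentially the same route as the paper, which compresses the whole argument into the remark ``by persistence of $v_W$ in each $W$'': both arguments reduce the claim to the cross-generation inclusion from clause (v) of the generation order followed by monotonicity of $v_{W'}$ along $\leq_{W'}$. Your version merely spells out the intermediate step $P^{v_{W'}(k)}$ and the fact that $k \in K_{W'}$, which the paper leaves implicit.
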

\begin{proof}
  By persistence of $v_W$ in each $W$. 
\end{proof}
\noindent We consider a new forcing relation between $K_G$ and $\Fm[\mathcal{L}^{-\neg}(D_W)]$.
\begin{definition}[Generation forcing relation $\Vvdash_G$]
  Let $\langle W, k \rangle \in K_G$. $W, k \Vvdash_{G} \top$ and $W, k \not \Vvdash_G \bot$. For all $P \in \Pred[\mathcal{L}]$ and $c \in \ClTerm[\mathcal{L}(D_W)]$, $W, k \Vvdash_G P(c)$ iff $\langle J_W(c) \rangle \in P^{v_G(W, k)}$. For all $A, B \in \ClForm[\mathcal{L}^{-\neg}(D_W)]$,
  \begin{enumerate}
    \item $W, k \Vvdash_G A \land B$ iff $W, k \Vvdash_G A$ and $W, k \Vvdash_G B$;
    \item $W, k \Vvdash_G A \lor B$ iff $W, k \Vvdash_G A$ or $W, k \Vvdash_G B$;
    \item $W, k \Vvdash_G A \to B$ iff for any $W' \succeq W$ and $k' \, (\in K_{W'}) \geq_{W'} k$, if $W', k' \Vvdash_G A$, then there is a $k'' \, (\in K_{W'}) \geq_{W'} k'$ such that $W', k'' \Vvdash_G B$;
    \item $W, k \Vvdash_G \forall x A$ iff for any $W' \succeq W$ and $d \in D_{W'}$, $W', k \Vvdash_G E(\overline{d}) \to A[\overline{d}/x]$; and
    \item $W, k \Vvdash_G \exists x A$ iff there is a $d \in D_W$ such that $W, k \Vvdash_G E(\overline{d}) \land A[\overline{d}/x]$.
  \end{enumerate}
  For an open $A$, $W, k \Vvdash_G A$ if $W, k \Vvdash_G A^*$, where $A^*$ is the universal closure.
\end{definition}
\noindent We say $A \in \Fm[\mathcal{L}^{-\neg}]$ is \textit{valid in a $G \in \mathcal{G}$} if $W, k \Vvdash_G A$ for all $\langle W, k \rangle \in K_G$. We write $\Vvdash_G^V A$ for this; and $\Vvdash_{\mathcal{G}}^V A$ for that $\Vvdash_G^V A$ for all $G \in \mathcal{G}$.

$\Vvdash_G$ thus defined formalises the agent's actual verifiability through their generations from our perspective. $W, k \Vvdash_G A \to B$ means that $B$ comes soon after $A$ in the sense that it comes within the same generation. $W, k \Vvdash_G \wneg A$ is local negation that is forward-looking: it means that $A$ is practically unverifiable no matter how the agent grows from $\langle W, k \rangle$ onwards.

Generation structures carry over the semantic properties of prevalent models. The strictness holds, i.e., $W, k \Vvdash_G P(c)$ implies $W, k \Vvdash_G E(c')$ for all subterms $c'$ of $c$. Also, $\Vvdash_G$ persists in $G$ and all $W$ just as $v_G$. Therefore $\Vvdash_G^V A$ iff $R_G, r_G \Vvdash_G A$. We see that each generation $W$ has both the object prevalence and the formula prevalence inside.
\begin{proposition}[Prevalence in generation]
  (i) For all $d \in D_W$ and $k \in K_W$, there is a $k' \, (\in K_W) \geq k$ such that $W, k' \Vvdash_G E(\overline{d})$. (ii) If $W, k \Vvdash_G A$ for some $k \in K_W$, then for any $l \in K_W$, there is an $l' \, (\in K_W) \geq_W l$ such that $W, l' \Vvdash_G A$.
\end{proposition}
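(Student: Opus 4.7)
The plan is to reduce both claims to the prevalence properties of the individual models $W \in \mathcal{U}$, each of which belongs to $\mathcal{W}_\textup{P}$. The linking observation is the definitional agreement $P^{v_G(W,k)} = P^{v_W(k)}$ for all $P \in \Pred$ and $k \in K_W$: at the atomic level, the generation forcing $\Vvdash_G$ at $\langle W, k \rangle$ coincides with the strict finitistic forcing $\models_W$ at $k$. Part (i) follows immediately from this and $W$'s object prevalence: $\models_W^P E(\overline{d})$ holds for every $d \in D_W$ by definition of $\mathcal{W}_\textup{P}$, and the statement transfers verbatim to $\Vvdash_G$.

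For part (ii) I would prove a strengthened claim by induction on the complexity of the formula: for every $W \in \mathcal{U}$, every $A \in \ClForm[\mathcal{L}^{-\neg}(D_W)]$, and all $k, l \in K_W$, $W, k \Vvdash_G A$ implies that there is an $l' \, (\in K_W) \geq_W l$ with $W, l' \Vvdash_G A$. The atomic base case is just the atomic instance of $W$'s formula prevalence. Conjunction, disjunction, and existential quantification are handled by routine combinations of the inductive hypothesis with persistence inside $W$; for $\exists x A'$, given $W, k \Vvdash_G E(\overline{d}) \land A'[\overline{d}/x]$, two applications of the inductive hypothesis together with persistence yield a common $l' \geq_W l$ forcing both conjuncts.

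The substantive cases are implication and universal quantification, since their forcing conditions quantify over later generations $V \succeq W$. For $A \to B$, given $W, k \Vvdash_G A \to B$ and $l \in K_W$, I would verify $W, l \Vvdash_G A \to B$ directly. Fix any $V \succeq W$ and $k' \in K_V$ with $k' \geq_V l$ and $V, k' \Vvdash_G A$. By the inductive hypothesis applied to $A$ at $V$ (using $k \in K_W \subseteq K_V$), some $k^* \geq_V k$ satisfies $V, k^* \Vvdash_G A$; $W, k \Vvdash_G A \to B$ then produces $k^{**} \geq_V k^*$ with $V, k^{**} \Vvdash_G B$; and a second application of the inductive hypothesis at $V$ for $B$ yields a $k'' \geq_V k'$ with $V, k'' \Vvdash_G B$. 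The $\forall x A'$ case proceeds analogously, with one extra layer of generation: after fixing $V \succeq W$, $d \in D_V$, $V' \succeq V$, and $k' \in K_{V'}$ with $k' \geq_{V'} l$ and $V', k' \Vvdash_G E(\overline{d})$, atomic prevalence of $E(\overline{d})$ in $V'$ furnishes a $k^* \geq_{V'} k$ forcing $E(\overline{d})$; then $W, k \Vvdash_G \forall x A'$ supplies a successor of $k^*$ in $V'$ forcing $A'[\overline{d}/x]$, and the inductive hypothesis for $A'[\overline{d}/x]$ at $V'$ completes the argument.

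The main obstacle is the complexity bookkeeping in the $\forall$ case. The forcing condition for $\forall x A'$ unfolds via $E(\overline{d}) \to A'[\overline{d}/x]$, which is structurally more complex than $\forall x A'$ itself, so one cannot naively invoke the inductive hypothesis on this implication as a whole. The remedy is to apply the inductive hypothesis only to $A'[\overline{d}/x]$, whose complexity matches that of $A'$, and to treat $E(\overline{d})$ via the atomic prevalence of the ambient $V' \in \mathcal{W}_\textup{P}$. The uniform formulation of the inductive hypothesis over all $W \in \mathcal{U}$ is equally essential, since the $\to$ and $\forall$ clauses consult later generations, at which the prevalence statement must already be available.
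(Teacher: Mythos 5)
Your proposal is correct and follows essentially the same route as the paper, whose proof of (i) is ``by object prevalence of $W \in \mathcal{W}_{\textup{P}}$'' and of (ii) is ``induction on $A$, using (i) and persistence of $\Vvdash_G$''. Your write-up simply supplies the details the paper leaves implicit, in particular the uniform quantification of the inductive hypothesis over all generations (needed for the $\to$ and $\forall$ clauses) and the observation that the $\forall$-clause must be handled by applying the hypothesis to $A'[\overline{d}/x]$ rather than to the unfolded implication.
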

\begin{proof}
  (i) By object prevalence of $W \in \mathcal{W}_{\textup{P}}$. (ii) Induction on $A$. Use (i) and persistence of $\Vvdash_G$. 
\end{proof}
\noindent The following states (i) that accordingly, we could have defined the ($\forall$)-clause by the global condition, i.e. with $\top$ in place of $E(\overline{d})$, and (ii) that existential quantification on the level of assertibility in a generation can be treated as if it were global.
\begin{corollary}\label{corollary: The distinction is lost for forall and exists, in a sense}
  (i) For any $\langle W, k \rangle \in K_G$, $W, k \Vvdash_G \forall x A$ iff for any $W' \succeq W$ and $d \in D_{W'}$, $W', k \Vvdash_G \top \to A(\overline{d})$. (ii) For any $W \in \mathcal{U}$, $W, k \Vvdash_G \exists x A$ for some $k \in K_W$ iff $W, k \Vvdash_G A(\overline{d})$ for some $d \in D_W$ and $k \in K_W$.
\end{corollary}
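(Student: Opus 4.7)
My plan is to unfold the definition of $\Vvdash_G$ at the quantifier clauses and convert between the local form (with $E(\overline{d})$) and the effectively-global form (with $\top$) by invoking the two prevalence-in-generation properties from the immediately preceding proposition, together with persistence of $\Vvdash_G$.

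For part (i), fix $\langle W, k \rangle \in K_G$. The ($\impliedby$) direction is the easy one: if $W', k \Vvdash_G \top \to A[\overline{d}/x]$ for every $W' \succeq W$ and $d \in D_{W'}$, then for any $W'' \succeq W'$ and $k' \geq_{W''} k$ with $W'', k' \Vvdash_G E(\overline{d})$, we certainly have $W'', k' \Vvdash_G \top$, so by the $\top$-clause we get some $k'' \geq_{W''} k'$ with $W'', k'' \Vvdash_G A[\overline{d}/x]$, which is what the $E(\overline{d})$-clause demands. For ($\implies$), assume the defining condition $W', k \Vvdash_G E(\overline{d}) \to A[\overline{d}/x]$ for every $W' \succeq W$ and $d \in D_{W'}$. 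Given arbitrary $W' \succeq W$, $d \in D_{W'}$, $W'' \succeq W'$, and $k' \geq_{W''} k$, we need $k'' \geq_{W''} k'$ with $W'', k'' \Vvdash_G A[\overline{d}/x]$. Since $D_{W'} \subseteq D_{W''}$ by clause (iii) of the definition of generation order, prevalence in generation for $W''$ supplies some $k_0 \geq_{W''} k'$ with $W'', k_0 \Vvdash_G E(\overline{d})$. Applying the hypothesis at $W''$ (note $W'' \succeq W$) with node $k_0$ then yields the required $k'' \geq_{W''} k_0 \geq_{W''} k'$ forcing $A[\overline{d}/x]$.

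For part (ii), the ($\implies$) direction is immediate: the defining clause for $W, k \Vvdash_G \exists x A$ already gives some $d \in D_W$ with $W, k \Vvdash_G A[\overline{d}/x]$, so dropping the $E(\overline{d})$ conjunct is harmless. For ($\impliedby$), suppose $W, k \Vvdash_G A[\overline{d}/x]$ for some $d \in D_W$ and $k \in K_W$. Apply prevalence in generation (i) to $W$ to obtain $k_1 \in K_W$ with $k \leq_W k_1$ and $W, k_1 \Vvdash_G E(\overline{d})$. Persistence of $\Vvdash_G$ inside $W$ (which holds by a routine induction noted just before the statement) upgrades $W, k \Vvdash_G A[\overline{d}/x]$ to $W, k_1 \Vvdash_G A[\overline{d}/x]$. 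Hence $W, k_1 \Vvdash_G E(\overline{d}) \land A[\overline{d}/x]$, which gives $W, k_1 \Vvdash_G \exists x A$ by the defining clause.

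The proof is essentially bookkeeping rather than a new idea: the only step that requires genuine attention is the ($\implies$) direction of part (i), where one must carefully track that the witness $d$ lies in the domain of the enlarged generation $W''$ (so that prevalence in generation is applicable there) before invoking the hypothesis. No further obstacle arises, since the other clauses of the generation forcing relation do not enter.
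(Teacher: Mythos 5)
Your proof is correct and follows the same route the paper intends: the paper's own proof is just the one-line remark ``By prevalence in generation,'' and your argument is exactly the careful unfolding of that remark, using object prevalence inside a generation (together with persistence) to pass between the $E(\overline{d})$-guarded and $\top$-guarded forms of the quantifier clauses. No gaps.
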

\begin{proof}
  By prevalence in generation. 
\end{proof}
\noindent Also, implication and universal quantification in a generation behave just as in a prevalent model (cf. proposition \ref{proposition: Assertibility and validity in the prevalent models}).
\begin{corollary}\label{corollary: Assertibility of implication in g-structure is validity}
  (i) $W, r_G \Vvdash_G A \to B$ iff $W, k \Vvdash_G A \to B$ for some $k \in K_W$. (ii) $W, r_G \Vvdash_G \forall x A$ iff $W, k \Vvdash_G \forall x A$ for some $k \in K_W$.
\end{corollary}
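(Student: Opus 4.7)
The plan is to establish (i) first and then reduce (ii) to (i) by unpacking the $(\forall)$-clause. In both parts, the $(\Rightarrow)$ direction is immediate: since $R_G \preceq W$ for every $W \in \mathcal{U}$ (as $R_G$ is the root of $\langle \mathcal{U}, \preceq \rangle$), the generation order yields $K_{R_G} \subseteq K_W$ and so $r_G \in K_W$; taking $k := r_G$ suffices.

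For the $(\Leftarrow)$ direction of (i), I would fix $k \in K_W$ with $W, k \Vvdash_G A \to B$ and verify the $(\to)$-clause at $\langle W, r_G \rangle$. Let $W' \succeq W$ and $l \in K_{W'}$ with $l \geq_{W'} r_G$ satisfy $W', l \Vvdash_G A$; the goal is to produce an $l' \geq_{W'} l$ with $W', l' \Vvdash_G B$. By persistence of $\Vvdash_G$ along $\preceq$, one has $W', k \Vvdash_G A \to B$. The critical move is then to invoke prevalence in generation inside $W'$ (which is prevalent, since $\mathcal{U} \subseteq \mathcal{W}_{\textup{P}\fin}$): from $W', l \Vvdash_G A$ one obtains, for any designated node of $K_{W'}$, an assertion of $A$ above it. Applying this with the designated node taken to be $k$, we get $m \geq_{W'} k$ with $W', m \Vvdash_G A$. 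The implication at $k$ then yields an $m' \geq_{W'} m$ with $W', m' \Vvdash_G B$, and a second application of prevalence in generation — this time to $B$ asserted at $m'$, with the designated node taken to be $l$ — supplies the required $l' \geq_{W'} l$ with $W', l' \Vvdash_G B$.

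For the $(\Leftarrow)$ of (ii), assume $W, k \Vvdash_G \forall x A$ for some $k \in K_W$, and fix arbitrary $W' \succeq W$ and $d \in D_{W'}$. The $(\forall)$-clause at $\langle W, k \rangle$ gives $W', k \Vvdash_G E(\overline{d}) \to A[\overline{d}/x]$, and since $k \in K_W \subseteq K_{W'}$, applying (i) inside $W'$ to this implication yields $W', r_G \Vvdash_G E(\overline{d}) \to A[\overline{d}/x]$; as $W'$ and $d$ were arbitrary, $W, r_G \Vvdash_G \forall x A$ follows. The main obstacle is the shuttling step in (i): the hypothesis $W, k \Vvdash_G A \to B$ only controls nodes of $W'$ lying above $k$, whereas we must land above $l$, and $k$ and $l$ need not be $\leq_{W'}$-comparable. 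The two passes through prevalence in generation — moving the assertion of $A$ up to above $k$, pushing through the implication to obtain $B$, and then relocating the assertion of $B$ above $l$ — are what bridge this gap.
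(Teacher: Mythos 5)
Your proof is correct and follows the same route as the paper, whose entire proof reads ``(i) Follows from prevalence in generation. (ii) Use (i).'' Your two passes through prevalence in generation (relocating the assertion of $A$ above $k$, then relocating the resulting assertion of $B$ above $l$) are exactly the details the paper leaves implicit, and your reduction of (ii) to (i) via the $(\forall)$-clause is the intended one.
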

\begin{proof}
  (i) Follows from prevalence in generation. (ii) Use (i). 
\end{proof}

Now, we will establish that a generation structure induces a corresponding intuitionistic model, and (partly) vice versa. Let $\mathcal{I}$ denote the class of the intuitionistic models of $\mathcal{L}^{-\neg}$ with the finite verification condition (but with no condition for $E$). We note that plainly, for any $A \in \ClForm[\mathcal{L}^{-\neg E}]$, $\vdash_{\IQC} A$ iff $\Vdash_I^V A$ for all $I \in \mathcal{I}$. 

First, we see that a $G = \langle \mathcal{U}, \preceq \rangle \in \mathcal{G}$ gives rise to an $I \in \mathcal{I}$. Given $G$, we construct a tuple $I_G = \langle \mathcal{U}, \preceq, \mathcal{D}, \mathcal{J}, v \rangle$ from the assertible formulas in each $W \in \mathcal{U}$ as follows. (i) Define a function $\mathcal{D}: \mathcal{U} \to \mathcal{P}(D_G)$ by $\mathcal{D}(W) := D_W$. (ii) Let $\mathcal{J} = \langle J_W \rangle_{W \in \mathcal{U}}$. (iii) Define a valuation $v$ by $P^{v(W)} := \bigcup_{k \in K_W} P^{v_W(k)}$ for each $P \in \Pred[\mathcal{L}]$ (including $E$). Then plainly $v$ persists in $G$, and $I_G$ is an intuitionistic model. Also, since every $W$ is finite, $I_G$ satisfies the finite verification condition. Therefore $I_G \in \mathcal{I}$. So the intuitionistic forcing relation $W \Vdash_{I_G} A$ (cf. section \ref{section: Comparison with intermediate logics}) makes sense for each $W \in \mathcal{U}$ and $A \in \Fm[\mathcal{L}^{-\neg}(D(W))]$; and $\Vdash_{I_G}$ persists in $G$. By $\Vdash_{I_G}^V A$, we mean that $W \Vdash_{I_G} A$ for all $W \in \mathcal{U}$. Then $\Vdash_{I_G}^V A$ iff $R_G \Vdash_{I_G} A$. We note that $\Vdash_{I_G} \forall x E$ holds for any $G \in \mathcal{G}$, by object prevalence of each $W \in \mathcal{U}$. $I_G$ corresponds to $G$ in the following sense.
\begin{proposition}\label{proposition: G to I transformation}
  $W \Vdash_{I_G} A$ iff there is a $k \in K_W$ such that $W, k \Vvdash_G A$.
\end{proposition}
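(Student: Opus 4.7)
The plan is to proceed by induction on the complexity of $A$, where for each case we need to bridge the ``exists $k \in K_W$'' quantifier on the $\Vvdash_G$-side with the Kripke-style quantification on the $\Vdash_{I_G}$-side. The atomic case is immediate from the definitions $P^{v(W)} := \bigcup_{k \in K_W} P^{v_W(k)}$ and $P^{v_G(W,k)} := P^{v_W(k)}$. For the conjunction case, the forward direction uses the induction hypothesis to obtain witnesses $k_1, k_2 \in K_W$ for $A$ and $B$ separately, and then prevalence in generation (applied twice, first for $A$ starting above $k_1$, then for $B$ starting above that node, combined with persistence of $\Vvdash_G$) yields a single node forcing both. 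Disjunction is straightforward in both directions. A preliminary routine step, which I will state and prove by a short induction before the main argument, is that $\Vvdash_G$ persists across generations: if $W \preceq W'$ and $k \in K_W$, then $W, k \Vvdash_G A$ implies $W', k \Vvdash_G A$; this follows from persistence of $v_G$ stated after its definition.

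The crucial cases are implication and universal quantification. For $\to$, the forward direction takes $k := r_W$: given $W' \succeq W$ and $k' \geq_{W'} r_W$ with $W', k' \Vvdash_G A$, the induction hypothesis at $W'$ gives $W' \Vdash_{I_G} A$, hence $W' \Vdash_{I_G} B$, hence a node $m \in K_{W'}$ with $W', m \Vvdash_G B$ (by IH again), and prevalence in generation at $W'$ shifts this witness above $k'$. The backward direction takes $W' \succeq W$ with $W' \Vdash_{I_G} A$, uses IH to get $l \in K_{W'}$ with $W', l \Vvdash_G A$, applies prevalence in generation to find $l' \geq_{W'} k$ still forcing $A$, then uses cross-generation persistence to transport $W, k \Vvdash_G A \to B$ to $W'$, yielding some $l'' \geq_{W'} l'$ with $W', l'' \Vvdash_G B$; IH closes the loop. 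The $\forall$ case is analogous but additionally uses object prevalence in each $W'$: to obtain $W', l' \Vvdash_G E(\overline{d})$ at a node above $k$, so that the $E(\overline{d}) \to A(\overline{d})$ clause can actually fire.

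Finally, for $\exists$, the forward direction starts from $d \in D_W$ with $W \Vdash_{I_G} A(\overline{d})$, uses IH and object prevalence of $W$ (as $W \in \mathcal{W}_\textup{P}$) together with persistence of $\Vvdash_G$ to locate a single $l \in K_W$ where both $E(\overline{d})$ and $A(\overline{d})$ hold. The backward direction unpacks $W, k \Vvdash_G \exists x A$ to extract $d \in D_W = \mathcal{D}(W)$ with $W, k \Vvdash_G A(\overline{d})$ and invokes IH.

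The main obstacle is the $\to$ and $\forall$ cases, because the intuitionistic clause demands an implication between conditions at arbitrary later generations $W'$, whereas the generation-forcing clause delivers only a ``nearby-future'' consequent $k''$ inside $W'$. Bridging this asymmetry requires using both prevalence properties of the individual prevalent models $W'$ together with cross-generation persistence; this is exactly what makes generation structures behave like intuitionistic models on the $\neg$-free fragment.
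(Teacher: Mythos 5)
Your proposal is correct and follows essentially the same route as the paper's proof: induction on $A$, with the binary and quantifier cases handled by combining the two prevalence-in-generation properties of each $W'$ with persistence of $\Vvdash_G$ (within a generation and across $\preceq$), and with the root of $W$ serving as the witness node in the forward $\to$ and $\forall$ cases. The cross-generation persistence you propose to prove separately is already asserted in the paper just before the prevalence-in-generation proposition, so nothing further is needed.
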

\begin{proof}
  Induction. Use prevalence in generation. 
\end{proof}
\noindent Thus the intuitionistic forcing relation on $I_G$ stands for the assertibility in each of the generations that comprise $G$.

Conversely, let $I = \langle U^*, \preceq^*, D^*, J^*, v^* \rangle \in \mathcal{I}$, where $J^* = \langle \langle J_U^{*1}, J_U^{*2} \rangle \rangle_{U \in U^*}$. We will define a generation structure $G_I = \langle \mathcal{U}, \preceq \rangle$. Each $U \in U^*$ will be the greatest node of a linear prevalent model $W_U$. Formally, for each $U \in U^*$, we define $W_U = \langle K_U, \leq_U, D_U, J_U, v_U \rangle$ as follows. $K_U := \{ U' \in U^* \mid U' \preceq^* U \}$. $U' \leq_U U''$ iff $U' \preceq^* U''$. $D_U := D^*(U)$. $J_U := \langle J_U^{*1}, J_U^{*2} \rangle$. For all $U' \in K_U$, $E^{v_U(U')} := D^*(U')$, and $P^{v_U(U')} := P^{v^* (U')}$ for each $P \, (\neq E)$. (Therefore $E^{v_U(U')} = E^{v^* (U')}$ may fail.) Then $W_U$ satisfies the strictness condition and the finite verification condition, and $W_U \in \mathcal{W}_{\textup{P} \fin}$. 
We define $\mathcal{U} := \{ W_U \mid U \in U^* \}$; $\preceq \, \subseteq \mathcal{U}^2$ by $W_{U_1} \preceq W_{U_2}$ iff $U_1 \preceq^* U_2$. Then $G_I = \langle \mathcal{U}, \preceq \rangle \in \mathcal{G}$.

We note that for each $U \in U^*$, identical are (1) $U$'s domain $D^*(U)$ as a node in $I$, (2) the constant domain $D_U$ of $W_U$ and (3) $E$'s extension $E^{v_{G_I}(W_U, U)}$ at the node-in-generation $\langle W_U, U \rangle$. Therefore, for all $c \in \ClTerm[\mathcal{L}(D^*(U))] = \ClTerm[\mathcal{L}(D_U)]$, $W_U, U \Vvdash_{G_I} E(c)$. This implies that the forcing condition of $\exists$ at the maximum node of a generation is always intuitionistic: $W_U, U \Vvdash_{G_I} \exists x A$ iff there is a $d \in D_U$ such that $W_U, U \Vvdash_{G_I} A[\overline{d}/x]$. 

$G_I$ corresponds to $I$ in the following sense.
\begin{proposition}\label{proposition: J to G transformation}
  For all $U \in U^*$ and $A \in \Fm[\mathcal{L}^{-\neg E}(D^*(U))]$, $U \Vdash_I A$ iff $W_U, U \Vvdash_{G_I} A$.
\end{proposition}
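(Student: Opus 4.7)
The plan is induction on the structure of $A \in \Fm[\mathcal{L}^{-\neg E}(D^*(U))]$, with the statement taken as ``for all $U \in U^*$, $U \Vdash_I A$ iff $W_U, U \Vvdash_{G_I} A$''. Since $\neg$ is absent, only local quantification is at play, and the base case is immediate from the construction of $W_U$: because $v_{W_U}(U) = v^*(U)$ and $J_{W_U} = \langle J_U^{*1}, J_U^{*2} \rangle$, we have $W_U, U \Vvdash_{G_I} P(c)$ iff $\langle J_U^{*1}(c) \rangle \in P^{v^*(U)}$ iff $U \Vdash_I P(c)$ for atomic $P(c)$. The cases $B \land C$ and $B \lor C$ follow directly from the induction hypothesis.

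The remaining cases rest on two observations about $G_I$. (a) $K_{W_{U'}} = \{U'' \in U^* : U'' \preceq^* U'\}$, so any $k'' \in K_{W_{U'}}$ with $k'' \geq_{W_{U'}} U'$ satisfies $k'' = U'$; in other words, each generation $W_{U'}$ is a linear model whose maximum node is $U'$. (b) $E^{v_{G_I}(W_{U'}, U')} = D^*(U') = D_{W_{U'}}$, so $W_{U'}, U' \Vvdash_{G_I} E(\overline{d})$ holds for every $d \in D_{W_{U'}}$; $E$ is already saturated at the top of each generation.

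For $A = B \to C$, ($\implies$) assume $U \Vdash_I B \to C$ and suppose $W_{U'} \succeq W_U$ (so $U \preceq^* U'$), $k' \in K_{W_{U'}}$ with $k' \geq_{W_{U'}} U$, and $W_{U'}, k' \Vvdash_{G_I} B$. Persistence of $\Vvdash_{G_I}$ within $W_{U'}$ gives $W_{U'}, U' \Vvdash_{G_I} B$; the induction hypothesis at $U'$ yields $U' \Vdash_I B$, whence $U' \Vdash_I C$ since $U \preceq^* U'$, and a second application of the hypothesis yields $W_{U'}, U' \Vvdash_{G_I} C$; taking $k'' := U'$ finishes this direction. ($\impliedby$) Assume $W_U, U \Vvdash_{G_I} B \to C$ and $U \preceq^* U'$ with $U' \Vdash_I B$. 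The hypothesis gives $W_{U'}, U' \Vvdash_{G_I} B$, and applying the forcing condition with $W'' = W_{U'}$ and $k' = U'$ yields a $k'' \geq_{W_{U'}} U'$ with $W_{U'}, k'' \Vvdash_{G_I} C$; by observation (a), $k'' = U'$, so $W_{U'}, U' \Vvdash_{G_I} C$ and the hypothesis gives $U' \Vdash_I C$.

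The universal case is analogous, using (b) to supply the premise $W_{U''}, U'' \Vvdash_{G_I} E(\overline{d})$ where needed and (a) to collapse the outer witness in the $\to$-inner forcing onto $U''$; the existential case is immediate, since ($\implies$) $U \Vdash_I \exists x B$ supplies a $d \in D^*(U) = D_{W_U}$ with $U \Vdash_I B[\overline{d}/x]$, whence $W_U, U \Vvdash_{G_I} E(\overline{d}) \land B[\overline{d}/x]$ by (b) and the hypothesis, and ($\impliedby$) reverses this argument. The chief delicacy lies in the ($\implies$) direction of $\to$, where one must reconcile the intuitionistic quantification over $U' \succeq^* U$ with the generation quantification over pairs $\langle W_{U'}, k' \rangle$; observations (a) and (b), together with in-generation persistence, are precisely what reduce the latter to the top node $U'$, where the induction hypothesis applies cleanly.
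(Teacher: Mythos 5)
Your proof is correct and follows essentially the same route as the paper's: induction on $A$ with the claim quantified over all $U$, using persistence within a generation together with the fact that $U'$ is the maximum of $K_{W_{U'}}$ to collapse the generation-forcing clauses for $\to$ and $\forall$ onto the top node, and the saturation $E^{v_{G_I}(W_{U'},U')}=D^*(U')$ to discharge the $E$-premises in the quantifier clauses. Your observations (a) and (b) are exactly the two facts the paper records just before the proposition and uses in its proof.
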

\begin{proof}
  Induction. 
  ($\to$) ($\implies$) Suppose $U \Vdash_I B \to C$, and $W_{U'}, U'' \Vvdash_{G_I} B$ with $U \preceq^* U'' \preceq^* U'$. Then $W_{U'}, U' \Vvdash_{G_I} B$ by persistence. So $U' \Vdash_I B$, and hence $U' \Vdash_I C$. Therefore $W_{U'}, U' \Vvdash_{G_I} C$. ($\impliedby$) Suppose $W_U, U \Vvdash_{G_I} B \to C$, $U \preceq^* U'$ and $U' \Vdash_I B$. Then $W_{U'}, U' \Vvdash_{G_I} B$. Since $U'$ is the maximum of $\langle K_{U'}, \leq_{U'} \rangle$, $W_{U'}, U' \Vvdash_{G_I} C$ by the supposition.
  
  ($\forall$) ($\implies$) Suppose $U \Vdash_I \forall x B$, $U \preceq^* U'$ and $d \in D_{U'}$. Then $U' \Vdash_I B[\overline{d}/x]$. So $W_{U'}, U' \Vvdash_{G_I} B[\overline{d}/x]$, implying $W_{U'}, U \Vvdash_{G_I} \top \to B[\overline{d}/x]$. ($\impliedby$) Suppose $W_{U}, U \Vvdash_{G_I} \forall x B$, $U \preceq^* U'$ and $d \in D^*(U')$. Since $W_{U'}, U \Vvdash_{G_I} \top \to B[\overline{d}/x]$, and $U'$ is the maximum of $\langle K_{U'}, \leq_{U'} \rangle$, we have $W_{U'}, U' \Vvdash_{G_I} B[\overline{d}/x]$, implying $U' \Vdash_I B[\overline{d}/x]$.
\end{proof}

Now we provide an estimation of the validity in all the generation structures. By $Th(\mathcal{G})^{-\neg}$, we mean the theory of $\mathcal{G}$ in $\mathcal{L}^{- \neg}$, i.e., $\{ A \in \ClForm[\mathcal{L}^{-\neg}] \mid \Vvdash_{\mathcal{G}}^V A \}$; also we let $Th(I_\mathcal{G})^{-\neg} = \{ A \in \ClForm[\mathcal{L}^{-\neg}] \mid \forall G \in \mathcal{G} (\Vdash_{I_G}^V A) \}$. Similarly, by $Th(\mathcal{G})^{-\neg E}$ and $Th(I_\mathcal{G})^{-\neg E}$, we mean $Th(\mathcal{G})^{-\neg}$ and $Th(I_\mathcal{G})^{-\neg}$ with $\mathcal{L}^{-\neg E}$ in place of $\mathcal{L}^{-\neg}$, respectively. In what follows, we appeal to the disjunction property and the existence property of $\IQC$\footnote{
See e.g. \cite[pp.266-7]{vanDalen1986}.}.
\begin{proposition}\label{proposition: T Th G and IQC}
  $[\IQC]^\star \subsetneq Th(\mathcal{G})^{-\neg E} \subsetneq Th(I_\mathcal{G})^{-\neg E} = \IQC$. 
\end{proposition}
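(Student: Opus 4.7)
I will split the chain into three parts and argue each in turn, using propositions \ref{proposition: G to I transformation} and \ref{proposition: J to G transformation} together with the prevalence properties of each generation.

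For $Th(I_\mathcal{G})^{-\neg E} = \IQC$: the inclusion $\IQC \subseteq Th(I_\mathcal{G})^{-\neg E}$ is immediate, since every $I_G$ lies in $\mathcal{I}$ by the remarks preceding proposition \ref{proposition: G to I transformation}. For the converse, contrapose: given a closed $A \in \ClForm[\mathcal{L}^{-\neg E}]$ with $A \notin \IQC$, pick a countable tree-like $I \in \mathcal{I}$ with every node at finite depth that refutes $A$ at its root $R_I$ (zeroing out the predicates not occurring in $A$ secures the finite verification condition), and build $G_I$ as in the paragraph before proposition \ref{proposition: J to G transformation}. Because $K_{W_{R_I}} = \{R_I\}$, proposition \ref{proposition: G to I transformation} reduces $W_{R_I} \Vdash_{I_{G_I}} A$ to $W_{R_I}, R_I \Vvdash_{G_I} A$, and proposition \ref{proposition: J to G transformation} reduces the latter to $R_I \Vdash_I A$, which fails.

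For $Th(\mathcal{G})^{-\neg E} \subsetneq Th(I_\mathcal{G})^{-\neg E}$: the inclusion follows from proposition \ref{proposition: G to I transformation}, since $\Vvdash_\mathcal{G}^V A$ implies $W, k \Vvdash_G A$ for every $\langle W, k \rangle \in K_G$, hence $W \Vdash_{I_G} A$ for every $W \in \mathcal{U}$ and every $G$. Strictness is witnessed by $\exists x(P(x) \to P(x)) \in \IQC$, which fails in the singleton generation structure $\langle \{W\}, \{\langle W, W\rangle\}\rangle \in \mathcal{G}$ for any preconstructive $W \in \mathcal{W}_{\textup{P}2\emptyset}$, since no $E(\overline{d})$ is forced at its root $\langle W, r_W \rangle$. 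For $[\IQC]^\star \subsetneq Th(\mathcal{G})^{-\neg E}$, I plan to prove by induction on $A \in \Fm[\mathcal{L}^{-\neg E}]$ the key lemma: whenever $A$ is forced at $\langle W, k \rangle$ under the intuitionistic forcing on the companion Kripke model with frame $\langle K_G, \leq_G \rangle$, domain $D_W$ at each $\langle W, k \rangle$, and atomic valuation $P^{v_W(k)}$, then $W, k \Vvdash_G A^\star$. Combined with intuitionistic soundness of $\IQC$ on this companion, this secures the inclusion. The induction leans on object and formula prevalence of each generation $W \in \mathcal{U}$ together with persistence of $\Vvdash_G$, so that the $\wneg\wneg$ prefix that $^\star$ places before every existential absorbs the time-gap built into strict finitistic $\to$ and $\forall$. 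Strictness is witnessed by $\exists x P(x) \to \exists x P(x)$: trivially in $Th(\mathcal{G})^{-\neg E}$, but not in $[\IQC]^\star$, since every existential subformula of a $^\star$-translate must sit under a $\wneg\wneg$ prefix.

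The main obstacle is the inductive $\to$-case of the key lemma: the natural biconditional ``$A$ intuitionistically forced at $\langle W, k \rangle$ iff $W, k \Vvdash_G A^\star$'' breaks in the $(\Leftarrow)$ direction at implication, because the strict finitistic time-gap admits late witnesses in $K_{W'}$ that persistence of the intuitionistic forcing cannot retract. I plan to prove only the $(\Rightarrow)$ direction, routing the $\to$-step so that, given an intuitionistic hypothesis for $B \to C$ at $\langle W, k \rangle$ and $W', k' \Vvdash_G B^\star$ with $\langle W', k' \rangle \geq_G \langle W, k \rangle$, the required later $k'' \geq k'$ in $K_{W'}$ supporting $C^\star$ is produced by first extracting an $E(\overline{d})$-witness via object prevalence of $W'$ and then relocating the $C^\star$-witness via formula prevalence, in the spirit of the auxiliary argument in the proof of proposition \ref{proposition: HTQ and NSFp}(i).
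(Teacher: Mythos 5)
Your treatment of the second and third links in the chain is essentially the paper's own: $Th(I_\mathcal{G})^{-\neg E} = \IQC$ via the composite of propositions \ref{proposition: J to G transformation} and \ref{proposition: G to I transformation} applied to $G_I$ (noting $K_{W_{R}} = \{R\}$ at the root), the middle inclusion from proposition \ref{proposition: G to I transformation}, and both strictness witnesses work (the paper uses $\exists x (P(x) \to P(x)) \lor (A \to A)$ where you use $\exists x P(x) \to \exists x P(x)$; either is fine since no $^\star$-image has an existential in those positions). The gap is in the first inclusion $[\IQC]^\star \subseteq Th(\mathcal{G})^{-\neg E}$: your key lemma is \emph{false}, not merely delicate in the $\to$-case. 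Take $0$-ary $P, Q, R$, a one-element domain with $E(\overline{d})$ forced everywhere, and $\mathcal{U} = \{W_0, W_1\}$ with $W_0 \preceq W_1$, where $W_0$ is the single node $a_0$ with $P, Q, R$ all false, and $W_1$ is the chain $a_0 <_{W_1} a_1 <_{W_1} b_1$ with $P$ true at $a_1, b_1$ and $Q, R$ true only at $b_1$. Both generations are finite chains, hence lie in $\mathcal{W}_{\textup{P}\fin}$, and $G = \langle \mathcal{U}, \preceq \rangle \in \mathcal{G}$. Your companion model is the four-element chain $\langle W_0, a_0\rangle <_G \langle W_1, a_0\rangle <_G \langle W_1, a_1\rangle <_G \langle W_1, b_1\rangle$; on it $P \to Q$ is intuitionistically forced only at the top node, where $R$ holds, so $(P \to Q) \to R$ is intuitionistically forced at the root $\langle W_0, a_0\rangle$. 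Yet $W_0, a_0 \Vvdash_G P \to Q$ (the only $P$-nodes are $a_1, b_1$ in $W_1$, and $Q$ follows at $b_1$), while $R$ fails at every node of $K_{W_0} = \{a_0\}$, so $W_0, a_0 \not\Vvdash_G (P\to Q) \to R = ((P\to Q)\to R)^\star$. Note that the induction hypotheses for the subformulas $P \to Q$ and $R$ do hold here, so the $\to$-step of your induction is not just unfinished but unprovable. The underlying obstruction is the one you half-identify: to discharge the intuitionistic hypothesis for $B \to C$ you must produce a companion node where $B$ is \emph{intuitionistically} forced, and generation-forcing of $B^\star$ never supplies one, because the fine-grained companion sees the internal time-structure of each generation that the strict finitistic conditional deliberately ignores. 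Object and formula prevalence can relocate a witness that already exists inside a generation; they cannot create one.

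The paper's proof avoids this by never asserting a pointwise semantic transfer for arbitrary formulas. It inducts on the \emph{theorem}: $\vdash_{\IQC} A$ implies $\Vvdash_{\mathcal{G}}^V A^\star$. The $\to$ and $\forall$ cases do not use the induction hypothesis at all: one instantiates intuitionistic soundness at the coarse companion $I_G$, obtains assertibility of $B \to C$ in the root generation via proposition \ref{proposition: G to I transformation}, and promotes that to validity by corollary \ref{corollary: Assertibility of implication in g-structure is validity} and lemma \ref{lemma: Lemma for the star operation}. The $\lor$ and $\exists$ cases are closed by the disjunction and existence properties of $\IQC$, which keep the induction within the class of theorems. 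You will need these metaproperties (or some equivalent device) — a purely model-theoretic transfer lemma on the fine companion is not available.
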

\begin{proof}
  (i) To prove $[\IQC]^\star \subseteq Th(\mathcal{G})^{-\neg E}$, establish by induction that for all $A \in \ClForm[\mathcal{L}^{-\neg E}]$, $\vdash_{\IQC} A$ implies $\Vvdash_{\mathcal{G}}^V A^\star$. The basis and ($\land$) are trivial. ($\lor$) By disjunction property of $\IQC$. ($\to$) does not depend on the induction hypothesis. Suppose $\Vdash_I^V B \to C$ for all $I \in \mathcal{I}$. Let $G = { \langle \mathcal{U}, \preceq \rangle } \in \mathcal{G}$. To prove $R_G, r_G \Vvdash_G (B \to C)^\star$, it suffices to show that $R_G, k \Vvdash_G B \to C$ for some $k \in K_{R_G}$, by lemma \ref
{lemma: Lemma for the star operation} and corollary \ref{corollary: Assertibility of implication in g-structure is validity} (i). Here we have $R_G \Vdash_{I_G} B \to C$ by the supposition. Apply proposition \ref{proposition: G to I transformation}. ($\forall$) Similar to ($\to$). ($\exists$) By existence property, $\vdash_{\IQC} B[t/x]$ for some $t \in \Term[\mathcal{L}]$. If $t$ is open, then $\vdash_{\IQC} \forall x B$, and the case reduces to ($\forall$). If $t$ is closed, then use the induction hypothesis and the object prevalence in the root node. 
The converse fails, since $\exists x (P(x) \to P(x)) \lor (A \to A) \in Th(\mathcal{G})^{-\neg E} \backslash [\IQC]^\star$.
  
  (ii) $Th(\mathcal{G})^{-\neg E} \subseteq Th(I_\mathcal{G})^{-\neg E}$ follows from proposition \ref{proposition: G to I transformation}. For the failure of the converse, consider $\exists x (P(x) \to P(x)) \in \IQC$. 
  
  (iii) To prove $Th(I_\mathcal{G})^{-\neg E} \subseteq \IQC$, let $H \in \mathcal{I}$, and $R$ be $H$'s root. Confirm that $\Vdash_H^V A$ iff $R \Vdash_H A$ iff $W_R, R \Vvdash_{G_H} A$ iff $W_R \Vdash_{I_{G_H}} A$. $A \in Th(I_\mathcal{G})^{-\neg E}$ implies the last. The converse is trivial.
\end{proof}

This estimation can be sharpened by a restriction. Indeed, there is a class of generation structures whose validity precisely coincides with $\IQC$. We say a $W \in \mathcal{W}$ is \textit{postconstructive} if $\models_{W}^V E(c)$ for all $c \in \ClTerm[\mathcal{L}]$, in the sense that all the objects prescribed by the default language have already been constructed at the beginning. We write $\mathcal{W}_{\textup{C}} \subseteq \mathcal{W}_{\textup{P} \fin}$ for the subclass of such models. We note that this is an `opposite' notion of the preconstructive models (cf. proposition \ref{proposition: Reduction of Wp to Wp2ets}).

We say a $G = \langle \mathcal{U}, \preceq \rangle \in \mathcal{G}$ is \textit{postconstructive} if $\mathcal{U} \subseteq \mathcal{W}_{\textup{C}}$. We write $\mathcal{G}_{\textup{C}}$ for the class of all postconstructive g-structures. Also, let $Th(\mathcal{G}_{\textup{C}})^{-\neg} = \{ A \in \ClForm[\mathcal{L}^{-\neg}] \mid \Vvdash_{\mathcal{G}_{\textup{C}}}^V A \}$; and similar for $Th(I_{\mathcal{G}_{\textup{C}}})^{-\neg}$, $Th(\mathcal{G}_{\textup{C}})^{-\neg E}$ and $Th(I_{\mathcal{G}_{\textup{C}}})^{-\neg E}$. We will show that by restricting to $\mathcal{G}_\textup{C}$, proposition \ref{proposition: T Th G and IQC} is tightened up to $Th(\mathcal{G}_\textup{C})^{-\neg E} = \IQC$.

We note that $G_I \in \mathcal{G}_{\textup{C}}$ for all $I = \langle U^*, \preceq^*, D^*, J^*, v^* \rangle \in \mathcal{I}$, since by definition $J_R^{*1} [\ClTerm[\mathcal{L}]] \subseteq D^*(R) = E^{v_{G_I}(W_R, R)}$, where $R$ is $I$'s root (cf. right before proposition \ref{proposition: J to G transformation}). Also, let $\mathcal{I}^E \subseteq \mathcal{I}$ be the subclass with $\forall x E$ valid. Then, $\IQC + \forall x E$ is sound and complete with respect to $\mathcal{I}^E$, and $I_G \in \mathcal{I}^E$ for all $G \in \mathcal{G}$.
\begin{proposition}\label{corollary: Th GC is IQC}
  (i) $Th(\mathcal{G}_\textup{C})^{-\neg E} = \IQC$. (ii) $Th(\mathcal{G}_\textup{C})^{-\neg} = \IQC + \forall x E$.
\end{proposition}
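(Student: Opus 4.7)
I would prove both parts by the same scheme, lifting the correspondences of Propositions \ref{proposition: G to I transformation} and \ref{proposition: J to G transformation} and crucially using the postconstructivity of $\mathcal{G}_\textup{C}$. For the inclusions $Th(\mathcal{G}_\textup{C})^{-\neg E}\subseteq\IQC$ and $Th(\mathcal{G}_\textup{C})^{-\neg}\subseteq\IQC+\forall x E$, I would fix an arbitrary $H\in\mathcal{I}$ (resp.\ $\mathcal{I}^E$), consider $G_H\in\mathcal{G}_\textup{C}$ as indicated just before the proposition, read off $W_U, U\Vvdash_{G_H} A$ for every $U\in U^*$ from the hypothesised validity, and translate via Proposition \ref{proposition: J to G transformation} to $U\Vdash_H A$, whence $\Vdash_H^V A$. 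For part (ii) this requires first extending Proposition \ref{proposition: J to G transformation} to $E$-atoms when $H\in\mathcal{I}^E$; this is straightforward because $\Vdash_H^V\forall x E$ forces $E^{v^*(U)}=D^*(U)=E^{v_{G_H}(W_U,U)}$ uniformly.

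For the converse inclusions, I would fix $G\in\mathcal{G}_\textup{C}$, use that $I_G\in\mathcal{I}$ (and $\mathcal{I}^E$ for (ii), as recorded in the excerpt) so $A\in\IQC$ (resp.\ $\IQC+\forall x E$) yields $\Vdash_{I_G}^V A$, and induct on the structure of the closed formula $A$. The $\land$ case is immediate, $\lor$ applies $\IQC$'s disjunction property, and the $E(c)$-atom case needed for (ii) is immediate from postconstructivity. For $A=\exists x B$, $\IQC$'s existence property provides a closed term $t\in\ClTerm[\mathcal{L}^{-\neg E}]$ with $\vdash_{\IQC} B[t/x]$ (the witness can be chosen closed since $\mathcal{L}$ has constants); the inductive hypothesis on $B[t/x]$ yields $\Vvdash_G^V B[t/x]$, and postconstructivity of $R_G$ supplies $R_G, r_G\Vvdash_G E(t)$, satisfying the local $\exists$-clause at the root.

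The cases $A=B\to C$ and $A=\forall x B$ would not recurse on proper subformulas but combine $\Vdash_{I_G}^V A$ with prevalence-in-generation: given $W\succeq R_G$, $k\geq r_G$ with $W, k\Vvdash_G B$ (or $W, k\Vvdash_G E(\overline{d})$ in the $\forall$-case), Proposition \ref{proposition: G to I transformation} first furnishes $W\Vdash_{I_G} B$ (resp.\ $B[\overline{d}/x]$), the validity of $A$ in $I_G$ then gives $W\Vdash_{I_G} C$ (resp.\ $B[\overline{d}/x]$), another application of Proposition \ref{proposition: G to I transformation} supplies some $k_1$ with $W, k_1\Vvdash_G C$ (resp.\ $B[\overline{d}/x]$), and prevalence upgrades this to some $k''\geq k$ to witness the forcing clause.

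The main anticipated obstacle is the $\exists$ case: it depends on $\IQC$'s existence property producing a witness from the \emph{default} language $\mathcal{L}$, the only class of terms for which postconstructivity guarantees $E$-forcing at the root of $R_G$. One must also confirm that the inductive measure on connective-quantifier complexity strictly decreases under substitution $B\leadsto B[t/x]$ to keep the induction well-founded.
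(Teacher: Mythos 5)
Your proposal is correct and follows essentially the same route as the paper, which proves both parts via the chain $\IQC\,(+\forall xE) \subseteq Th(\mathcal{G}_\textup{C})^{-\neg(E)} \subseteq Th(I_{\mathcal{G}_\textup{C}})^{-\neg(E)} \subseteq \IQC\,(+\forall xE)$ "similarly to" Proposition \ref{proposition: T Th G and IQC}: the same induction with the disjunction and existence properties (the paper likewise notes these hold for $\IQC+\forall xE$ because $\forall xE$ is Harrop), the same non-recursive treatment of $\to$ and $\forall$ via Propositions \ref{proposition: G to I transformation} and prevalence-in-generation, the same use of postconstructivity to discharge the $E(t)$ requirement in the $\exists$-clause (which is exactly what removes the need for the $^\star$-translation), and the same passage through $G_H$ and Proposition \ref{proposition: J to G transformation} for the reverse inclusion. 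You have in fact spelled out two details the paper leaves implicit --- the extension of Proposition \ref{proposition: J to G transformation} to $E$-atoms for part (ii), and the choice of a closed existential witness (the paper instead reduces an open witness to the $\forall$-case) --- both of which are handled soundly.
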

\begin{proof}
  (i) Prove $\IQC \subseteq Th(\mathcal{G}_\textup{C})^{-\neg E} \subseteq Th(I_{\mathcal{G}_\textup{C}})^{-\neg E} \subseteq \IQC$ similarly to proposition \ref{proposition: T Th G and IQC}. 
  (ii) It holds that $\IQC + \forall x E \subseteq Th(\mathcal{G}_\textup{C})^{-\neg} \subseteq Th(I_{\mathcal{G}_\textup{C}})^{-\neg} \subseteq \IQC + \forall x E$. To prove the first inclusion, use that $\IQC + \forall x E$ has the disjunction property and the existence property, since $\forall x E$ is a Harrop formula. 
\end{proof}

\section{Ending remarks}\label{section: Ending remarks: Further topic}
In this article, we presented a classical rendition of Wright's strict finitistic notions, and a reconstruction of first-order logic of his strict finitism. Section \ref{section: Methods: Our rendition of Wright's semantic} explained Wright's original semantic system and the notions central to it, and explicitly stated, together with our conceptual assumptions, how we render them in order to reconstruct the system. In the first half (section \ref{section: Strict finitistic first-order logic}) of the mathematical part, we treated the logic in the general setting, and provided a Kripke semantics and a proof system that are sound and complete. 
In the second half (section \ref{section: Theory of prevalence}), we investigated the logic in the prevalent setting. To us, the formalisation theorems (\cite[proposition 2.32]{Yamada2023} and our proposition \ref{corollary: Th GC is IQC}) of the bridging principle (\ref{principle: Conceptual identity}) are of special import, since they appear to formally show in what sense the notion of verifiability in principle is an `extension' of that of verifiability in practice, under our rendition of Wright's notions. If the rendition is correct, the theorems may indicate that Wright's sketch of the semantics aptly grasped what verifiability in practice is, and that Yamada \cite{Yamada2023}'s and our classical reconstruction are a step in the right direction towards formalising strict finitism. 

The key concept there was prevalence. All thanks to it, the histories represented by a strict finitistic model become all homogeneous. The assertibility becomes classical, and therefore we can view a partially ordered set of models as an intuitionistic model whose nodes consist of the sets of the assertible formulas (cf. proposition \ref{proposition: G to I transformation}). Yamada \cite[section 3]{Yamada2023} suspected that the prevalence was responsible for the correspondence between $\SF$ and $\IPC$. We saw that the prevalence does connect $\SFQ_{\textup{\textbf{P}}}^{- \neg}$ and $\IQC$ to some extent (proposition \ref{proposition: T Th G and IQC}). 
However, proposition \ref{corollary: Th GC is IQC} shows that for the two logics to match, the models considered need to be postconstructive, i.e., all the objects prescribed by the default language are given at the root node. Since the validity in $\SFQp$ is captured by the class $\mathcal{W}_{\textup{P}2 \emptyset}$ of the preconstructive models (cf. proposition \ref{proposition: Reduction of Wp to Wp2ets}), this formally corroborates that strict finitism pays attention not only to actual verification, but also to actual construction of objects, whereas intuitionism does neither.

The `time-compressing' nature of intuitionistic reasoning may, this way, be seen more clearly. Intuitionistic implication $A \to B$ is strict finitistic implication with the time-gap compressed. $B$ is forced at the same node as $A$, no matter how late it may come in practice: if $B$ will be brought about in principle, then it is as if it is verified already. Intuitionistic logic assumes that all the default objects are constructed at the beginning. Construction according to intuitionism appears to be as if, since all objects will be constructed in principle, we can assume they are already constructed.

We must leave one obvious further task untouched, i.e. the investigation of the formal system of strict finitistic arithmetic. If this article's logic formalises strict finitistic reasoning plausibly enough, then what system would come out when we add arithmetical axioms, or what axioms should we need in order to reflect the strict finitistic assumptions of numbers? Since the standpoint is meant to be a view of numbers first and foremost, it is imperative that these questions are answered. 


\section*{Acknowledgements}
\begin{itemize}
  \item Rosalie Iemhoff has my utmost thanks. Any of my mathematical works will eternally be indebted to her tutelage. I thank an anonymous referee for pointing out the lack of conceptual underpinnings; and Amirhossein Akbar Tabatabai for discussions on detailed matters. The contents of this article were previously presented at (i) the Workshop on Proofs and Formalization in Logic, Mathematics and Philosophy, (ii) the 4th International Workshop on Proof Theory and Its Applications, (iii) the Masterclass in the Philosophy of Mathematical Practices 2023 with Jean Paul Van Bendegem and (iv) the 5th International Workshop on Proof Theory and Its Applications. This study was conducted under the doctoral supervision by Professor Rosalie Iemhoff at Utrecht University.
\end{itemize}

\end{document}